\def\NAT@def@citea{\def\@citea{\NAT@separator}}
\theoremstyle{plain}
\newtheorem{theorem}{Theorem}[section]
\newtheorem{lemma}[theorem]{Lemma}
\newtheorem{corollary}[theorem]{Corollary}
\theoremstyle{definition}
\newtheorem{definition}[theorem]{Definition}
\theoremstyle{remark}
\newtheorem{remark}{Remark}
\begin{document}


\title{Lipschitz stability  estimate  and reconstruction  of Lam\'e  parameters in linear  elasticity}
\author{
\name{S. Eberle,\textsuperscript{a}
\thanks{CONTACT  S.  Eberle. Email: eberle@math.uni-frankfurt.de} 
B.  Harrach,\textsuperscript{b}
\thanks{CONTACT  B.  Harrach. Email: harrach@math.uni-frankfurt.de} 
H. Meftahi,\textsuperscript{c}
\thanks{CONTACT  H.  Meftahi. Email: houcine.meftahi@enit.utm.tn} and 
T.  Rezgui\textsuperscript{d}
\thanks{CONTACT  T.  Rezgui. Email: taher.rezgui@enit.utm.tn}
}
\affil{\textsuperscript{a,b}Institute for Mathematics, Goethe-University Frankfurt, 60325 Frankfurt am Main, Germany; 
\textsuperscript{c,d}ENIT of Tunisia, Tunis, 1002 Tunisia.}
}
\maketitle
\begin{abstract}
In this paper, we consider the  inverse problem   of recovering an isotropic  elastic tensor  from the Neumann-to-Dirichlet 
map. {\color{black} To this end, we prove a Lipschitz stability estimate  for Lam\'e  parameters with certain regularity assumptions. In addition, we assume that the Lam\' e parameters belong to a known finite subspace with a priori known  bounds and that they fulfill a monotonicity property.}
The proof relies on a monotonicity  result combined with the techniques of localized potentials.  
To numerically solve the inverse problem, we propose a Kohn-Vogelius-type cost functional over a class  of admissible  
parameters  subject to two boundary value problems. 
 {\color{black} The reformulation of the minimization  problem via the Neumann-to-Dirichlet operator
allows us to obtain the optimality conditions by using the Fr\'echet differentiability of this operator and its inverse}. 
The reconstruction is then performed by means of an iterative algorithm based on a  quasi-Newton method. Finally, 
we give and discuss several numerical examples. 
\end{abstract}
\begin{keywords}
Lipschitz stability,  monotonicity, localized potentials, Lam\'e parameters.
\end{keywords}
\section{Introduction}
In this paper,  we consider the inverse problem of recovering the elastic tensor $\mathbb{C}$ of a linear  isotropic elastic  body from the Neumann-to-Dirichlet 
operator $\Lambda(\mathbb{C})$. 
{\color{black}The main motivations of this problem are non-destructive testing for elastic bodies 
in order to detect and reconstruct material inclusions (as presented in \cite{Ammari} and the references therein), geophysical (see, e.g., \cite{Weglein}), and medical applications (as considered in \cite{Barbone_Gokhale}), in particular localization of potential tumors via a 
medical imaging modality called elastography.} Elastography is concerned with the reconstruction of the elastic properties in biological  
tissues and the present article aims at giving access to these features. 
{\color{black}For a topical review of inverse problems in elasticity and their applications
the reader is, e.g., referred to \cite{Bonnet}.}
\\
{\color{black}
The paper is split into two parts and gives a twofold perspective on determining Lam\'e parameters in linear elasticity. Part
one is on proving a Lipschitz stability
estimate based on the Neumann-to-Dirichlet map for
the corresponding tensors $\mathbb{C}_1$ and $\mathbb{C}_2$, depending on the Lam\'e parameters 
$(\lambda_1,\mu_1)$ and $(\lambda_2,\mu_2)$,
respectively. The estimate is given under the following a priori assumptions.
\begin{itemize}
\item[(i)] The Lam\'e parameters $(\lambda_j,\mu_j)$ for $j = 1, 2$ belong to a known finite dimensional subspace, with $\lambda$ being piecewise continuous and $\mu$ being Lipschitz continuous, and with fixed uniform lower and upper bounds.
\item[(ii)] The two pairs of parameters satisfy a monotonicity assumption that $(\lambda_1,\mu_1)$ is, as a pair, either a lower or an upper bound to $(\lambda_2,\mu_2)$.
\end{itemize}
\noindent
Part two deals with the reconstruction of the parameters $(\lambda,\mu)$ based on minimizing a Kohn-Vogelius type
functional.
Let us stress that this numerical part does not build on the theoretical results presented in Section 3 but rather approaches the problem
from a heuristic numerical side to demonstrate that useful numerical reconstructions are indeed possible. It remains a challenging open task 
how to unite the theoretical and numerical approaches in order to find rigorously justified reconstruction methods that work well in
practically relevant settings.}
\\
From the theoretical point of view, the inverse problem  of recovering $\mathbb{C}$ (or the Lam\'e moduli $\lambda, \mu$; cf. \eqref{tensor}) 
has been studied  by several authors. In the two dimensional case Ikehata \cite{ikehata1990inversion}  proves that the deflection $h$ between $(\lambda+h,\mu+h)$  and $(\lambda,\mu)$ can be uniquely determined by the first-order approximation of the Dirichlet-to-Neumann operator. Akamatsu,  Nakamura and Steinberg \cite{akamatsu1991identification}, give an inversion formula   for the normal derivatives at the boundary of the Lam\'e coefficients $\lambda, \mu\in C^\infty$ from the   Dirichlet-to-Neumann map. At the same time they present stability estimates for the boundary values of $\lambda, \mu$.
  Nakamura and Uhlmann \cite{nakamura1993identification} established that the Lam\'e coefficients are uniquely determined from the Dirichlet-to-Neumann operator,  assuming that they are sufficiently close to a pair of positive constants.  Imanuvilov and Yamamoto \cite{imanuvilov2011reconstruction} proved  that the Lam\'e coefficient  $\lambda$ can be recovered from partial Cauchy data if the coefficient $\mu$ is some positive constant. 
{\color{black} Boundary determination of Lam\'e coefficients can be found in the work by Lin and Nakamura
\cite{lin2017boundary}.}
 {\color{black} In addition, we want to mention a global uniqueness results in 2D by Imanuvilov and Yamamoto \cite{Imanuvilov2015}}.
 \\
  For the three dimensional case, Nakamura and Uhlmann  \cite{nakamura1995inverse, nakamura2003global} and Eskin and Ralston \cite{eskin2002inverse} proved uniqueness results  for both Lam\'e coefficients when   $\mu$ is assumed to be  close to a positive constant. The proofs in the above papers rely on the  construction of complex geometric optics solutions.  
For a partial data version, uniqueness for recovering piecewise constant Lam\'e parameters {\color{black} 
and a quantitative Lipschitz stability result} was proved in \cite{beretta2014lipschitz, beretta2014uniqueness}, and some boundary determination results
were shown in \cite{nakamura1999layer,nakamura1995inverse,lin2017boundary}. For fully anisotropic $\mathbb{C}$, 
uniqueness was proved {\color{black} by C$\hat{\mathrm a}$rstea, Honda and Nakamura \cite{carstea2018uniqueness}} for a piecewise homogeneous medium.
 Isakov,  Wang and  Yamamoto \cite{isakov2007inverse}, proved  
 H\"older and {\mbox Lipschitz} stability estimates of determining all coefficients of a dynamical Lam\'e  system with residual stress, including the density Lam\'e parameters,  
 and the residual stress, by three pairs of observations from the whole boundary or from a part of it.
 \\
  {\color{black}In this paper, we prove a Lipschitz stability result when the Lam\' e  coefficient   $\lambda$  is piecewise continuous, $\mu$ is Lipschitz and a monotonicity assumption holds.  
In more detail, we assume that the Lam\' e parameters belong to a known finite subspace with a priori known lower and upper bounds.}
 Our approach relies on the monotonicity
 of the Neumann-to-Dirichlet operator with respect to the elastic tensor and the techniques of localized potentials 
 {\color{black}\cite{harrach2018helmholtz,harrach2018localizing,harrach2012simultaneous,harrach2018fractional,
harrach2018fractional2,gebauer2008localized,harrach2009uniqueness}}.   For the numerical solution, we reformulate the inverse problem into a minimization problem using a Kohn-Vogelius
 type cost functional, and use a quasi-Newton method which employs the analytic gradient   of  the   cost function  and the approximation of the inverse Hessian is updated by  a BFGS (Broyden, Fletcher, Goldfarb, Shanno) scheme \cite{murea2005bfgs}.
\\
 Let us give some more remarks on the relation of this work to previous results.  
 {\color{black} Stability for inverse coefficient problems are derived in general from technically challenging approaches involving quantitative unique continuation estimates and the study of special solutions} \cite{bellassoued2008inverse,bellassoued2007lipschitz,nakamura1995inverse,nakamura2003global}.
 Our approach on proving a Lipschitz stability result  is relatively simple and easy to extend to other settings, and has already
led to new results on uniqueness and Lipschitz stability in electrical impedance tomography (EIT) with finitely many electrodes \cite{harrach2019uniqueness} as well as for the inverse Robin transmission problem
\cite{harrach2018global,harrach2020uniqueness} and on the stability in machine learning
reconstruction algorithms \cite{seo2018learning} under a definiteness assumption.
\\
 The paper is organized as follows. In Section \ref{sec2}, we introduce the forward as well as  the inverse problem
 and the Neumann-to-Dirichlet operator. In Section \ref{sec3}, we show a monotonicity result between the Lam\'e parameters 
 and the Neumann-to-Dirichlet operator and deduce the existence of localized potentials. Then, we prove the
 Lipschitz stability
   estimate {\color{black} for a finite dimensional subset with bounded Lam\'e parameters and a monotonicity 
   assumption. In Section \ref{sec4}, we present a numerical approach for solving the inverse problem and
   in Section \ref{sec5} we show some numerical results.}

\section{Problem formulation}\label{sec2}
Let $\Omega \subset \mathbb{R}^d$ $(d \geq 2)$, be a bounded and connected open set, occupied by an isotropic material with  linear stress-strain relation. The boundary $\partial \Omega$,   is assumed to be $C^{1,1}$ and  consists of two {non-empty} disjoint {open}
parts, the fixed "Dirichlet-boundary" $\Gamma_{\textup D}$ and the "Neumann-boundary" $\Gamma_{\textup N}$ {with}
\[
{\partial \Omega=\overline{\Gamma_{\textup N}}\cup \overline{\Gamma_{\textup D}}},\quad  \Gamma_{\textup N}\cap \Gamma_{\textup D}=\emptyset.
\]
{\color{black}The choice of mixed boundary conditions is based on the physical treatment of the elasticity problem.
The Neumann-to-Dirichlet operator with fixed Dirichlet part is an idealized model for fixing an elastic object in place on one part of the boundary, applying different pressure patterns
to the remaining part and measuring the resulting displacements.}
\\
\\
We denote a given surface load by $g\in L^{2}(\Gamma_{\textup N})^d$. Then the displacement vector 
$u: \overline{\Omega}\rightarrow \mathbb{R}^d$ satisfies the following boundary value problem
 \begin{equation}
\label{direct}
\left\{
\begin{aligned}
-\mathrm{div}(\mathbb{C}\hat \nabla u) & = 0&\quad &\text{ in }\Omega,\\
 (\mathbb{C}\hat\nabla u)\nu&= g&\quad &\text{ on  }\Gamma_{\textup N},\\
  u&= 0&\quad &\text{ on  }\Gamma_{\textup D},
\end{aligned}
\right.
\end{equation}
where  $\nu$ is the outer unit normal vector to  $\partial\Omega$, {\color{black} which is similar to the boundary value problem considered, e.g., in \cite{Ciarlet}.} The  linearized  strain  tensor $\hat \nabla u$  and the stress tensor  $\mathbb{C}\hat\nabla u$ are  given by
 \[
  \hat \nabla u =\frac{1}{2}\left(\nabla u+(\nabla u)^T\right), \quad\quad
  \mathbb{C}\hat \nabla u=\left(\sum_{k,l=1}^d \mathbb{C}_{ijkl}\frac{\partial u_k}{\partial x_l}\right)_{1\leq i,j\leq d},
 \] 
 {\color{black}
 where 
 \begin{align*}
 \nabla u(x)=
\left(\frac{\partial}{\partial x_1}u(x),\cdots,\frac{\partial}{\partial x_d}u(x)\right),\qquad
\mathrm{div}F(x)=
\begin{pmatrix}
\sum_{i=1}^d \frac{\partial}{\partial x_i}F_{i1}(x)\\
\vdots\\
\sum_{i=1}^d \frac{\partial}{\partial x_i}F_{id}(x)
\end{pmatrix}
 \end{align*}
 \noindent
 for $F:\Omega\mapsto \mathbb{R}^{d\times d}$.
 }
The isotropic elastic   tensor  is defined as 
\begin{equation}
\label{tensor}
\mathbb{C}_{ijkl}:= \lambda\delta_{ij}\delta_{kl}+\mu\left(\delta_{ik}\delta_{jl}+\delta_{il}\delta_{jk}\right),
\end{equation}
where  $\lambda,\mu$ are the Lam\'e  coefficients.
\\
\\
{\color{black}
Next, we state a unique continuation principle from \cite{lin2011quantitative}:

\begin{theorem}[\color{black}Weak Unique Continuation Principle]\label{UCP}
Let $\Omega$ be a connected open set in $\mathbb{R}^d$ for $d\geq 2$.
Let $\mu(x)\in C^{0,1}(\Omega)$
and $\lambda(x)\in L^{\infty}(\Omega)$ satisfy
\begin{align*}
&\mu(x)\geq \delta_0, \quad \lambda(x)+2 \mu(x)\geq \delta_0\quad \mathrm{a.\,e.}\,\,x\in{\color{black} \Omega},\\
&\Vert \mu\Vert_{C^{0,1}({\color{black} \Omega})}+\Vert\lambda\Vert_{L^{\infty}(\Omega)}\leq M_0,
\end{align*}
\noindent
with positive constants $\delta_0$, $M_0$, where we define $\Vert f\Vert_{C^{0,1}(\Omega)}=\Vert f\Vert_{L^{\infty}({\color{black} \Omega})}+\Vert\nabla f\Vert_{L^{\infty}(\Omega)}$. 
If $u\in H^1(\Omega)^d$ satisfies
\begin{align*}
-\mathrm{div}(\mathbb{C}\hat \nabla u) &= 0\quad \text{ in }\,\Omega,\\
u&=0\quad \,\,\text{in some ball}\,\, B\subset\Omega,
\end{align*}
\noindent
then $u=0$ in $\Omega$.
\end{theorem}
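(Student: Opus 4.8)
The plan is to prove a quantitative version of the statement — a \emph{three-ball inequality} — and then deduce the qualitative unique continuation by propagation of smallness. The whole argument rests on a Carleman estimate for a \emph{scalarized} reformulation of the Lam\'e system, so the first task is a reduction that turns the coupled operator $-\mathrm{div}(\mathbb{C}\hat\nabla u)$ into a system with the Laplacian as its (block-)diagonal principal symbol and with only $L^\infty$ lower-order coefficients.

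First I would rewrite the equation in non-divergence form. Expanding $\mathrm{div}(\lambda(\mathrm{div}\,u)I + 2\mu\hat\nabla u)=0$ gives
\begin{equation*}
\mu\Delta u + (\lambda+\mu)\nabla(\mathrm{div}\,u) + (\mathrm{div}\,u)\nabla\lambda + 2(\hat\nabla u)\nabla\mu = 0,
\end{equation*}
whose principal part $\mu\Delta u + (\lambda+\mu)\nabla(\mathrm{div}\,u)$ is strongly elliptic exactly under $\mu\geq\delta_0$ and $\lambda+2\mu\geq\delta_0$. The obstruction is the term $(\mathrm{div}\,u)\nabla\lambda$, since $\lambda$ is only bounded. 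The device is to absorb it: setting $p=\mathrm{div}\,u$ and $q=\lambda p$, one has $(\lambda+\mu)\nabla p + p\nabla\lambda = \mu\nabla p + \nabla q$, so that
\begin{equation*}
\Delta u = -\nabla p - \tfrac{1}{\mu}\nabla q - \tfrac{2}{\mu}(\hat\nabla u)\nabla\mu .
\end{equation*}
Now no derivative of $\lambda$ appears, and the only differentiated coefficient is $\mu$, whose gradient lies in $L^\infty$ precisely because $\mu\in C^{0,1}(\Omega)$. Adjoining the defining relations for $p$ and $q$, this can be organized into a closed elliptic system for $U=(u,p,q)$ whose principal part is diagonal and whose coupling is first order with $L^\infty$ coefficients; this is the step where both regularity hypotheses ($\mu$ Lipschitz, $\lambda$ merely bounded) are used in an essential way.

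With a diagonal principal part in hand, I would prove a Carleman estimate for the Laplacian with a radial, suitably convexified weight $\varphi$, of the form
\begin{equation*}
\tau^3\|e^{\tau\varphi}v\|_{L^2}^2 + \tau\|e^{\tau\varphi}\nabla v\|_{L^2}^2 \leq C\,\|e^{\tau\varphi}\Delta v\|_{L^2}^2
\end{equation*}
for all large $\tau$ and all components $v$ of $U$ supported in a small ball. Since the weight is the same for every component, the estimate applies to the vector $U$ simultaneously, and the off-diagonal first-order terms $\nabla p$, $\tfrac1\mu\nabla q$, $\tfrac2\mu(\hat\nabla u)\nabla\mu$ are absorbed by the gradient term on the left once $\tau$ is taken large enough. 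A standard cutoff-and-optimize-in-$\tau$ argument then yields the three-ball inequality
\begin{equation*}
\|U\|_{L^2(B_{r_2})} \leq C\,\|U\|_{L^2(B_{r_1})}^{\theta}\,\|U\|_{L^2(B_{r_3})}^{1-\theta}, \qquad 0<\theta<1,
\end{equation*}
for concentric balls $B_{r_1}\subset B_{r_2}\subset B_{r_3}$ compactly contained in $\Omega$.

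Finally I would run the propagation-of-smallness argument. If $u\equiv 0$ on the ball $B$, then $U\equiv 0$ there, so the right-hand side of the three-ball inequality vanishes on a small ball inside $B$; applying it along a chain of overlapping balls joining $B$ to an arbitrary point of $\Omega$ — which exists by connectedness, via a compactness and covering argument — forces $U\equiv 0$, hence $u\equiv 0$, throughout $\Omega$. I expect the \emph{main obstacle} to be the reduction step: organizing the enlarged system so that the principal part is genuinely diagonal while \emph{no} derivative of $\lambda$ survives and every retained coefficient is only $L^\infty$, which is exactly the delicate point that forces $\mu$ to be Lipschitz. Establishing the Carleman estimate uniformly across all components and absorbing the full first-order coupling for large $\tau$ is the second, more computational, difficulty.
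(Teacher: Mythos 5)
First, note that the paper does not actually prove Theorem \ref{UCP}: its ``proof'' is a citation of Theorem 1.2 of \cite{lin2011quantitative}, so what you are attempting is a reconstruction of that reference's argument. Your overall architecture --- rewrite the Lam\'e system so that the principal part becomes the Laplacian, prove a Carleman estimate, deduce a three-ball inequality, and propagate smallness along a chain of balls --- is indeed the strategy used there, and your device $q=\lambda\,\mathrm{div}\,u$ for avoiding any derivative of $\lambda$ is the right one. However, the step you yourself flag as the main obstacle contains a genuine gap, and as described it would fail.

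The problem is the claim that ``adjoining the defining relations for $p$ and $q$'' yields a closed elliptic system with diagonal principal part. The relations $p=\mathrm{div}\,u$ and $q=\lambda p$ are pointwise constraints, not elliptic equations: in your system $U=(u,p,q)$ the components $p$ and $q$ satisfy no second-order equation at all, so there is no Carleman estimate to apply to them, and the absorption argument collapses. Concretely, the $u$-equation puts $\|e^{\tau\varphi}\nabla p\|^2$ and $\|e^{\tau\varphi}\nabla q\|^2$ with weight $O(1)$ on the right-hand side, and these can only be absorbed if they appear on the left of some estimate with a positive power of $\tau$ --- which would require elliptic equations for $p$ and $q$ that your system does not contain. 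The missing idea is to take the divergence of the Lam\'e system, $\mathrm{div}\,\mathrm{div}(2\mu\hat\nabla u)+\Delta(\lambda\,\mathrm{div}\,u)=0$, and substitute the equation for $\mu\Delta u$ back in; the second-order terms in $u$ then cancel only for the particular combination $s:=q+2\mu p=(\lambda+2\mu)\,\mathrm{div}\,u$, which satisfies
\begin{equation*}
\Delta\bigl[(\lambda+2\mu)\,\mathrm{div}\,u\bigr]
=2\,\mathrm{div}\bigl[(\mathrm{div}\,u)\nabla\mu-(\nabla u)^{T}\nabla\mu\bigr],
\end{equation*}
a Poisson equation whose right-hand side is the divergence of ($L^\infty$ coefficients) times $\nabla u$. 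Neither $p$ nor $q$ alone works: for instance $\Delta q=-2\,\mathrm{div}(\mu\nabla p)-2\,\mathrm{div}((\nabla u)^{T}\nabla\mu)$ couples to $p$ at second order. The system on which the Carleman machinery can actually run is therefore $(u,s)$, with the $u$-equation rewritten as $\Delta u+\nabla(\theta s)=A(x)\nabla u$, where $\theta:=(\lambda+\mu)/(\mu(\lambda+2\mu))\in L^\infty$; and since $\theta$ cannot be differentiated and the $s$-equation has a divergence-form source, you also need a Carleman estimate adapted to $H^{-1}$-type right-hand sides rather than the plain $L^2$-to-$L^2$ estimate you stated. With these repairs your outline becomes essentially the argument of \cite{lin2011quantitative}.
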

}
\noindent
{\color{black}
\begin{proof}
The reader is referred to Theorem 1.2 from \cite{lin2011quantitative}.
\end{proof}
}

\begin{corollary}[Unique Continuation Principle for Local Cauchy Data]\label{UCPC}
{\color{black}
 Let the same assumptions as in Theorem \ref{UCP} hold and let $\Omega$ have a $C^{1,1}$ boundary and let $\Gamma$
 be a nonempty open subset of $\partial\Omega$. If $u\in H^1(\Omega)^d$ satisfies
 \begin{align*}
 -\mathrm{div}(\mathbb{C}\hat \nabla u) &= 0\quad \text{ in }\,\Omega,\\
  (\mathbb{C}\hat\nabla u)\nu&= 0\quad \text{ on  }\,\Gamma,\\
   u&= 0\quad \text{ on  }\,\Gamma,
 \end{align*}
 \noindent
 then $u=0$ in $\Omega$.
 }
\end{corollary}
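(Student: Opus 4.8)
The plan is to reduce the statement to the weak unique continuation principle of Theorem~\ref{UCP} by extending $u$ across $\Gamma$ by zero and then invoking vanishing on a ball. Concretely, I fix a point $x_0\in\Gamma$ and, using that $\Gamma$ is open in $\partial\Omega$ and that $\partial\Omega$ is $C^{1,1}$, I choose a ball $B=B_r(x_0)$ so small that $B\cap\partial\Omega\subset\Gamma$ and $B$ lies on both sides of $\Gamma$. I set $\widetilde\Omega=\Omega\cup B$, which is open and connected (as the union of the connected set $\Omega$ and the connected ball $B$ overlapping it), and I extend $u$ to $\widetilde u$ on $\widetilde\Omega$ by $\widetilde u=u$ on $\Omega$ and $\widetilde u=0$ on the exterior piece $B\setminus\overline\Omega$. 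That exterior piece contains a ball on which $\widetilde u$ vanishes identically, which will play the role of the ball required by Theorem~\ref{UCP}. I also extend the Lam\'e parameters to $\widetilde\Omega$, taking a Lipschitz extension $\widetilde\mu$ of $\mu$ (for instance the McShane extension, which preserves both the Lipschitz constant and the lower bound $\widetilde\mu\geq\delta_0$) and, say, $\widetilde\lambda\equiv 0$ on $B\setminus\overline\Omega$, so that $\widetilde\mu\geq\delta_0$ and $\widetilde\lambda+2\widetilde\mu\geq\delta_0$ still hold and the norm bound by $M_0$ is retained.

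The key step is to verify that $\widetilde u$ is a genuine $H^1$ weak solution of $-\mathrm{div}(\widetilde{\mathbb{C}}\hat\nabla\widetilde u)=0$ on all of $\widetilde\Omega$, with no spurious contribution along the interface $\Gamma\cap B$. First, because $u=0$ on $\Gamma$, the traces of $u$ (from inside $\Omega$) and of $0$ (from outside) agree on $\Gamma\cap B$, so $\widetilde u\in H^1(\widetilde\Omega)^d$ by the standard gluing lemma for Sobolev functions across a Lipschitz interface. Second, for any test field $\psi\in C_c^\infty(\widetilde\Omega)^d$ the support of $\psi$ meets $\partial\Omega$ only inside $\Gamma$, so $\psi|_\Omega$ is an admissible test field for the Cauchy data problem; splitting the integral over $\widetilde\Omega$ into the parts over $\Omega$ and over $B\setminus\overline\Omega$ (where $\hat\nabla\widetilde u=0$) and using Green's formula together with $-\mathrm{div}(\mathbb{C}\hat\nabla u)=0$ in $\Omega$ and the vanishing traction $(\mathbb{C}\hat\nabla u)\nu=0$ on $\Gamma$, I obtain
\[
\int_{\widetilde\Omega}\widetilde{\mathbb{C}}\hat\nabla\widetilde u:\hat\nabla\psi\,dx=\int_\Omega\mathbb{C}\hat\nabla u:\hat\nabla\psi\,dx=0 .
\]
This is exactly the weak formulation, so $\widetilde u$ solves the elasticity system in $\widetilde\Omega$.

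Having produced a solution on the connected set $\widetilde\Omega$ that vanishes on a ball, I conclude $\widetilde u\equiv 0$ on $\widetilde\Omega$ by Theorem~\ref{UCP}, whence $u=\widetilde u|_\Omega=0$ in $\Omega$. The main obstacle is the key step above: one must use both homogeneous Cauchy conditions correctly — the condition $u=0$ on $\Gamma$ guarantees $H^1$-regularity of the zero extension (no jump across $\Gamma$), while the condition $(\mathbb{C}\hat\nabla u)\nu=0$ on $\Gamma$ guarantees that no single-layer (conormal-derivative) distribution is created on the interface. A secondary, easily handled technical point is to arrange the coefficient extension so that the hypotheses of Theorem~\ref{UCP} (the $C^{0,1}$-regularity of $\mu$, the ellipticity bounds, and the a priori norm bound) remain valid on the enlarged domain.
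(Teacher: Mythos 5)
Your proof is correct and follows essentially the same route as the paper, whose own proof is just a two-sentence sketch of this exact argument: extend $u$ by zero across $\Gamma$ into an enlarged open set and apply the weak UCP of Theorem~\ref{UCP} to the extension, which vanishes on a ball. Your write-up simply supplies the details the paper omits (the $H^1$ gluing via the vanishing Dirichlet trace, the absence of an interface layer via the vanishing traction, and the coefficient extension), all of which are handled correctly.
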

 
{\color{black}
\begin{proof}
A solution with vanishing Cauchy data in $\Omega$ can be extended by zero to a solution in
an extended open set
$\tilde{\Omega}$, where $\Omega\subsetneq \tilde{\Omega}$. Hence, the weak UCP (Theorem \ref{UCP}) can be applied to show that $u=0$ in $\Omega$.
\end{proof}
}

\noindent
Next, for  given constants $\alpha_1, \alpha_2, \beta_1, {\color{black}\beta_2}$ satisfying  $0<\alpha_1\leq\alpha_2$,  $0<\beta_1\leq\beta_2$, 
we define the  set of admissible elastic {\color{black}tensors} by
\[
\mathcal{A}=\left\{\mathbb{C}= \mathbb{C}(\lambda,\mu): \quad  (\lambda,\mu)\in L^\infty(\Omega)\times C^{0,1}(\Omega), \quad 
\alpha_1\leq \lambda \leq \alpha_2,  \quad \beta_1\leq \mu \leq \beta_2
\right\}.
\]
\noindent
Hence, the Lam\'e parameters of every $\mathbb{C}(\lambda,\mu)\in\mathcal{A}$ satisfy the conditions of {\color{black}the unique continuation principles Theorem
\ref{UCP} and Corollary \ref{UCPC}}.
\\
\\
\noindent
In what follows,  we denote  $A:B=\sum_{i,j=1}^d a_{ij}b_{ij}$, for matrices $A= (a_{ij})$  and   $B= (b_{ij})$.
\\
\\
The weak  formulation  of  problem \eqref{direct} is given by
\begin{equation}
\label{var-direct}
\int_{\Omega}\mathbb{C}\hat\nabla u:\hat\nabla v \,dx=\int_{\Gamma_{\textup N}}g \cdot v \,ds \quad \text{ for all } v\in \mathcal{V},
\end{equation}
where
\[
\mathcal{V}:=\left\{   v\in H^1(\Omega)^d: \quad  v_{|_{\Gamma_{\textup D}}}=0\right\}.
\]
\noindent
It is easy to see that for each $\mathbb{C}\in \mathcal{A}$, problem  \eqref{var-direct} has a unique solution $u\in \mathcal{V}$, which follows by the Lax-Milgram theorem and is shown, e.g., {\color{black}in \cite{Ciarlet_book}.}
\\
\\
We introduce the  Neumann-to-Dirichlet operator $\Lambda(\mathbb{C})$:
\[
\Lambda(\mathbb{C}): L^2(\Gamma_{\textup N})^d\rightarrow L^2(\Gamma_{\textup N})^d: \quad  g\mapsto {u_\mathbb{C}^g{|_{\Gamma_{\textup N}}},}
\]
{where here and in the following $u_\mathbb{C}^g\in \mathcal{V}$ always denotes the unique solution of (\ref{direct}) with surface load 
$g\in L^2(\Gamma_{\textup N})^d$ and elastic tensor $\mathbb{C}\in \mathcal{A}$.}

It is well known {(and an easy consequence from the variational formulation \eqref{var-direct} and the compactness of the trace operator)}
that $ \Lambda(\mathbb{C})$ is a self-adjoint compact linear operator {that fulfills}
\[
\langle g,\Lambda(\mathbb{C})h\rangle=\int_\Omega\mathbb{C}\hat\nabla u^g_\mathbb{C}:\hat\nabla u^h_\mathbb{C}\,dx \quad  \text{ for all } g,h\in L^2(\Gamma_{\textup N})^d,
\]
{where $\langle \cdot, \cdot \rangle$ denotes the $L^2(\Gamma_{\textup N})$-inner product.}

The  inverse problem we  consider here   is the following:
\begin{equation}
\label{invp}
\text{ \it  Find }  \mathbb{C} \text{ or } (\lambda,\mu) \text{ \it knowing}\,\, \langle g,\Lambda(\mathbb{C})h\rangle.  
 \end{equation}

 \section{Monotonicity, localized potentials  and Lipschitz stability}\label{sec3}
 In this section, we  show a monotonicity estimate between the elastic tensor and the Neumann-to-Dirichlet operator    and the  existence of  localized potentials. Then we deduce a Lipschitz stability estimate {\color{black} for a finite dimensional subset with bounded Lam\'e parameters and a definiteness assumption.}
\begin{lemma}[Monotonicity estimate]
\label{mono}
Let $\mathbb{C}_1, \mathbb{C}_2 \in \mathcal{A}$ {be elastic tensors, and} $g\in L^2(\Gamma_{\textup N})^d$ be an applied boundary load. {The corresponding solutions of (\ref{direct}) are denoted by $u_1:=u^g_{\mathbb{C}_1},\ u_2:=u^{g}_{\mathbb{C}_2}\in \mathcal{V}$}. Then
\begin{equation}
\label{eqmono}
\int_\Omega(\mathbb{C}_1-\mathbb{C}_2)\hat\nabla u_2:\hat\nabla u_2\,dx
\geq \langle g,\Lambda(\mathbb{C}_2)g\rangle-\langle g,\Lambda(\mathbb{C}_1)g\rangle
\geq \int_\Omega(\mathbb{C}_1-\mathbb{C}_2)\hat\nabla u_1:\hat\nabla u_1\,dx.
\end{equation}
\end{lemma}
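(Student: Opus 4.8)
The plan is to reduce everything to the symmetric bilinear form $B_{\mathbb{C}}(u,v):=\int_\Omega \mathbb{C}\hat\nabla u:\hat\nabla v\,dx$. This form is symmetric because the isotropic tensor \eqref{tensor} enjoys the major symmetry $\mathbb{C}_{ijkl}=\mathbb{C}_{klij}$, and it is nonnegative since $\mathbb{C}\hat\nabla w:\hat\nabla w=\lambda(\mathrm{tr}\,\hat\nabla w)^2+2\mu|\hat\nabla w|^2\geq 0$ whenever $(\lambda,\mu)$ is admissible. Using the identity $\langle g,\Lambda(\mathbb{C}_j)g\rangle=B_{\mathbb{C}_j}(u_j,u_j)$ recorded just before the lemma, the claim \eqref{eqmono} becomes a pair of inequalities between evaluations of $B$, and the right-hand side $\int_\Omega(\mathbb{C}_1-\mathbb{C}_2)\hat\nabla u_j:\hat\nabla u_j\,dx$ is simply $B_{\mathbb{C}_1}(u_j,u_j)-B_{\mathbb{C}_2}(u_j,u_j)$.

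The first step is to extract cross-relations by testing each weak formulation \eqref{var-direct} with the other solution. Since $u_1$ and $u_2$ are driven by the same load $g$, one has $B_{\mathbb{C}_1}(u_1,v)=B_{\mathbb{C}_2}(u_2,v)=\int_{\Gamma_{\textup N}}g\cdot v\,ds$ for every $v\in\mathcal{V}$. Choosing $v=u_2$ in the first relation gives $B_{\mathbb{C}_1}(u_1,u_2)=B_{\mathbb{C}_2}(u_2,u_2)$, while choosing $v=u_1$ in the second gives $B_{\mathbb{C}_2}(u_2,u_1)=B_{\mathbb{C}_1}(u_1,u_1)$; by symmetry of $B$ these are exactly the substitutions needed afterward.

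For the right-hand inequality I would expand the nonnegative quantity $B_{\mathbb{C}_2}(u_2-u_1,u_2-u_1)\geq 0$, insert $B_{\mathbb{C}_2}(u_2,u_1)=B_{\mathbb{C}_1}(u_1,u_1)$, and rearrange; the terms collapse to $\langle g,\Lambda(\mathbb{C}_2)g\rangle-\langle g,\Lambda(\mathbb{C}_1)g\rangle\geq B_{\mathbb{C}_1}(u_1,u_1)-B_{\mathbb{C}_2}(u_1,u_1)$, which is the desired lower bound. For the left-hand inequality I would symmetrically expand $B_{\mathbb{C}_1}(u_1-u_2,u_1-u_2)\geq 0$ and use $B_{\mathbb{C}_1}(u_1,u_2)=B_{\mathbb{C}_2}(u_2,u_2)$, obtaining $\int_\Omega(\mathbb{C}_1-\mathbb{C}_2)\hat\nabla u_2:\hat\nabla u_2\,dx\geq \langle g,\Lambda(\mathbb{C}_2)g\rangle-\langle g,\Lambda(\mathbb{C}_1)g\rangle$.

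The computation is essentially routine once the cross-relations are in hand; the only genuinely load-bearing point is the pointwise nonnegativity of the integrand of $B_{\mathbb{C}_j}$ evaluated on $u_1-u_2$, which is guaranteed by the bounds $\lambda\geq\alpha_1>0$ and $\mu\geq\beta_1>0$ built into $\mathcal{A}$. I expect the main thing to get right is the bookkeeping of which solution is tested against which bilinear form, since interchanging the roles of $\mathbb{C}_1$ and $\mathbb{C}_2$ (equivalently, expanding $B_{\mathbb{C}_2}$ versus $B_{\mathbb{C}_1}$ on the difference) is precisely what distinguishes the two inequalities.
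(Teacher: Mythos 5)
Your proposal is correct and follows essentially the same route as the paper: the cross-identity $B_{\mathbb{C}_1}(u_1,u_2)=\langle g,\Lambda(\mathbb{C}_2)g\rangle=B_{\mathbb{C}_2}(u_2,u_2)$ obtained from the variational formulation, followed by expanding the nonnegative quadratic term $B_{\mathbb{C}_1}(u_1-u_2,u_1-u_2)\geq 0$ for one inequality and the role-swapped expansion for the other (the paper phrases the latter as ``interchanging $\mathbb{C}_1$ and $\mathbb{C}_2$,'' which is the same computation). Your explicit remarks on the symmetry of $\mathbb{C}$ and the pointwise nonnegativity $\lambda(\mathrm{tr}\,\hat\nabla w)^2+2\mu|\hat\nabla w|^2\geq 0$ simply spell out what the paper uses implicitly.
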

\begin{proof}
{Since $\Lambda(\mathbb{C}_2)g=u_2|_{\Gamma_{\textup N}}$ we can use the variational formulation \eqref{var-direct} for $\mathbb{C}_1$ and $\mathbb{C}_2$ with
$v:=u_2$ and obtain}
\[
\int_\Omega\mathbb{C}_1\hat\nabla u_1:\hat\nabla u_2\,dx=\langle g,\Lambda(\mathbb{C}_2)g \rangle
=\int_\Omega\mathbb{C}_2\hat\nabla u_2:\hat\nabla u_2\,dx.
\]
Thus 
\[
\begin{aligned}
\lefteqn{\int_\Omega\mathbb{C}_1\hat\nabla (u_1-u_2):\hat\nabla(u_1-u_2)\,dx}\\
& =\int_\Omega\mathbb{C}_1\hat\nabla u_1:\hat\nabla u_1\,dx
+\int_\Omega\mathbb{C}_1\hat\nabla u_2:\hat\nabla u_2\,dx
-2\int_\Omega\mathbb{C}_1\hat\nabla u_1:\hat\nabla u_2\,dx\\
&=\langle g,\Lambda(\mathbb{C}_1)g\rangle-\langle g,\Lambda(\mathbb{C}_2)g\rangle
+\int_\Omega(\mathbb{C}_1-\mathbb{C}_2)\hat\nabla u_2:\hat\nabla u_2\,dx.
\end{aligned}
\]
Since the left-hand side is nonnegative, the first asserted inequality follows. 
\\
Interchanging $\mathbb{C}_1$ and $\mathbb{C}_2$, 
{\color{black} the second inequality follows.}
\end{proof}
\noindent
{\color{black} Based on the previous lemma and the definition of $\mathbb{C}$, we are led to the following monotonicity property.}
\begin{corollary}[Monotonicity]\label{monotonicity}
For $\mathbb{C}_1:=\mathbb{C}(\lambda_1,\mu_1),  \mathbb{C}_2:=\mathbb{C}(\lambda_2,\mu_2) \in \mathcal{A}$
\begin{equation}\label{monotonicity_corol}
\lambda_1\leq \lambda_2 \text{ and } \mu_1\leq \mu_2 \quad \text{  implies } \quad \Lambda(\mathbb{C}_1)\geq \Lambda(\mathbb{C}_2).
\end{equation}
\end{corollary}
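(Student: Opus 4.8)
The plan is to read the statement off directly from the monotonicity estimate of Lemma~\ref{mono}, combined with the explicit algebraic structure of the isotropic tensor \eqref{tensor}. Recall first that $\Lambda(\mathbb{C}_1)\geq\Lambda(\mathbb{C}_2)$ is understood in the sense of the usual (Loewner) ordering of the self-adjoint operators $\Lambda(\mathbb{C}_j)$, i.e.\ it means
\[
\langle g,\Lambda(\mathbb{C}_1)g\rangle\geq\langle g,\Lambda(\mathbb{C}_2)g\rangle \qquad\text{for every } g\in L^2(\Gamma_{\textup N})^d,
\]
or equivalently $\langle g,\Lambda(\mathbb{C}_2)g\rangle-\langle g,\Lambda(\mathbb{C}_1)g\rangle\leq 0$ for all such $g$.

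First I would invoke the left-hand inequality in \eqref{eqmono}, which with $u_2:=u^g_{\mathbb{C}_2}$ yields
\[
\langle g,\Lambda(\mathbb{C}_2)g\rangle-\langle g,\Lambda(\mathbb{C}_1)g\rangle\leq\int_\Omega(\mathbb{C}_1-\mathbb{C}_2)\hat\nabla u_2:\hat\nabla u_2\,dx .
\]
Thus it suffices to show that the integrand on the right is nonpositive almost everywhere. The key step is a pointwise computation: for any symmetric matrix $E$, a direct calculation from \eqref{tensor} using $E_{ij}=E_{ji}$ gives
\[
\mathbb{C}(\lambda,\mu)E:E=\lambda(\mathrm{tr}\,E)^2+2\mu(E:E).
\]
Since $(\lambda,\mu)\mapsto\mathbb{C}(\lambda,\mu)$ is linear we have $\mathbb{C}_1-\mathbb{C}_2=\mathbb{C}(\lambda_1-\lambda_2,\mu_1-\mu_2)$, and so, taking $E=\hat\nabla u_2(x)$,
\[
(\mathbb{C}_1-\mathbb{C}_2)\hat\nabla u_2:\hat\nabla u_2
=(\lambda_1-\lambda_2)(\mathrm{tr}\,\hat\nabla u_2)^2+2(\mu_1-\mu_2)(\hat\nabla u_2:\hat\nabla u_2).
\]
Both $(\mathrm{tr}\,\hat\nabla u_2)^2$ and $\hat\nabla u_2:\hat\nabla u_2$ are nonnegative, while the hypotheses $\lambda_1\leq\lambda_2$ and $\mu_1\leq\mu_2$ make the coefficients $\lambda_1-\lambda_2$ and $\mu_1-\mu_2$ nonpositive almost everywhere. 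Hence the integrand is $\leq 0$ pointwise, the integral is $\leq 0$, and the asserted ordering of the quadratic forms follows for every $g$.

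I do not expect a substantial obstacle here, as the result is an essentially immediate corollary of Lemma~\ref{mono}; the integrability of all terms is guaranteed by $u_2\in\mathcal{V}\subset H^1(\Omega)^d$ together with the boundedness of the coefficients on $\mathcal{A}$. The only points that require a little care are the sign bookkeeping --- in particular using the \emph{left} inequality of \eqref{eqmono}, which bounds the difference from above by an integral involving $u_2$, rather than the right one --- and the verification that the isotropic quadratic form decomposes into the two manifestly signed contributions displayed above.
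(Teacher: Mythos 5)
Your proposal is correct and matches the paper's (implicit) argument: the corollary is stated there as an immediate consequence of Lemma~\ref{mono} together with the decomposition $(\mathbb{C}_1-\mathbb{C}_2)\hat\nabla u:\hat\nabla u=(\lambda_1-\lambda_2)(\mathrm{div}\, u)^2+2(\mu_1-\mu_2)\,\hat\nabla u:\hat\nabla u$, which is exactly the pointwise identity you derive (note $\mathrm{tr}\,\hat\nabla u=\mathrm{div}\,u$) and which the paper itself uses again in the proof of Theorem~\ref{stability}. Your sign bookkeeping, including the choice of the left inequality in \eqref{eqmono}, is exactly right.
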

\begin{theorem}[Localized potentials]
\label{lemma:locpot}
Let $\mathbb{C}\in \mathcal{A}$  and {\color{black} $D_1, D_2$ be two  open sets with $\overline{D}_1, \overline{D}_2\Subset \Omega$}, 
 $\overline{D}_1\cap \overline{D}_2=\emptyset$ and let $\Omega\setminus(\overline{D}_1\cup \overline{D}_2)$ be connected. Then there exists a sequence
$(g_n)_{n\in \mathbb{N}}\subset L^2(\Gamma_{\textup N})^d$, such that the corresponding solutions
$(u^{ g_n})_{n\in \mathbb{N}}$ of \eqref{direct} fulfill 
\begin{align}
\label{localized_div_1}
&\lim_{n\to \infty}\int_{D_1} \left(\operatorname{div} u^{ g_n}\right)^2\,dx=\infty,\\ 
&\lim_{n\to \infty}\int_{D_2} \left( \operatorname{div}u^{ g_n}\right)^2\,dx=0, \label{localized_div_2}\\
&\lim_{n\to \infty}\int_{D_1} \hat \nabla u^{ g_n}:\hat \nabla u^{ g_n}\,dx=\infty,\label{localized_grad_1}\\
&\lim_{n\to \infty}\int_{D_2} \hat \nabla u^{ g_n}:\hat \nabla u^{ g_n}\,dx=0.\label{localized_grad_2}
\end{align}
\end{theorem}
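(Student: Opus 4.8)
The plan is to recast the four limits as a single localized--potentials statement for a pair of bounded linear operators and to prove it by the functional-analytic duality argument that is standard in this circle of ideas (see, e.g., \cite{gebauer2008localized,harrach2009uniqueness}). Introduce the measurement operators
\[
A_1\colon L^2(\Gamma_{\textup N})^d\to L^2(D_1),\quad A_1 g=\operatorname{div}u^{g}|_{D_1},
\]
\[
A_2\colon L^2(\Gamma_{\textup N})^d\to L^2(D_2)^{d\times d}_{\mathrm{sym}},\quad A_2 g=\hat\nabla u^{g}|_{D_2},
\]
both bounded by the well-posedness of \eqref{var-direct}. Using the pointwise bound $(\operatorname{div}u)^2=(I:\hat\nabla u)^2\le d\,(\hat\nabla u:\hat\nabla u)$, the four assertions \eqref{localized_div_1}--\eqref{localized_grad_2} all follow once one produces a sequence $(g_n)$ with $\|A_1 g_n\|^2\to\infty$ and $\|A_2 g_n\|^2\to0$: the blow-up of the $D_1$-divergence energy forces the blow-up of the $D_1$-strain energy, and the decay of the $D_2$-strain energy forces the decay of the $D_2$-divergence energy. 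By the usual duality lemma (Douglas' range inclusion theorem), such a sequence exists if and only if there is no constant $c$ with $\|A_1 g\|\le c\,\|A_2 g\|$ for all $g$, which in turn is equivalent to the range non-inclusion $\operatorname{Ran}(A_1^{*})\not\subseteq\operatorname{Ran}(A_2^{*})$; a suitable rescaling then upgrades the mere failure of the estimate to the simultaneous blow-up and decay.

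Next I would compute the two adjoints. For $\phi\in L^2(D_1)$ let $w_\phi\in\mathcal V$ solve $\int_\Omega\mathbb{C}\hat\nabla w_\phi:\hat\nabla v\,dx=\int_{D_1}\phi\,\operatorname{div}v\,dx$ for all $v\in\mathcal V$, and for $\Psi\in L^2(D_2)^{d\times d}_{\mathrm{sym}}$ let $z_\Psi\in\mathcal V$ solve $\int_\Omega\mathbb{C}\hat\nabla z_\Psi:\hat\nabla v\,dx=\int_{D_2}\Psi:\hat\nabla v\,dx$. Testing these identities with $v=u^{g}$ and using \eqref{var-direct} together with the major symmetry of $\mathbb{C}$ yields $A_1^{*}\phi=w_\phi|_{\Gamma_{\textup N}}$ and $A_2^{*}\Psi=z_\Psi|_{\Gamma_{\textup N}}$. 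Note that the source terms are interior, so $w_\phi$ solves the homogeneous system with vanishing Neumann trace on $\Gamma_{\textup N}$ away from $\overline D_1$, and likewise $z_\Psi$ away from $\overline D_2$.

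The heart of the argument, and the step I expect to be the main obstacle, is the range non-inclusion. Arguing by contradiction, suppose $\operatorname{Ran}(A_1^{*})\subseteq\operatorname{Ran}(A_2^{*})$. Then for every $\phi$ there is a $\Psi$ with $w_\phi|_{\Gamma_{\textup N}}=z_\Psi|_{\Gamma_{\textup N}}$; since both fields also have vanishing Neumann trace on $\Gamma_{\textup N}$, the difference $\eta=w_\phi-z_\Psi$ has vanishing Cauchy data on $\Gamma_{\textup N}$ and solves the homogeneous system on the connected set $\Omega\setminus(\overline D_1\cup\overline D_2)$. The unique continuation principle for local Cauchy data (Corollary \ref{UCPC}, applied after extension by zero across $\Gamma_{\textup N}$, which is legitimate because $\overline D_1,\overline D_2\Subset\Omega$ keep $\partial\Omega$ of class $C^{1,1}$ near $\Gamma_{\textup N}$) gives $\eta=0$ there, i.e. $w_\phi=z_\Psi$ on $\Omega\setminus(\overline D_1\cup\overline D_2)$. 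Because $z_\Psi$ is a homogeneous solution throughout $\Omega\setminus\overline D_2$, the field $w_\phi-z_\Psi$ vanishes on a neighborhood of $\partial D_2$ and hence extends by zero across $\partial D_2$ to an element of $\mathcal V$ that is supported in $\overline D_1$ and solves the same boundary value problem as $w_\phi$; by uniqueness for \eqref{var-direct} this forces $w_\phi$ itself to be supported in $\overline D_1$, so $w_\phi=0$ on $\Omega\setminus\overline D_1$ with vanishing Cauchy data on $\partial D_1$.

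Finally I would turn this into a contradiction. The vanishing Neumann trace on $\partial D_1$ means the defining identity for $w_\phi$ persists for all test fields $v\in H^1(D_1)^d$, not only those vanishing on $\partial D_1$. Choosing $v=\psi$ to be any homogeneous solution in $D_1$ and integrating by parts (using $w_\phi|_{\partial D_1}=0$) gives $\int_{D_1}\phi\,\operatorname{div}\psi\,dx=0$. As this would hold for every $\phi\in L^2(D_1)$, every homogeneous solution $\psi$ in $D_1$ would satisfy $\operatorname{div}\psi\equiv0$. This is false: solving $-\operatorname{div}(\mathbb{C}\hat\nabla\psi)=0$ in $D_1$ with Dirichlet data $\psi=x$ on $\partial D_1$ yields a $\psi$ with $\int_{D_1}\operatorname{div}\psi\,dx=\int_{\partial D_1}x\cdot\nu\,ds=d\,|D_1|\neq0$, so $\operatorname{div}\psi\not\equiv0$. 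This contradiction establishes $\operatorname{Ran}(A_1^{*})\not\subseteq\operatorname{Ran}(A_2^{*})$ and completes the proof. The delicate points to verify carefully are the applicability of the unique continuation principle on the non-smooth set $\Omega\setminus(\overline D_1\cup\overline D_2)$ and the regularity needed to give meaning to the Cauchy data on $\partial D_1$.
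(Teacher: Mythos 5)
Your proposal follows the same overall architecture as the paper's proof: your operators $A_1,A_2$ are exactly the paper's dual operators $A_1',B_2'$, your adjoints $A_1^{*},A_2^{*}$ are the paper's virtual measurement operators $A_1,B_2$ defined via \eqref{dual_A} and \eqref{dual_B}, your reduction of the four limits to $\|A_1g_n\|\to\infty$ and $\|A_2g_n\|\to 0$ via $(\operatorname{div}u)^2\le d\,(\hat\nabla u:\hat\nabla u)$ is the paper's closing step, and the duality equivalence you invoke is precisely the cited result [\cite{gebauer2008localized}, Corollary 2.6]. Where you genuinely deviate is in proving the range non-inclusion. The paper establishes two separate facts: $\mathcal{R}(A_1)\cap\mathcal{R}(B_2)=\{0\}$ (by the same UCP-plus-gluing argument you use) and $\mathcal{R}(A_1)\neq\{0\}$, the latter by taking the source $F=\chi_O$ on an open ball $O$ with $\overline{O}\subseteq D_1$ and showing $A_1\chi_O=0$ is impossible. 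You instead assume the full inclusion and derive, for \emph{every} $\phi$, that $w_\phi$ is supported in $\overline{D}_1$ with vanishing Cauchy data on $\partial D_1$, then conclude that every homogeneous solution on $D_1$ would be divergence-free, contradicted by the Dirichlet solution with boundary data $x$. Both contradictions rest on the same elementary fact (a displacement field with prescribed boundary values need not have vanishing integral of its divergence), so this is a reorganization rather than a new idea, but it is a legitimate and arguably more direct one.

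The reorganization, however, creates the one real gap, and it is exactly the point you flag at the end. The theorem assumes only that $D_1$ is \emph{open}; no regularity of $\partial D_1$ is available. Your final step needs (i) that $w_\phi$ being supported in $\overline{D}_1$ implies $w_\phi|_{D_1}\in H_0^1(D_1)^d$, and (ii) that the variational identity for $w_\phi$ — which you know only against restrictions to $D_1$ of fields in $\mathcal{V}$ — may be tested against an arbitrary $\psi\in H^1(D_1)^d$. Both require $\partial D_1$ to have some regularity (a Lipschitz/extension property) and can fail, e.g., for slit domains; your appeal to $\int_{\partial D_1}x\cdot\nu\,ds=d\,|D_1|$ has the same issue, though that particular identity is rescued by writing $\psi=x+\psi_0$ with $\psi_0\in H_0^1(D_1)^d$. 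The paper avoids all of this by running its nontriviality argument on the ball $O$, where these tools are available, and your proof can be repaired the same way: restrict the contradiction hypothesis to sources $\phi$ supported in a ball $O\Subset D_1$, upgrade ``$w_\phi$ supported in $\overline{D}_1$'' to ``supported in $\overline{O}$'' by one further application of the weak UCP (Theorem \ref{UCP}) on $\Omega\setminus\overline{O}$, and run your final step on $O$. A smaller point: your claim that the zero-extension of $w_\phi-z_\Psi$ across $\partial D_2$ ``solves the same boundary value problem as $w_\phi$'' is true but not immediate — it requires checking that the $D_2$-supported sources cancel (a cutoff argument with a function equal to $1$ near $\overline{D}_2$ and $0$ near $\overline{D}_1$ does it); more simply, the paper's gluing $v:=w_\phi\chi_{D_2}+z_\Psi\chi_{\Omega\setminus\overline{D}_2}$ is a global homogeneous solution in $\mathcal{V}$, hence zero by uniqueness for \eqref{var-direct}, which yields the support statement at once.
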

\begin{proof}
{\color{black} This proof is based on the UCP for local Cauchy data (Corollary \ref{UCPC}).}
\noindent
First, we define the virtual  measurement operators 
\begin{itemize}
\item[(a)] $A_j$ ($j=1,2$) by
\begin{align*}
A_j :  {\color{black}L^2(D_j)}\rightarrow  L^2(\Gamma_{\textup N})^d, \quad  F\mapsto  v|_{\Gamma_{\textup N}},
\end{align*}
 where $v\in \mathcal{V}$ solves 
 \begin{align}\label{dual_A} 
 \int_\Omega\mathbb{C}\hat\nabla v:\hat\nabla w\,dx=\int_{D_j}  F \operatorname{div}w\,dx \quad \text{ for all } w\in \mathcal{V},
 \end{align}
 \item[(b)] $B_j$ ($j=1,2$) by
\begin{align*}
B_j : {\color{black} L^2(D_j)^{d \times d}}\rightarrow  L^2(\Gamma_{\textup N})^d, \quad G\mapsto  v|_{\Gamma_{\textup N}},
\end{align*}
 where $v\in \mathcal{V}$ solves 
 \begin{align}\label{dual_B}
 \int_\Omega\mathbb{C}\hat\nabla v:\hat\nabla w\,dx=\int_{D_j}  G :\hat{\nabla}w\,dx \quad \text{ for all } w\in \mathcal{V}.
 \end{align}
\end{itemize}
\noindent
First, we show that the dual operators
 \begin{align*}
&A'_j : L^2(\Gamma_{\textup N})^d  \rightarrow {\color{black}L^2(D_j)}, \quad  j=1,2,\\
&B'_j : L^2(\Gamma_{\textup N}) ^d \rightarrow  {\color{black} L^2(D_j)^{d \times d}}, \quad  j=1,2,
\end{align*}
are given  by  $A'_jg=\operatorname{div}(u)|_{D_j}$ and
$B'_j g=\hat{\nabla}u|_{D_j}$, where $u$  solves problem  \eqref{direct}.
\\
\begin{itemize}
\item[To (a):] Let {\color{black}$F\in L^2(\Omega)$}, $g\in L^2(\Gamma_{\textup N})^d$, {\color{black}$u$, $v\in \mathcal{V}$} solve (\ref{direct}) and (\ref{dual_A}), respectively. Then,
\begin{align*}
{\color{black} 
\int_{\Omega}F A^{\prime}_j g\,dx 
= \int_{\Gamma_{\textup{N}}}g\cdot A_j F\,ds
=\int_{\Omega}\mathbb{C}\hat{\nabla}v : \hat{\nabla} u\,dx
=\int_{D_j} F \mathrm{div}(u)\,dx.}
\end{align*}
\item[To (b):] Let {\color{black}$G\in L^2(\Omega)^{d\times d}$}, $g\in L^2(\Gamma_{\textup N})^d$, {\color{black}$u$, $v\in \mathcal{V}$} solve (\ref{direct}) and (\ref{dual_B}), respectively. Then,
\begin{align*}
{\color{black}
\int_{\Omega}G : B^{\prime}_j g \,dx
 = \int_{\Gamma_{\textup{N}}}g\cdot B_j G\,ds
 =\int_{\Omega}\mathbb{C}\hat{\nabla}v : \hat{\nabla} u\,dx
 =\int_{D_j}G: \hat{\nabla}u\,dx.}
\end{align*}
\end{itemize}
\noindent
{\color{black} Next, we will prove that} 
\begin{align}
{\color{black}
\mathcal{R}(A_1)\cap  \mathcal{R}(B_2)=\lbrace 0\rbrace \quad \mathrm{and}\quad \mathcal{R}(A_1)\neq \{0\} }.\label{range_A_1_B_2}
\end{align}
\\
Let  $\varphi \in \mathcal{R}(A_1)  \cap  \mathcal{R}(B_2)$.  Then there exist  $v_1, v_2
\in \mathcal{V}$  such  that $ v_1|_{\Gamma_{\textup N}}= v_2|_{\Gamma_{\textup N}} =\varphi$,
and 
\[
\int_\Omega\mathbb{C}\hat \nabla v_j:\hat \nabla w\,dx
=0
\]
for all $w\in \mathcal{V}$  with  $\text{supp}(w)\subset \overline{\Omega}\setminus\overline{D}_j$, $j=1,2$.  
\noindent
Hence,
 \[
\left\{
\begin{aligned}
\mathrm{div}(\mathbb{C}\hat \nabla v_1) & = 0&\quad &\text{ in }\Omega\setminus\overline{D}_1,\\
\mathrm{div}(\mathbb{C}\hat \nabla v_2) & = 0&\quad &\text{ in }\Omega\setminus\overline{D}_2,
\end{aligned}
\right.
\]
and $(\mathbb{C}\hat\nabla v_1)\nu|_{\Gamma_{\textup N}}=(\mathbb{C}\hat\nabla v_2)\nu|_{\Gamma_{\textup N}}=0$. 
{\color{black}The unique continuation principle for Cauchy data (Corollary \ref{UCPC})} yields that $v_1 = v_2$ in $\Omega\setminus(\overline{D}_1\cup\overline{D}_2)$. Hence  
{\color{black} $v:=  v_1\chi_{D_2}+v_2\chi_{\Omega\setminus \overline{D}_2}\in  \mathcal{V}$ } and  satisfies  
 \[
\left\{
\begin{aligned}
\mathrm{div}(\mathbb{C}\hat \nabla v) & = 0&\quad &\text{ in }\Omega,\\
(\mathbb{C}\hat \nabla v)\nu & = 0&\quad &\text{ on }\Gamma_{\textup N}.
\end{aligned}
\right.
\]
It follows  that $v=0$ and thus   $\varphi=v|_{\Gamma_{\textup N}}=0$, and consequently \mbox{$\mathcal{R}(A_1)  \cap  \mathcal{R}(B_2)=\left\{0\right\}$}.
\\
\\
{\color{black}
Next, we take a closer look at the operator $A_1:L^2(D_1)\to L^2(\Gamma_{\textup{N}})^d$, $F\mapsto v\vert_{\Gamma_{\textup{N}}}$.
Let $O$ be an open ball with $\overline{O}\subseteq D_1$ and let $v\in\mathcal{V}$ 
solve (\ref{dual_A}) for $F=\chi_O$. We will show that $v\vert_{\Gamma_{\textup{N}}}=A_1 \chi_O\neq 0$.
We argue by contradiction and assume $v\vert_{\Gamma_{\textup{N}}}=0$. 
From (\ref{dual_A}) we obtain that 
\begin{align*}
\int_{\Omega\setminus O} \mathbb{C} \hat{\nabla} v:\hat{\nabla} w \,dx= 0
\end{align*}
\noindent
for all $w\in \mathcal{V}$ with $\mathrm{supp}(w)\subseteq \overline{\Omega}\setminus\overline{O}$.
 Thus, $v$ fulfills
\begin{align*}
- \mathrm{div}( \mathbb{C}\hat{\nabla} v)=0 \,\,\,\textrm{in}\,\,\,\Omega\setminus \overline{O}
\,\,\,\mathrm{and}\,\,\, (\mathbb{C}\hat{\nabla}v)\nu=0 \,\,\, \mathrm{on}\,\,\, \Gamma_{\textup{N}}.
\end{align*}
\noindent
Now, we apply Corollary \ref{UCPC} on $\Omega\setminus \overline{O}$
which results in $v=0$ on $\Omega\setminus \overline{O}$, so that $v\vert_{O}\in H_0^1(O)^d$.
 Also, for all $w\in H_0^1(O)^d$, we obtain from (\ref{dual_A}) with $F=\chi_O$
\begin{align}\label{weak_zero}
\int_O \mathbb{C}\hat{\nabla} v :\hat{\nabla}w\, dx
=\int_{\Omega}\mathbb{C}\hat{\nabla}v:\hat{\nabla}\tilde{w}\,dx
=\int_O \mathrm{div}(w)\,dx
=\int_{\partial O}w\cdot \nu\,ds=0,
\end{align}
\noindent
where $\tilde{w}\in H_0^1(\Omega)^d$ is the zero extension of $w\in H_0^1(O)^d$. Since (\ref{weak_zero}) is uniquely solvable 
by the Lax-Milgram-Theorem it follows that $v\vert_O=0$, so that $v=0$ in all of $\Omega$ which contradicts (\ref{dual_A}) with $F=\chi_O$. 
Hence, $A_1\chi_O\neq 0$ which shows that $\mathcal{R}(A_1)\neq \{0\}$.
\\
This proves (\ref{range_A_1_B_2}) and this implies $\mathcal{R}(A_1)\not\subseteq \mathcal{R}(B_2)$.
Using [\cite{gebauer2008localized}, Corollary 2.6] it follows that there exists a sequence
$(g_n)_{n\in \mathbb{N}}\subset L^2(\Gamma_{\textup N})^d$:
\begin{align*}
\lim_{n\to \infty}\Vert A_1^\prime g_n\Vert^2_{L^2(\Omega)}=\lim_{n\to \infty}\int_{D_1} \left(\operatorname{div}u^{ g_n}\right)^2\,dx=\infty
\end{align*}
\noindent
and
\begin{align*}
\lim_{n\to \infty}\Vert B_2^\prime g_n\Vert^2_{L^2(\Omega)^{d\times d}}=\lim_{n\to \infty}\int_{D_2}  \hat{\nabla}u^{ g_n} : \hat{\nabla}u^{ g_n} \,dx=0,
\end{align*}
\noindent
i.e. (\ref{localized_div_1}) and (\ref{localized_grad_2}) hold.
Since
\begin{align*}
\mathrm{tr}\left( \hat{\nabla} u^{ g_n}\right)=\mathrm{div}u^{ g_n}
\end{align*}
\noindent
this also implicates (\ref{localized_div_2}) and (\ref{localized_grad_1}).
}
\end{proof}
\noindent
Next, we go over to the background of the Lipschitz stability and introduce the definition of piecewise continuous functions.
\begin{definition}
A function $f\in L^\infty(\Omega)$ is called piecewise continuous, if there exists a finite decomposition of $\Omega$ {into non-empty open subsets $\Omega_i\subseteq \Omega$, $i=1,...,n$, so that} $ \Omega \setminus \bigcup_{i=1}^n\Omega_i $ is a {Lebesgue} null set, $\Omega_i \cap \Omega_j=\emptyset$ ($i\neq j$), { and $f|_{\Omega_i}$ is continuous for all $i=1,...,n$.}
\end{definition}

{Inverse elliptic coefficient problems are known to be ill-posed and stability results can only be obtained under a-priori assumptions, cf.\ the works cited in the introduction. For our problem, we will prove a stability result under the assumption that the coefficients belong to an a-priori known finite-dimensional subspace (e.g., stemming from 
the parameter parametrization or a desired finite resolution), that upper and lower bounds are a-priori known, and that a definiteness condition holds.
More precisely, let} $\mathcal{F}$ be a finite dimensional {subspace} of $\check{C}(\Omega)\times C^{0,1}(\Omega)$, where $\check{C}(\Omega)$ is the space
of piecewise continuous functions. 
We consider four constants \mbox{$0<a\leq b$} and \mbox{$0< c \leq d$} {\color{black} which are the lower and upper bounds of the Lam\'e parameter} and define the sets
\[
\mathcal{F}_{[a,b]\times[c,d]}=\left\{ (\lambda,\mu)\in \mathcal{F}:  \quad  a\leq\lambda(x)\leq b,  \quad  c\leq\mu(x)\leq d \quad \text{ for all } x\in \Omega \right\},
\]
\[
\mathcal{E}= \left\{ \mathbb{C}(\lambda, \mu): \quad  (\lambda, \mu)\in  \mathcal{F}_{[a,b]\times[c,d]} \right\}.
\]
{\color{black} In the following main result of this paper, the domain $\Omega$, the finite-dimensional {subspace} $\mathcal{F}
$ and the bounds $0< a\leq b$ and $0< c \leq d$ are fixed, and the constant in the Lipschitz stability result
 will depend on them.}

\begin{theorem}[Lipschitz stability]
\label{stability}
{\color{black}  There exists a positive constant $C>0$ such that for all 
$\mathbb{C}_1:=\mathbb{C}(\lambda_1,\mu_1),\mathbb{C}_2:=\mathbb{C}(\lambda_2,\mu_2) \in\mathcal{E}$ with either
\begin{align*}
&(a)\quad  \lambda_1\leq \lambda_2 \,\,\textrm{and}\,\, \mu_1\leq \mu_2\quad \textrm{or}\\
&(b)\quad \lambda_1\geq \lambda_2 \,\,\textrm{and}\,\, \mu_1\geq \mu_2,
\end{align*}
\noindent
we have 
\begin{equation}
\label{stab-est}
d_\Omega(\mathbb{C}_1,\mathbb{C}_2):=\mathrm{max}\left\{ \Vert\lambda_1-\lambda_2\Vert_{L^\infty(\Omega)},  \Vert\mu_1-\mu_2\Vert_{L^\infty(\Omega)}\right\} 
\leq   C\| \Lambda(\mathbb{C}_1)-\Lambda(\mathbb{C}_2) \|_*.
\end{equation}
Here  $\Vert.\Vert_*$ is the natural norm of $\Vert.\Vert_{\mathcal L(L^2(\Gamma_{\textup N}))}$. 
}
\end{theorem}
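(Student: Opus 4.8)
The plan is to argue by contradiction, using a compactness argument in the finite-dimensional set $\mathcal{F}_{[a,b]\times[c,d]}$ together with the monotonicity estimate (Lemma~\ref{mono}) and the localized potentials (Theorem~\ref{lemma:locpot}). Since both sides of \eqref{stab-est} are symmetric in $\mathbb{C}_1,\mathbb{C}_2$, I may assume case~(a), i.e.\ $\lambda_1\le\lambda_2$ and $\mu_1\le\mu_2$. If the estimate failed for every constant, there would exist $\mathbb{C}_1^{(k)}=\mathbb{C}(\lambda_1^{(k)},\mu_1^{(k)})$, $\mathbb{C}_2^{(k)}=\mathbb{C}(\lambda_2^{(k)},\mu_2^{(k)})\in\mathcal{E}$ with $\lambda_1^{(k)}\le\lambda_2^{(k)}$, $\mu_1^{(k)}\le\mu_2^{(k)}$ and
\[
t_k:=d_\Omega(\mathbb{C}_1^{(k)},\mathbb{C}_2^{(k)})>k\,\big\|\Lambda(\mathbb{C}_1^{(k)})-\Lambda(\mathbb{C}_2^{(k)})\big\|_*>0 .
\]
Because $\mathcal{E}$ carries fixed bounds and is not scaling invariant, the right move is to normalize the parameter \emph{difference}: set $(\hat\lambda^{(k)},\hat\mu^{(k)}):=t_k^{-1}(\lambda_2^{(k)}-\lambda_1^{(k)},\mu_2^{(k)}-\mu_1^{(k)})\in\mathcal{F}$, so that $\hat\lambda^{(k)},\hat\mu^{(k)}\ge0$ and $\max\{\|\hat\lambda^{(k)}\|_{L^\infty(\Omega)},\|\hat\mu^{(k)}\|_{L^\infty(\Omega)}\}=1$. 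As $\mathcal{F}$ is finite-dimensional and the coefficients are uniformly bounded, I pass to a subsequence along which $\mathbb{C}_2^{(k)}\to\mathbb{C}_*\in\mathcal{E}$ and $(\hat\lambda^{(k)},\hat\mu^{(k)})\to(\hat\lambda,\hat\mu)\in\mathcal{F}$ with $\hat\lambda,\hat\mu\ge0$ and $\max\{\|\hat\lambda\|_{L^\infty(\Omega)},\|\hat\mu\|_{L^\infty(\Omega)}\}=1$; in particular $(\hat\lambda,\hat\mu)\neq0$, and $\mathbb{C}_*$ satisfies the hypotheses of Theorem~\ref{lemma:locpot}.

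Next I would pass to the limit in the monotonicity estimate. Since $\mathbb{C}(\lambda,\mu)$ is linear in $(\lambda,\mu)$ we have $\mathbb{C}_1^{(k)}-\mathbb{C}_2^{(k)}=-t_k\,\mathbb{C}(\hat\lambda^{(k)},\hat\mu^{(k)})$; dividing the left inequality of \eqref{eqmono} by $t_k>0$ gives, for every $g\in L^2(\Gamma_{\textup N})^d$,
\[
-\int_\Omega \mathbb{C}(\hat\lambda^{(k)},\hat\mu^{(k)})\hat\nabla u_2^{(k)}:\hat\nabla u_2^{(k)}\,dx
\ \ge\ t_k^{-1}\big(\langle g,\Lambda(\mathbb{C}_2^{(k)})g\rangle-\langle g,\Lambda(\mathbb{C}_1^{(k)})g\rangle\big),
\]
where $u_2^{(k)}:=u^{g}_{\mathbb{C}_2^{(k)}}$. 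The right-hand side is bounded in absolute value by $t_k^{-1}\|\Lambda(\mathbb{C}_1^{(k)})-\Lambda(\mathbb{C}_2^{(k)})\|_*\,\|g\|^2<k^{-1}\|g\|^2\to0$. By continuous dependence of the solution of \eqref{var-direct} on the tensor (uniform coercivity plus $\mathbb{C}_2^{(k)}\to\mathbb{C}_*$ in $L^\infty$), $u_2^{(k)}\to u^{g}_{\mathbb{C}_*}$ in $H^1(\Omega)^d$, while $\mathbb{C}(\hat\lambda^{(k)},\hat\mu^{(k)})\to\mathbb{C}(\hat\lambda,\hat\mu)$ in $L^\infty$. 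Passing to the limit and using that $\mathbb{C}(\hat\lambda,\hat\mu)$ is pointwise positive semidefinite (as $\hat\lambda,\hat\mu\ge0$) I obtain
\[
0\le\int_\Omega \mathbb{C}(\hat\lambda,\hat\mu)\hat\nabla u^{g}_{\mathbb{C}_*}:\hat\nabla u^{g}_{\mathbb{C}_*}\,dx\le0 ,
\]
so this integral vanishes for every $g\in L^2(\Gamma_{\textup N})^d$.

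Finally I would use the explicit structure of the form together with the localized potentials. From \eqref{tensor} one has $\mathbb{C}(\hat\lambda,\hat\mu)\hat\nabla u:\hat\nabla u=\hat\lambda(\operatorname{div}u)^2+2\hat\mu\,\hat\nabla u:\hat\nabla u$, and since both summands are nonnegative the vanishing of the integral forces, for every $g$,
\[
\hat\lambda\,(\operatorname{div}u^{g}_{\mathbb{C}_*})^2=0\quad\text{and}\quad \hat\mu\,\hat\nabla u^{g}_{\mathbb{C}_*}:\hat\nabla u^{g}_{\mathbb{C}_*}=0\qquad\text{a.e.\ in }\Omega .
\]
As $(\hat\lambda,\hat\mu)\neq0$ with $\hat\lambda\ge0$ piecewise continuous and $\hat\mu\ge0$ Lipschitz, there is an open ball $D_1\Subset\Omega$ on which $\hat\lambda\ge\varepsilon$ or $\hat\mu\ge\varepsilon$ for some $\varepsilon>0$. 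Picking a disjoint ball $D_2\Subset\Omega$ with $\Omega\setminus(\overline{D}_1\cup\overline{D}_2)$ connected and applying Theorem~\ref{lemma:locpot} to $\mathbb{C}_*$ produces loads $g_n$ with $\int_{D_1}(\operatorname{div}u^{g_n}_{\mathbb{C}_*})^2\,dx\to\infty$ (respectively $\int_{D_1}\hat\nabla u^{g_n}_{\mathbb{C}_*}:\hat\nabla u^{g_n}_{\mathbb{C}_*}\,dx\to\infty$), whereas the identities above force these integrals to vanish on $D_1$. This contradiction shows $(\hat\lambda,\hat\mu)=0$, against $\max\{\|\hat\lambda\|_{L^\infty(\Omega)},\|\hat\mu\|_{L^\infty(\Omega)}\}=1$, and hence the Lipschitz estimate holds.

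I expect the main obstacle to be the normalization and compactness step rather than the final contradiction: because the admissible set has fixed, non-rescalable bounds one cannot reduce to $d_\Omega=1$ directly, and one must instead normalize the parameter direction by $t_k$ and transport it through the monotonicity estimate, which requires the continuous dependence of the elasticity solution on its coefficients to justify the passage to the limit. Once the limiting identity for the nonzero, positive-semidefinite direction $(\hat\lambda,\hat\mu)$ is established, the localized potentials of Theorem~\ref{lemma:locpot} yield the contradiction almost immediately.
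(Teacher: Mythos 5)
Your proposal is correct, and it is built from the same three ingredients as the paper's proof: the monotonicity estimate of Lemma~\ref{mono}, compactness of the normalized coefficient data coming from the finite dimension of $\mathcal{F}$ together with the a priori bounds, and the localized potentials of Theorem~\ref{lemma:locpot}. The difference is purely in the logical packaging, but it is worth recording. The paper argues directly: after dividing the monotonicity estimate by $d_\Omega(\mathbb{C}_1,\mathbb{C}_2)$ it introduces the functional $\Psi\left(g,(\zeta_1,\zeta_2),(\kappa,\tau)\right)$, notes that the normalized differences and the base coefficients range over the compact set $\mathcal{C}\times\mathcal{F}_{[a,b]\times[c,d]}$, and proves that $\inf\sup\Psi>0$ by lower semicontinuity (so the infimum is attained) plus an application of Theorem~\ref{lemma:locpot} at each point of that compact set, which yields the explicit constant $C=(\inf\sup\Psi)^{-1}$. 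You run the sequential version: contradiction sequences, normalization of the difference by $t_k$, subsequence extraction, passage to the limit in the monotonicity estimate, and a single application of Theorem~\ref{lemma:locpot} at the limiting tensor $\mathbb{C}_*$. Each variant buys something modest. Your route needs only the elementary bound $\left|\langle g,(\Lambda(\mathbb{C}_1)-\Lambda(\mathbb{C}_2))g\rangle\right|\le\|\Lambda(\mathbb{C}_1)-\Lambda(\mathbb{C}_2)\|_*\,\|g\|^2$, whereas the paper must invoke self-adjointness together with the sign information from Corollary~\ref{monotonicity} to rewrite the operator norm as $\sup_{\|g\|=1}\langle g,(\Lambda(\mathbb{C}_2)-\Lambda(\mathbb{C}_1))g\rangle$ before bounding it from below; also, you invoke the localized potentials only once, at the limit point. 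The price is that your limit passage requires the continuous dependence $u^g_{\mathbb{C}_2^{(k)}}\to u^g_{\mathbb{C}_*}$ in $H^1(\Omega)^d$ under $L^\infty$-convergence of the coefficients (uniform coercivity via Korn's inequality plus the standard subtraction argument); you flag this correctly, and it is not an extra hypothesis in disguise, since the paper needs exactly the same fact, hidden in the unproved claim that $\Psi$ is continuous in $(\kappa,\tau)$. One detail to make airtight in your write-up: when $\|\hat\lambda\|_{L^\infty(\Omega)}=1$, the ball $D_1$ must be chosen inside one of the open pieces $\Omega_i$ on which $\hat\lambda$ is continuous and nearly attains its essential supremum (this is also how the paper produces its $D_1$); with that fixed, your final contradiction --- the limit identity forces $\int_{D_1}(\operatorname{div}u^{g_n}_{\mathbb{C}_*})^2\,dx=0$, respectively $\int_{D_1}\hat\nabla u^{g_n}_{\mathbb{C}_*}:\hat\nabla u^{g_n}_{\mathbb{C}_*}\,dx=0$, for all $n$, while Theorem~\ref{lemma:locpot} makes these quantities blow up --- is exactly the paper's positivity step read backwards.
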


\begin{proof}

It suffices to prove the theorem for assumption (b) since the other case follows from interchanging $(\lambda_1,\mu_1)$ and $(\lambda_2,\mu_2)$.
For the sake of brevity, we write  $\Vert\cdot \Vert$  for  $\Vert \cdot \Vert_{L^2(\Gamma_{\textup N})^d}$.
We start with the reformulation of the right-hand side of estimate (\ref{stab-est}).
Since $\Lambda(\mathbb{C}_1)$ and $\Lambda(\mathbb{C}_2)$ are self-adjoint,
 and assumption (b) implies $\Lambda(\mathbb{C}_2)\geq \Lambda(\mathbb{C}_1)$ by 
Corollary~\ref{monotonicity}, we have that
\begin{align*}
\lefteqn{\Vert\Lambda(\mathbb{C}_2)-\Lambda(\mathbb{C}_1)\Vert_*}\\
&= 
  \sup_{\Vert g\Vert=1} \left| \langle g, \left(\Lambda(\mathbb{C}_2)-\Lambda(\mathbb{C}_1)\right) g\rangle\right|
= \sup_{\Vert g\Vert=1} \langle g, \left(\Lambda(\mathbb{C}_2)-\Lambda(\mathbb{C}_1)\right) g\rangle.
\end{align*}

Next, we apply the second inequality in the monotonicity relation \eqref{eqmono} in Lemma \ref{mono} and thus obtain 
for all $g\in L^2(\Gamma_{\textup N})^d$
 \begin{align}\label{estim_1}
 \nonumber 
\lefteqn{\langle g,\left(\Lambda(\mathbb{C}_2)-\Lambda(\mathbb{C}_1)\right) g\rangle}\\
&\geq \int_{\Omega} (\mathbb{C}_1-\mathbb{C}_2)\hat\nabla u_{\mathbb{C}_1}^{g}:\hat\nabla u_{\mathbb{C}_1}^{g}\,dx,\\
&=\int_\Omega(\lambda_1-\lambda_2)\left(\mathrm{div} u^{g}_{(\lambda_1,\mu_1)}\right)^2\,dx
\nonumber \quad {} +2\int_\Omega(\mu_1-\mu_2)\hat\nabla u_{(\lambda_1,\mu_1)}^{g}:\hat\nabla u_{(\lambda_1,\mu_1)}^{g}\,dx
 \end{align}
where $u_{\mathbb{C}_1}^{g}=u_{(\lambda_1,\mu_1)}^{g}\in \mathcal{V}$ denotes the solution of \eqref{direct} with Neumann data $g$ and elastic tensor  $\mathbb{C}_1:=\mathbb{C}(\lambda_1,\mu_1)$.

The estimate (\ref{estim_1}) contains the linear differences $\lambda_2-\lambda_1$ and $\mu_2-\mu_1$,
but it also contains the solution $u_{(\lambda_1,\mu_1)}^{g}$ that depends non-linearly on the coefficients.
Following the ideas of \cite{harrach2018global, harrach2019uniqueness}, we separate these dependencies by introducing 
\begin{align*}
\Psi&:\ L^2(\Gamma_{\textup N})^d\times \mathcal{F}\times \mathcal{F}_{[a,b]\times[c,d]}\to \mathbb{R}\\
\Psi\left(g,(\zeta_1,\zeta_2),(\kappa,\tau)\right)&:=\int_\Omega \zeta_1\left(\mathrm{div} u^{g}_{(\kappa,\tau)}\right)^2\,dx+
2\int_{\Omega} \zeta_2 \hat\nabla u_{(\kappa,\tau)}^{g}: \hat\nabla u_{(\kappa,\tau)}^{g}\,dx.
\end{align*}

Thus, we obtain for $\mathbb{C}_1\neq \mathbb{C}_2$,
\begin{align}\label{estim_3}
\frac{\Vert \Lambda(\mathbb{C}_2)-\Lambda(\mathbb{C}_1) \Vert_*}{d_\Omega(\mathbb{C}_1,\mathbb{C}_2)}\geq  
 \sup_{\Vert g\Vert=1}\Psi\left(g,\left(\frac{\lambda_1-\lambda_2}{d_\Omega(\mathbb{C}_1,\mathbb{C}_2)},\frac{\mu_1-\mu_2}{d_\Omega(\mathbb{C}_1,\mathbb{C}_2)}\right),(\lambda_1,\mu_1)\right).
\end{align}

By assumption (b), and the definition of $d_\Omega(\mathbb{C}_1,\mathbb{C}_2)$, it follows that the second argument of $\Psi$ in (\ref{estim_3}) belongs to the compact set
\begin{align*}
 {\mathcal{C}}&:=\left\{ (\zeta_1,\zeta_2)\in \mathcal{F}:  \quad  \zeta_1,\zeta_2\geq 0 \quad \text{  and }\quad 
 \max\left(\Vert \zeta_1\Vert_{L^\infty(\Omega)}, \Vert \zeta_2\Vert_{L^\infty(\Omega)}\right)= 1 \right\}.
\end{align*}
Hence, (\ref{estim_3}) yields that
\begin{equation}\label{eq:stability_infsup}
\frac{\| \Lambda(\mathbb{C}_2)-\Lambda(\mathbb{C}_1) \Vert_*}{d_\Omega(\mathbb{C}_1,\mathbb{C}_2)}\geq
\inf_{\substack{(\zeta_1, \zeta_2)\in \mathcal{C},\\ (\kappa,\tau)\in\mathcal{F}_{[a,b]\times[c,d]}}} \sup_{\| g \|=1} \Psi\left(g,(\zeta_1,\zeta_2),(\kappa,\tau)\right).
\end{equation}

The assertion of Theorem \ref{stability} follows if we can show that the right-hand side of \eqref{eq:stability_infsup}
is strictly positive. Since $\Psi$ is continuous, we can conclude that the function
\[
\left( (\zeta_1,\zeta_2),(\kappa,\tau) \right)\mapsto \sup_{\| g \|=1} \Psi\left(g,(\zeta_1,\zeta_2),(\kappa,\tau)\right)
\]
is semi-lower continuous, so that it attains its minimum on  the compact set \\
$\mathcal{C}\times \mathcal{F}_{[a,b]\times[c,d]}$.
Hence, to prove Theorem \ref{stability}, it suffices to show that
\begin{align}\label{estim_4}
\sup_{\| g \|=1} \Psi\left(g,(\zeta_1,\zeta_2),(\kappa,\tau)\right)>0
\quad \text{ for all }  (\zeta_1,\zeta_2)\in \mathcal{C}, \ (\kappa,\tau)\in \mathcal{F}_{[a,b]\times[c,d]}.
\end{align}

In order to prove that (\ref{estim_4}) holds true, let $\left((\zeta_1,\zeta_2),(\kappa,\tau)\right)\in \mathcal{C}\times \mathcal{F}_{[a,b]\times[c,d]}$.
Then there exist an open subset $\emptyset\neq D_1\subset \Omega$ and  a constant $0<\delta<1$,  such that either
\[
\begin{aligned}
&\text{(i)}\   \zeta_1|_{D_1}\geq \delta,  \text{ and } \zeta_2\geq 0, \text{ or }\\
&\text{(ii)}\   \zeta_2|_{D_1}\geq \delta,  \text{ and } \zeta_1\geq 0.
\end{aligned}
\]
We use the localized potentials sequence in Theorem~\ref{lemma:locpot} to obtain an open subset $D_2\subset \Omega$  with  $\overline{D_1}\cap \overline{D_2}=\emptyset$, and a  boundary load
$\tilde g\in L^2(\Gamma_{\textup N})^d$ with 
\begin{align}\label{estim_loc_pot}
\int_{D_1}  \left(\operatorname{div} u^{\tilde g}_{(\kappa,\tau)}\right)^2\,dx  \geq \frac{1}{\delta} \quad
 \text{ and } \quad \int_{D_1}\hat \nabla  u^{\tilde g}_{(\kappa,\tau)}: \hat \nabla  u^{\tilde g}_{(\kappa,\tau)}\,dx  \geq \frac{1}{2\delta }.
\end{align}

In case (i), this leads to
\[
\begin{aligned}
&\Psi\left(\tilde g,(\zeta_1,\zeta_2),(\kappa,\tau)\right)\\
&\geq \int_\Omega \zeta_1\left(\mathrm{div} u^{\tilde g}_{(\kappa,\tau)}\right)^2\,dx+
2 \int_\Omega \zeta_2 \hat\nabla  u^{\tilde g}_{(\kappa,\tau)}:\hat\nabla  u^{\tilde g}_{(\kappa,\tau)}\, dx \\
& \geq \int_{D_1} \zeta_1\left(\mathrm{div} u^{\tilde g}_{(\kappa,\tau)}\right)^2\,dx
 \geq \delta\int_{D_1} \left(\mathrm{div} u^{\tilde g}_{(\kappa,\tau)}\right)^2\,dx \geq 1
 \end{aligned}
\]
and in case (ii), we  have
\[
\begin{aligned}
&\Psi\left(\tilde g,(\zeta_1,\zeta_2),(\kappa,\tau)\right)\\
&\geq \int_\Omega \zeta_1\left(\mathrm{div} u^{\tilde g}_{(\kappa,\tau)}\right)^2\,dx+
 2 \int_\Omega \zeta_2 \hat\nabla  u^{\tilde g}_{(\kappa,\tau)}:\hat\nabla  u^{\tilde g}_{(\kappa,\tau)}\, dx \\
&\geq  2 \int_{D_1} \zeta_2 \hat\nabla  u^{\tilde g}_{(\kappa,\tau)}:\hat\nabla  u^{\tilde g}_{(\kappa,\tau)}\, dx
 \geq 2\delta \int_{D_1} \hat\nabla  u^{\tilde g}_{(\kappa,\tau)}:\hat\nabla  u^{\tilde g}_{(\kappa,\tau)}\, dx  \geq 1.
 \end{aligned}
\]

Hence, in both  cases, 
\[
\begin{aligned}
\sup_{\| g \|=1} \Psi(g,(\zeta_1,\zeta_2),(\kappa,\tau))
&\geq  \Psi\left(\frac{\tilde g}{\Vert \tilde g\Vert},(\zeta_1,\zeta_2),(\kappa,\tau)\right)\\
&=\frac{1}{\Vert\tilde g\Vert^2} \Psi(\tilde g,(\zeta_1,\zeta_2),(\kappa,\tau))>0,  
\end{aligned}
\]
so that Theorem \ref{stability} is proven.
\end{proof}

{\color{black}
\begin{remark} 
As a consequence of Theorem \ref{stability}, we end up with the following uniqueness result for 
all $\mathbb{C}_1:=\mathbb{C}(\lambda_1,\mu_1),\mathbb{C}_2:=\mathbb{C}(\lambda_2,\mu_2)$ {\color{black} 
from the finite dimensional space $\mathcal{E}$},
provided that either, $\lambda_1-\lambda_2\leq 0$ and $\mu_1-\mu_2\leq 0$, or $\lambda_1-\lambda_2\geq  0$ and $\mu_1-\mu_2\geq0$:
\[
\Lambda(\mathbb{C}_1)=\Lambda(\mathbb{C}_2)\quad \text{ if and only if }\quad \mathbb{C}_1=\mathbb{C}_2.
\]
\end{remark}
}
{\color{black}
\begin{remark}\label{DtN}
 All of the results in this section stay valid (with the same proofs) when the Neumann-to-Dirichlet operator $\Lambda(\mathbb{C})$
 is extended to the spaces $H^{-\frac{1}{2}}(\Gamma_{\textup{N}})\to H^{\frac{1}{2}}(\Gamma_{\textup{N}})$, where
 $H^{\frac{1}{2}}(\Gamma_{\textup{N}})=\lbrace u\vert_{\Gamma_{\textup{N}}}:u\in\mathcal{V}\rbrace$ and $H^{-\frac{1}{2}}(\Gamma_{\textup{N}})$
 is its dual. On these spaces, it is easily shown that $\Lambda(\mathbb{C})$ is bijective, and its inverse is the
 Dirichlet-to-Neumann operator $\Lambda_{\textup{D}}: f\mapsto u_{\textup{D}}\vert_{\Gamma_{\textup{N}}}$, where 
 $u_{\textup{D}}$ solves
 \begin{equation}
 \left\{
 \begin{aligned}
   -\text{div}(\mathbb{C}(\lambda,\mu)\hat\nabla u_{\textup D}) & = 0 &\quad& \text{ in }\Omega,\\
  u_{\textup D}&= f &\quad &\text{ on  }\Gamma_{\textup N},\\
  u_{\textup D}&= 0&\quad &\text{ on  }\Gamma_{\textup D}.\\
\end{aligned}
\right.
\end{equation}
\end{remark}
}

 \section{Numerical approach to solve the inverse problem}\label{sec4}
{\color{black} In this section, we consider Lam\'e parameters $(\lambda,\mu)\in \tilde{\mathcal{F}}$, 
where $\tilde{\mathcal{F}}$  is a finite dimensional subset of $L^\infty(\Omega)\times L^\infty(\Omega)$-functions
with positive minima.
}
\\
{\color{black} 
We first take a look at the inverse problem
\begin{equation}
\label{invp_num}
\text{ \it  Find }  \mathbb{C}(\lambda,\mu) \text{ \it knowing measurements}\,\,f_k=\Lambda(\mathbb{C}(\lambda,\mu))g_k,
\,\,k=1,\ldots K,
 \end{equation}
 \noindent
where $f_k\in L^2(\Gamma_{\textup N})^d$ is a measurement of the displacement corresponding to the
input surface load $g_k$, and $K\in \mathbb{N}$ is the number of measurements. 
\\
\\
To solve the inverse problem (\ref{invp_num}) numerically,
we first consider a single measurement $(f,g)$ and define a minimization problem of Kohn-Vogelius type:
\begin{align}\label{func-J}
\min_{(\lambda,\mu)\in \tilde{\mathcal{F}}}J(\lambda,\mu)=  
&\int_{\Omega} \mathbb{C}(\lambda,\mu)\hat \nabla (u_{\textup N}-u_{\textup D}):\hat \nabla(u_{\textup N}
-u_{\textup D}) \, dx.
\end{align}
\noindent
Here $u_{\textup N}$  and $u_{\textup D} $  solve  the following problems:
\begin{equation}\label{neum}
\left\{
\begin{aligned}
  -\text{div}(\mathbb{C}(\lambda,\mu)\hat\nabla u_{\textup N} ) & = 0&\quad &\text{ in }\Omega,\\
  (\mathbb{C}(\lambda,\mu)\hat \nabla u_{\textup N})\nu&= g & \quad&\text{ on  }\Gamma_{\textup N},\\
   u_{\textup N}&= 0& \quad&\text{ on  }\Gamma_{\textup D},
\end{aligned}
\right.
\end{equation}
\begin{equation}
\label{dir}
\left\{
\begin{aligned}
   -\text{div}(\mathbb{C}(\lambda,\mu)\hat\nabla u_{\textup D}) & = 0 &\quad& \text{ in }\Omega,\\
  u_{\textup D}&= f &\quad &\text{ on  }\Gamma_{\textup N},\\
  u_{\textup D}&= 0&\quad &\text{ on  }\Gamma_{\textup D}.\\
\end{aligned}
\right.
\end{equation}
 }
\noindent

\begin{theorem}\label{diffrobin}
{\color{black}
 The functional $J:L^\infty_+(\Omega) \times L^\infty_+(\Omega)\to \mathbb{R}$, defined in (\ref{func-J})
 is Fr\'echet differentiable, and  its Fr\'echet derivative  at  
 $(\lambda,\mu)\in  L^\infty_+(\Omega)\times L^\infty_+(\Omega)$ in the  
 direction  $(\tilde \lambda,\tilde\mu)\in L^\infty_+(\Omega)\times L^\infty_+(\Omega)$ is given by
 \begin{equation}
 \label{DJ}
 \begin{aligned}
  J^\prime\left(\lambda,\mu\right)(\tilde{\lambda},\tilde{\mu})
 =&\int_\Omega\mathbb{C}(\tilde{\lambda},\tilde{\mu})\hat\nabla u_{\textup D}:\hat\nabla u_{\textup D}\,dx
  -\int_\Omega\mathbb{C}(\tilde{\lambda},\tilde{\mu})\hat\nabla u_{\textup N}:\hat\nabla u_{\textup N}\,dx.
 \end{aligned}
 \end{equation}
 }
\end{theorem}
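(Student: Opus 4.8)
The plan is to establish Fr\'echet differentiability of the two parameter-to-solution maps $(\lambda,\mu)\mapsto u_{\textup N}$ and $(\lambda,\mu)\mapsto u_{\textup D}$, and then to exploit the variational structure so that the terms carrying these implicit derivatives cancel, leaving only the explicit dependence of $\mathbb{C}(\lambda,\mu)$ on the parameters. For the differentiability I would first note that $\mathbb{C}(\lambda,\mu)$ depends linearly on $(\lambda,\mu)$ by \eqref{tensor}, so the bilinear form $(u,v)\mapsto\int_\Omega\mathbb{C}(\lambda,\mu)\hat\nabla u:\hat\nabla v\,dx$ is linear in $(\lambda,\mu)$ and, for parameters with positive minima, uniformly coercive on $\mathcal{V}$ by Korn's inequality and Lax--Milgram. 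Writing $u_{\textup N}$, and (after subtracting a fixed lifting of $f$) the interior part of $u_{\textup D}$, as the image of a fixed right-hand side under the inverse of the associated operator, differentiability follows from the smoothness of operator inversion on the open set of coercive operators. Differentiating \eqref{neum} and \eqref{dir} in a direction $(\tilde\lambda,\tilde\mu)$ then produces sensitivity solutions $u_{\textup N}'\in\mathcal{V}$ and $u_{\textup D}'\in H_0^1(\Omega)^d$ (the latter because the prescribed Dirichlet data $f$ do not vary), characterized by
\[
\int_\Omega\mathbb{C}(\lambda,\mu)\hat\nabla u_{\textup N}':\hat\nabla v\,dx = -\int_\Omega\mathbb{C}(\tilde\lambda,\tilde\mu)\hat\nabla u_{\textup N}:\hat\nabla v\,dx \quad \forall\, v\in\mathcal{V},
\]
\[
\int_\Omega\mathbb{C}(\lambda,\mu)\hat\nabla u_{\textup D}':\hat\nabla v\,dx = -\int_\Omega\mathbb{C}(\tilde\lambda,\tilde\mu)\hat\nabla u_{\textup D}:\hat\nabla v\,dx \quad \forall\, v\in H_0^1(\Omega)^d.
\]

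Next, writing $w := u_{\textup N}-u_{\textup D}$ and $w' := u_{\textup N}'-u_{\textup D}'$ and using the major symmetry of the isotropic tensor (so that $\int_\Omega\mathbb{C}\hat\nabla a:\hat\nabla b\,dx = \int_\Omega\mathbb{C}\hat\nabla b:\hat\nabla a\,dx$), the product rule gives
\[
J'(\lambda,\mu)(\tilde\lambda,\tilde\mu) = \int_\Omega\mathbb{C}(\tilde\lambda,\tilde\mu)\hat\nabla w:\hat\nabla w\,dx + 2\int_\Omega\mathbb{C}(\lambda,\mu)\hat\nabla w':\hat\nabla w\,dx.
\]
The crux is then the identity
\[
\int_\Omega\mathbb{C}(\lambda,\mu)\hat\nabla w':\hat\nabla w\,dx = -\int_\Omega\mathbb{C}(\tilde\lambda,\tilde\mu)\hat\nabla u_{\textup N}:\hat\nabla w\,dx,
\]
which I would prove by splitting $w' = u_{\textup N}' - u_{\textup D}'$. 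The $u_{\textup N}'$-contribution is obtained by testing the first sensitivity equation with $v = w\in\mathcal{V}$, which reproduces exactly the right-hand side. The $u_{\textup D}'$-contribution vanishes: since $u_{\textup D}'\in H_0^1(\Omega)^d$, symmetry together with testing the weak forms of \eqref{neum} and \eqref{dir} with $v=u_{\textup D}'$ gives $\int_\Omega\mathbb{C}\hat\nabla u_{\textup N}:\hat\nabla u_{\textup D}'\,dx = \int_{\Gamma_{\textup N}}g\cdot u_{\textup D}'\,ds = 0$ and $\int_\Omega\mathbb{C}\hat\nabla u_{\textup D}:\hat\nabla u_{\textup D}'\,dx = 0$, whence $\int_\Omega\mathbb{C}\hat\nabla u_{\textup D}':\hat\nabla w\,dx = 0$. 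Substituting this identity and expanding $\int_\Omega\mathbb{C}(\tilde\lambda,\tilde\mu)\hat\nabla w:\hat\nabla w\,dx$ collapses the mixed $u_{\textup N}$--$u_{\textup D}$ terms and yields \eqref{DJ}.

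The point requiring the most care, and the main obstacle, is the precise bookkeeping of the function spaces: the cancellation of the implicit-derivative terms hinges on the asymmetry between the two states. Indeed $w\in\mathcal{V}$ but $w\notin H_0^1(\Omega)^d$ in general (because $w = u_{\textup N}-f$ on $\Gamma_{\textup N}$), whereas $u_{\textup D}'\in H_0^1(\Omega)^d$ precisely because the Dirichlet data are held fixed. It is this membership that both forces the $u_{\textup D}'$-term to vanish and permits $w$ to serve as an admissible test function in the $u_{\textup N}'$-sensitivity equation. I would finally record that the continuity of the expression \eqref{DJ} in $(\lambda,\mu)$, its linearity in $(\tilde\lambda,\tilde\mu)$, and the differentiability established in the first step together upgrade the G\^ateaux derivative computed here to a genuine Fr\'echet derivative.
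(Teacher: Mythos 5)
Your proof is correct, but it follows a genuinely different route from the paper's. You differentiate the parameter-to-solution maps directly: you set up the sensitivity equations for $u_{\textup N}'$ and $u_{\textup D}'$ via Lax--Milgram and smoothness of operator inversion, then exploit the variational structure --- testing the Neumann sensitivity equation with $w=u_{\textup N}-u_{\textup D}\in\mathcal{V}$, and killing the $u_{\textup D}'$-contribution through the crucial observation that $u_{\textup D}'\in H_0^1(\Omega)^d$ (fixed Dirichlet data) so that it is orthogonal, in the $\mathbb{C}(\lambda,\mu)$-energy pairing, to both states. The paper instead never touches the solution maps: it applies Green's formula to rewrite $J(\lambda,\mu)=\langle g,\Lambda(\mathbb{C})g\rangle+\langle\Lambda_{\textup D}(\mathbb{C})f,f\rangle-2\int_{\Gamma_{\textup N}}g\cdot f\,ds$, noting the cross term is data-only and hence parameter-independent, and then invokes the known Fr\'echet derivative of the Neumann-to-Dirichlet map $\Lambda$ (cited from the Lechleiter--Rieder reference) together with the identity $\Lambda_{\textup D}=\Lambda^{-1}$ (Remark \ref{DtN}), so that the $+u_{\textup D}$ term and its sign emerge from the operator-inverse differentiation rule $(\Lambda^{-1})'=-\Lambda^{-1}\Lambda'\Lambda^{-1}$. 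What the paper's argument buys is brevity and a conceptual explanation of the opposite signs in \eqref{DJ}; the price is reliance on the extension of $\Lambda$ to the trace spaces $H^{-1/2}(\Gamma_{\textup N})\to H^{1/2}(\Gamma_{\textup N})$ and on an external differentiability result. What your argument buys is self-containedness at the variational level --- everything reduces to admissible test-function bookkeeping, and your identification of the asymmetry ($w\in\mathcal{V}$ but $w\notin H_0^1(\Omega)^d$, while $u_{\textup D}'\in H_0^1(\Omega)^d$) is exactly the structural point that makes the Kohn--Vogelius functional's derivative come out in closed form. One small caveat common to both proofs: the Dirichlet problem \eqref{dir} requires $f$ to lie in the trace space of $\mathcal{V}$-functions, which holds here because $f$ is itself a measured trace $\Lambda(\mathbb{C})g$; your lifting step should say this explicitly.
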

\begin{proof}
{\color{black}
From the definition of the functional $J$, and applying Green's formula once, we have 
\begin{align*}
 J(\lambda,\mu)
 &=\int_{\Omega}\mathbb{C}(\lambda,\mu)\hat{\nabla}u_{\textup{N}}:\hat{\nabla}u_{\textup{N}}\,dx+
 \int_{\Omega}\mathbb{C}(\lambda,\mu)\hat{\nabla}u_{\textup{D}}:\hat{\nabla}u_{\textup{D}}\,dx
 -2 \int_{\Gamma_{\textup{N}}}g\cdot f\,ds\\
 &=\langle g,\Lambda(\mathbb{C}(\lambda,\mu))g\rangle + \langle \Lambda_{\textup{D}}(\mathbb{C}(\lambda,\mu))f,f\rangle 
 -2 \int_{\Gamma_{\textup{N}}}g\cdot f\,ds.
\end{align*}
\noindent
Since $\Lambda(\mathbb{C}(\lambda,\mu))$ is Fr\'echet differentiable with
\begin{align*}
 \langle g,\Lambda^\prime(\mathbb{C}(\lambda,\mu))(\tilde{\lambda},\tilde{\mu})g\rangle
 =-\int_{\Omega}\mathbb{C}(\tilde{\lambda},\tilde{\mu})\hat{\nabla}u_{\textup{N}}:\hat{\nabla}u_{\textup{N}}\,dx
\end{align*}
\noindent(c.f., e.g. \cite{Lechleiter} for a proof that uses only the variational formulation) and
\begin{align*}
 \Lambda_{\textup{D}}(\mathbb{C}(\lambda,\mu))=\Lambda(\mathbb{C}(\lambda,\mu))^{-1}
\end{align*}
\noindent
implies
\begin{align*}
 \langle \Lambda_{\textup{D}}^\prime(\mathbb{C}(\lambda,\mu))(\tilde{\lambda},\tilde{\mu})f,f\rangle&=
 -\langle \Lambda(\mathbb{C}(\lambda,\mu))^{-1}f,\Lambda^\prime(\mathbb{C}(\lambda,\mu))(\tilde{\lambda},\tilde{\mu})
 \Lambda(\mathbb{C}(\lambda,\mu))^{-1}f\rangle\\
 &=\int_{\Omega}\mathbb{C}(\tilde{\lambda},\tilde{\mu})\hat{\nabla}u_{\textup{D}}:\hat{\nabla}u_{\textup{D}}\,dx
\end{align*}
\noindent
(see Remark \ref{DtN}), the assertion follows.
}
\end{proof}
\noindent
{\color{black}
In the next section, we treat the case of several measurements $(f_1,g_1),\ldots,(f_K,g_K)$ by adding the respective
functionals of the form (\ref{func-J}), together with a regularization term.
}

\section{Implementation details and numerical examples}\label{sec5}
{\color{black} As stated in the introduction, our aim is to obtain first results
for the solution of the inverse problem of elasticity. Thus, we examine the intuitive 2 dimensional setting as a first descriptive and meaningful example. 
In doing so, we consider the following setup for our numerical example:}
The domain $\Omega$  under consideration is the {\color{black} two dimensional}  unit disk centered at  the origin.
We  use a Delaunay triangular mesh and a   standard finite element method with piecewise finite elements to numerically compute  the states  for our problem. 
 The exact  data $f$  are computed  synthetically by solving the direct problem \eqref{direct}. 
 In   the real-world, the data $f$  are experimentally acquired and thus {\color{black} can be corrupted by noise such as arising from quantization errors.}
 
In our numerical examples the simulated noise data  are generated using the following formula:
\[
\tilde{f}(x_1,x_2)= f(x_1,x_2)\left(1+\varepsilon\delta\right)\quad \text{ on } \Gamma_{\textup N},
\]
where $\delta$ is a uniform distributed random variable and $\varepsilon$ indicates the level of noise.  For our examples, the random variable $\delta$ is realized using 
the Matlab   function rand$()$. 
 We use the BFGS algorithm {\color{black}\cite{hong}},  {\color{black}from  the optimization toolbox of Matlab},   to minimize  the cost function defined in  \eqref{func-J}.  This quasi-Newton method is well adapted  to such problem.   
\subsection{Numerical examples}
For the following numerical examples,  we  use four  measurements 
{\color{black}
$f_1,\ldots,f_4$}
corresponding to the following surface  loads:
\[
g_1=(0.1,0.1), \quad g_2=(0.1,0.2),\quad g_3=(0.2,0.1), \quad \text{ and } \quad g_4=(0.3,0.5)
 \text{ on }\Gamma_{\textup N}. 
\] 
{\color{black}
We reconstruct the Lam\'e parameters by minimizing the regularized cost functional
\begin{align}\label{J_reg}
J(\lambda,\mu):=  \sum_{k=1}^4\int_{\Omega} \mathbb{C}(\lambda,\mu)\hat \nabla (u^{g_k}_{\textup N}-u^{f_k}_{\textup D}):
\hat \nabla(u^{g_k}_{\textup N}-u^{f_k}_{\textup D}) \, dx+{\color{black}\frac{\rho}{2}\int_\Omega \left(\lambda^2+\mu^2\right)\,dx},
\end{align}
where  $u^{g_k}_{\textup N}$ and $u^{f_k}_{\textup D}$  are  the solutions to problem \eqref{neum} and \eqref{dir}
respectively  with respect to the boundary load $g_k$ and   the corresponding measurement data $f_k$. 
Here, $\rho$ is a heuristically chosen regularization parameter. The derivative of (\ref{J_reg}) is easily obtained
from Theorem \ref{diffrobin}.
}

\subsubsection{Example 1}
In this example, the  Lam\'e parameters  to  be recovered  are  assumed  to be {constant on $\Omega$} and we consider the minimization problem of recovering two scalar parameters.
Let $(\lambda_i,\mu_i){\in \mathbb{R}^2}$  denote the initialization,
 $(\lambda_e,\mu_e){\in \mathbb{R}^2}$ the exact parameters to be recovered and  $(\lambda_c,\mu_c){\in \mathbb{R}^2}$ the computed parameters.
 {\color{black} }
 Table \ref{lame} summarizes the computational results of the algorithm. Figures \ref{geom1}-\ref{geom3}, show  
 the decrease of the cost function $J$  and the $L^\infty$-norm of $J^\prime$ in the course of the optimization process.
 The numerical solution represents a good  approximation and it is stable with respect to small amounts of noise.

 \begin{table}[h]
\begin{center}
 \begin{tabular}{ |c|c|c|c|c|c| c|}
\hline
 &$(\lambda_i,\mu_i)$ & $(\lambda_c,\mu_c)$ & $(\lambda_e,\mu_e)$& $ \frac{|\lambda_c-\lambda_e|}{|\lambda_e |}$   & $\frac{|\mu_c-\mu_e|}{|\mu_e |}$ \\
\hline
 $\epsilon=0.0,\; \rho=0.0$&(1,1)& (2.9999, 6.9999) & (3,7) & 3.33e-07&1.42e-07\\
\hline
 $\epsilon=0.03,\; \rho=0.00001$&(1,1)& (2.6564, 6.8989) & (3,7) & 0.1145& 0.0144\\
 \hline
 $\epsilon=0.05,\; \rho=0.00001$&(1,1)& ( 2.6527,6.8869) & (3,7) & 0.1157&0.0161\\
 \hline
\end{tabular}
\end{center} 
\caption{{\color{black}Simulation results for Example 1: Reconstruction of constant} Lam\'e parameters.}\label{lame}
\end{table}
 
\noindent
\begin{figure}[ht!]
\begin{center}
 \begin{tabular}{c@{\qquad} c}
 \includegraphics[width=0.45\textwidth]{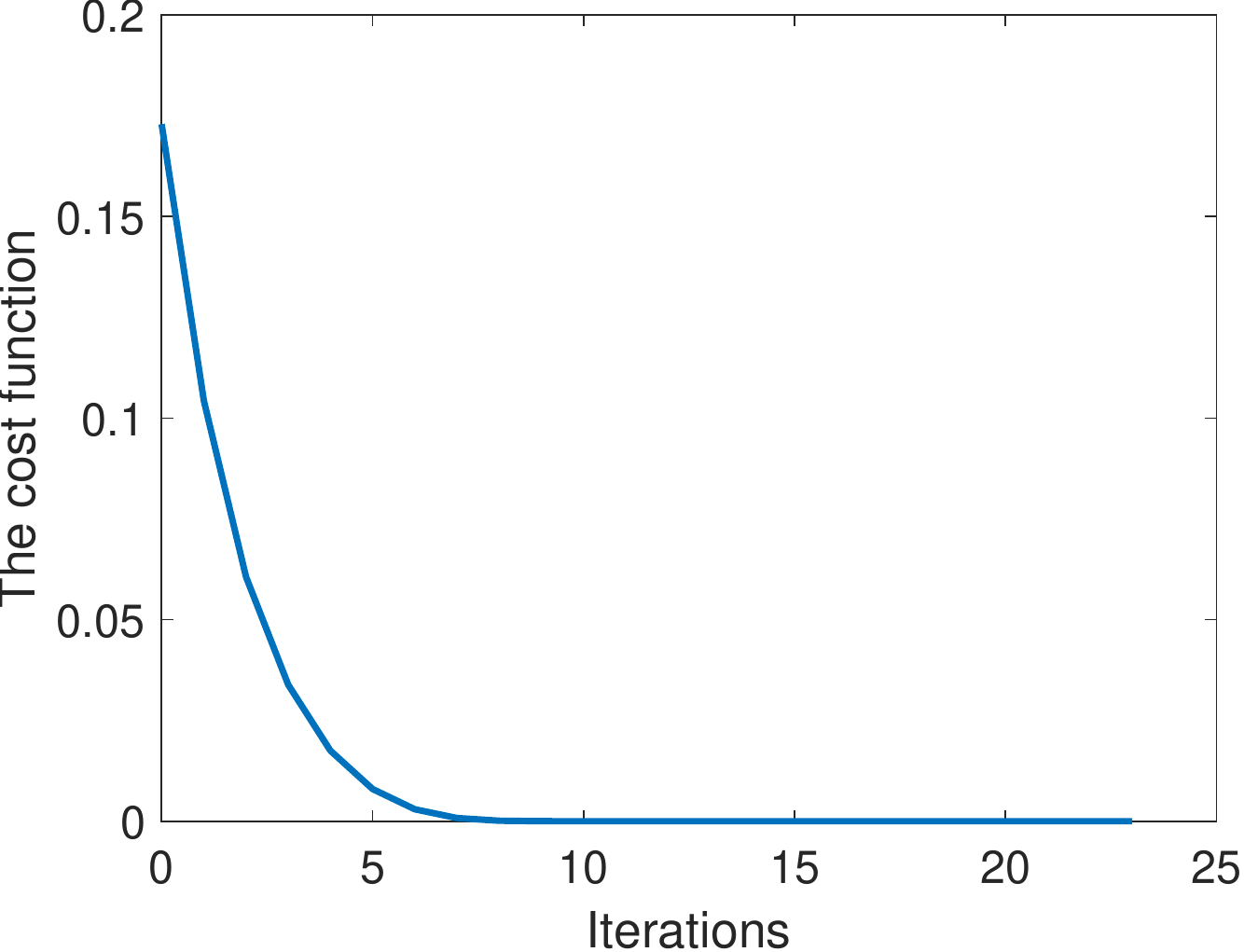} &
\includegraphics[width=0.45\textwidth]{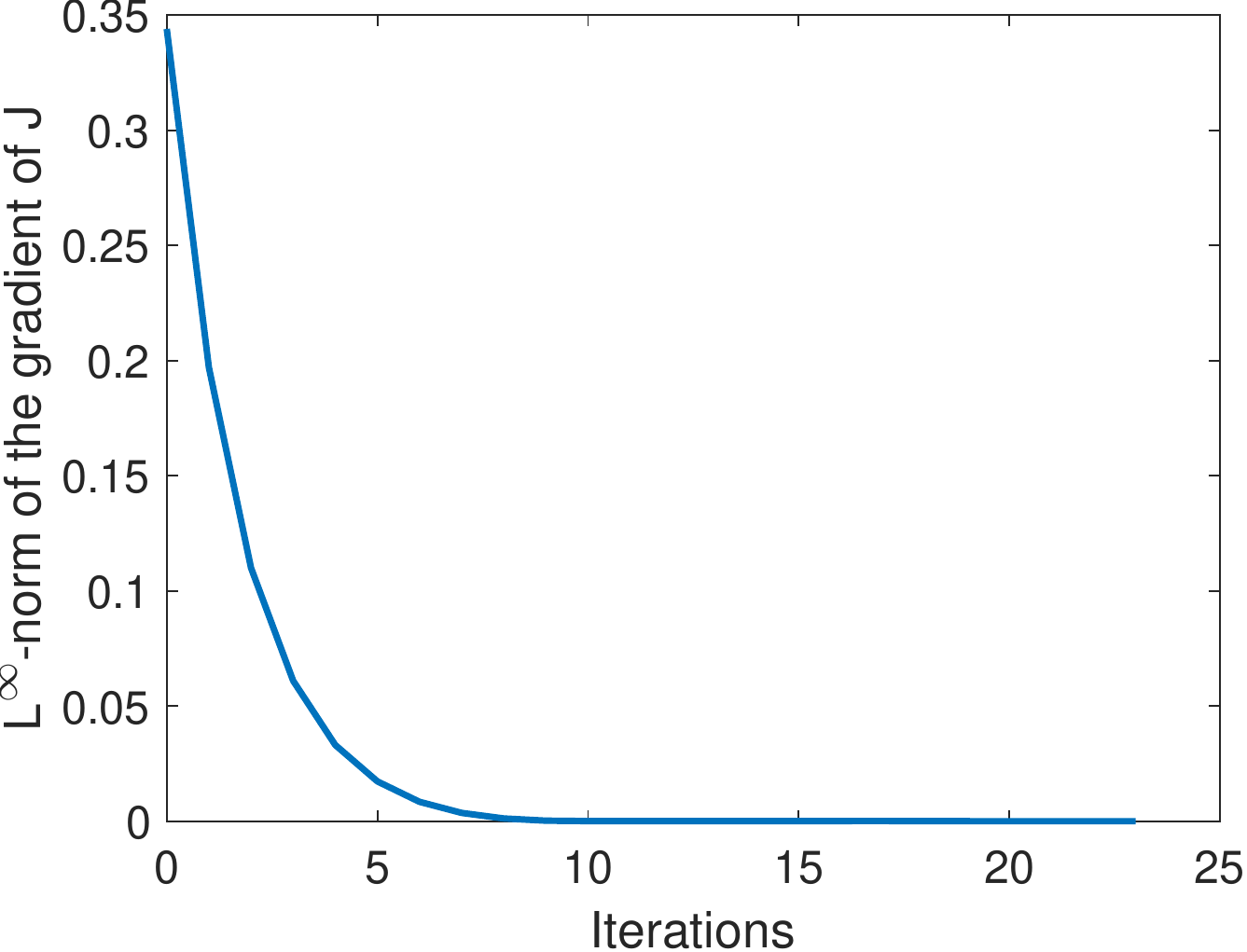}
\end{tabular}
 \caption{Simulation results for Example 1:  History of the cost function $J$ and the $L^\infty$-norm of $J^\prime$  in the case of  
   $\varepsilon=0.0$ and $\rho=0.0$.}
  \label{geom1} 
\end{center}
 \end{figure}


 \begin{figure}[ht!] 
 \begin{center}
 \begin{tabular}{c@{\qquad} c}
\includegraphics[width=0.45\textwidth]{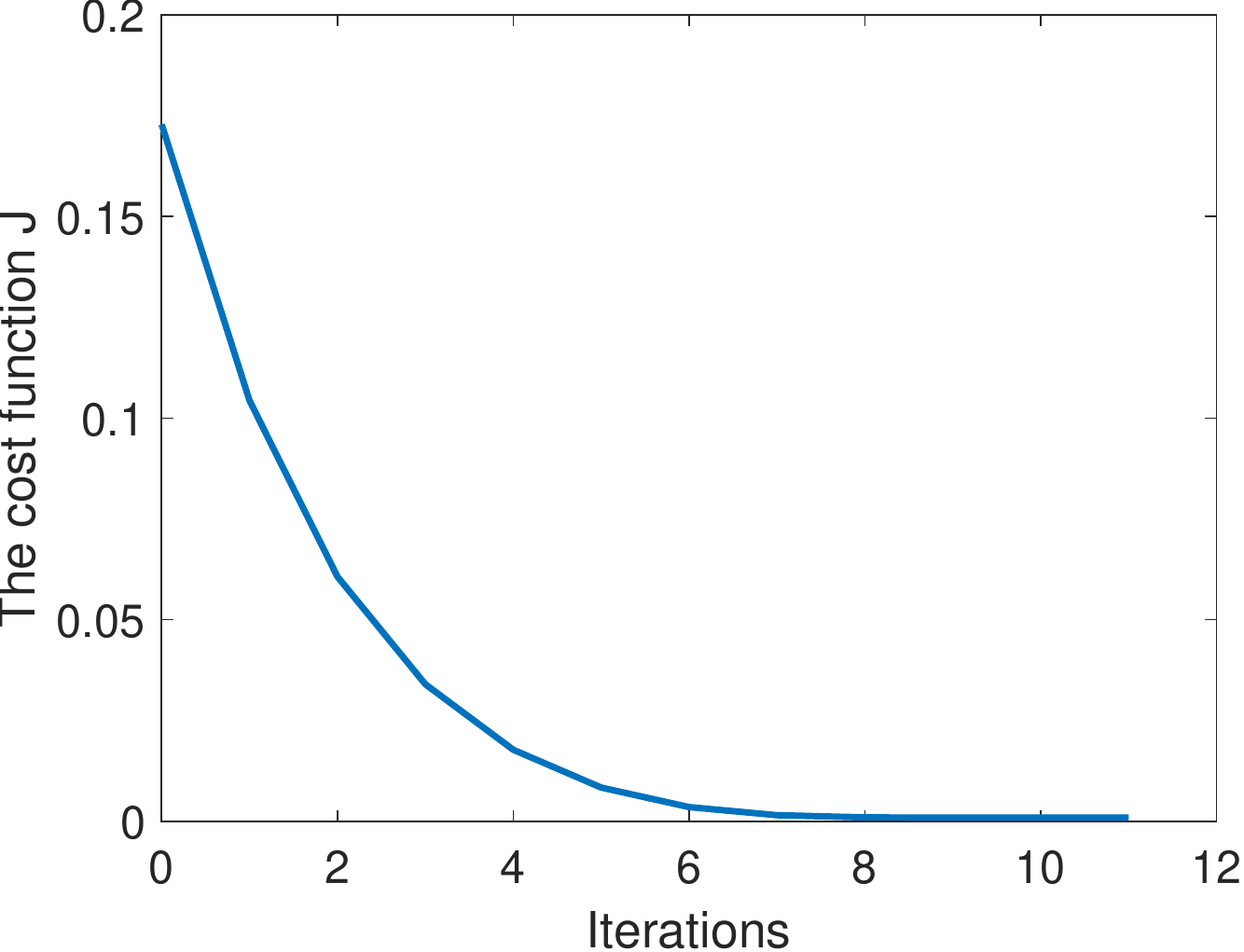} &
\includegraphics[width=0.45\textwidth]{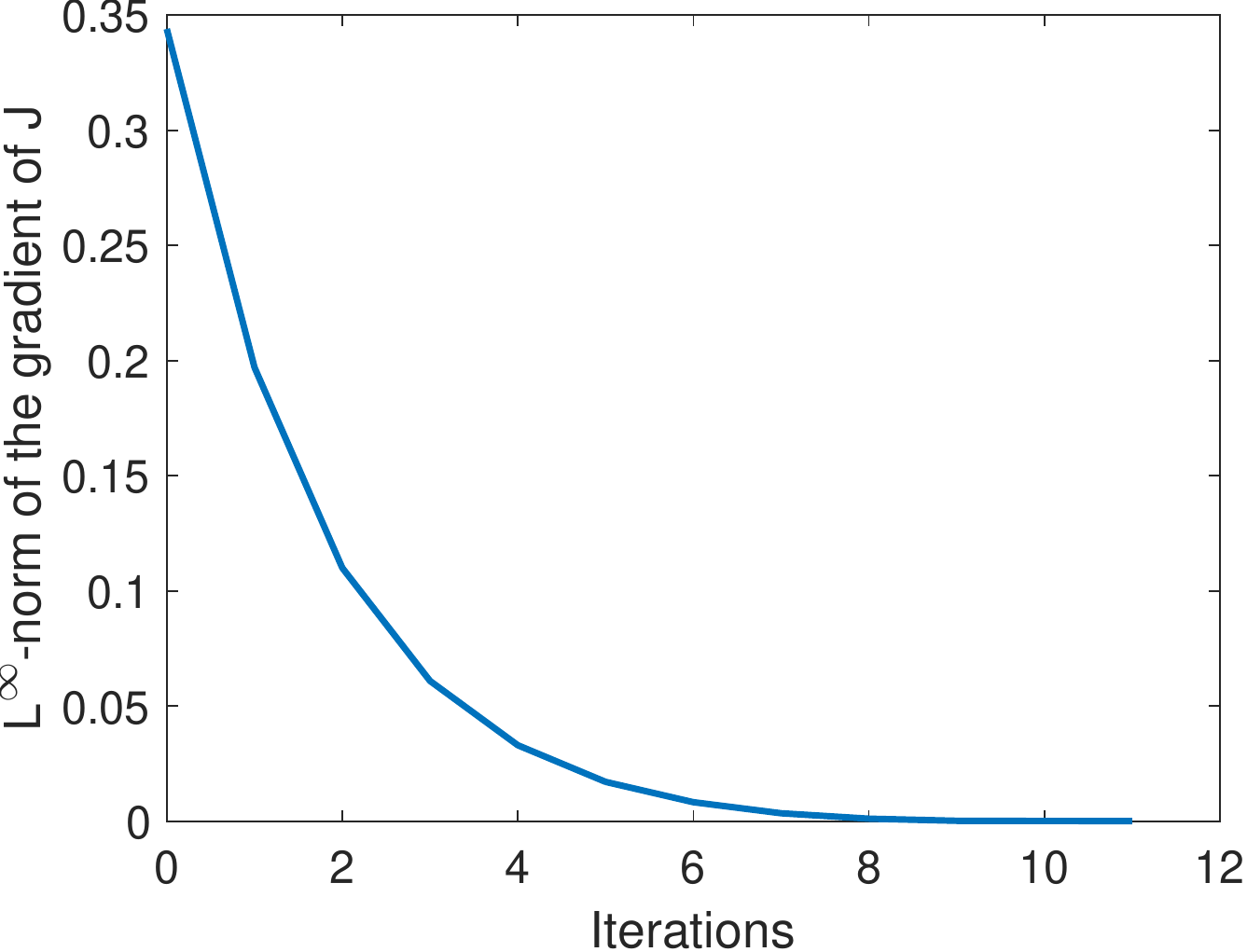}
\end{tabular}
 \caption{Simulation results for Example 1: History of the cost function $J$ and the $L^\infty$-norm of $J^\prime$  in the case of  
   $\varepsilon=0.03$ and $\rho=0.00001$.}
  \label{geom2} 
  \end{center}
 \end{figure}


 \begin{figure}[ht!] 
 \begin{center}
  \begin{tabular}{c@{\qquad} c}
\includegraphics[width=0.45\textwidth]{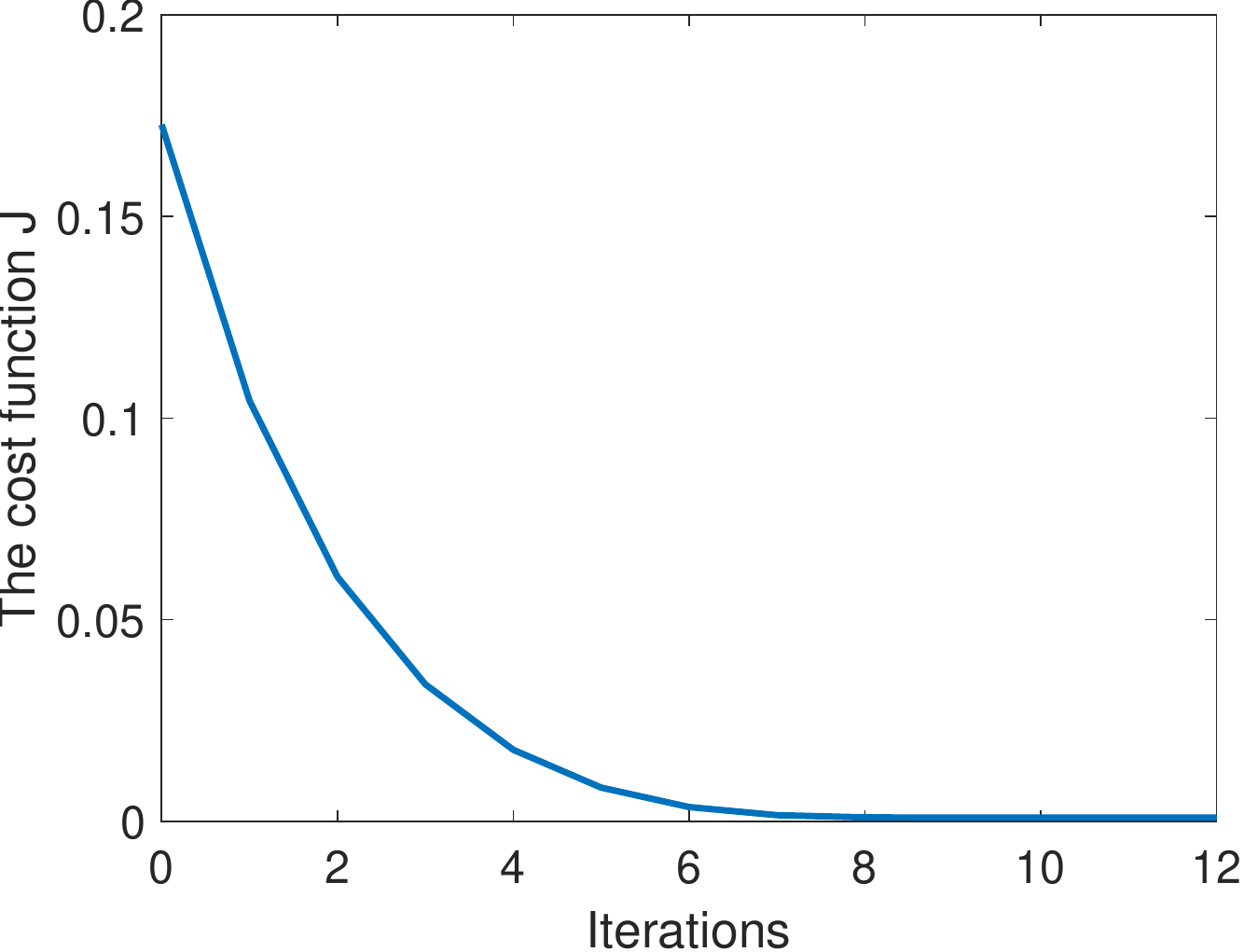} &
\includegraphics[width=0.45\textwidth]{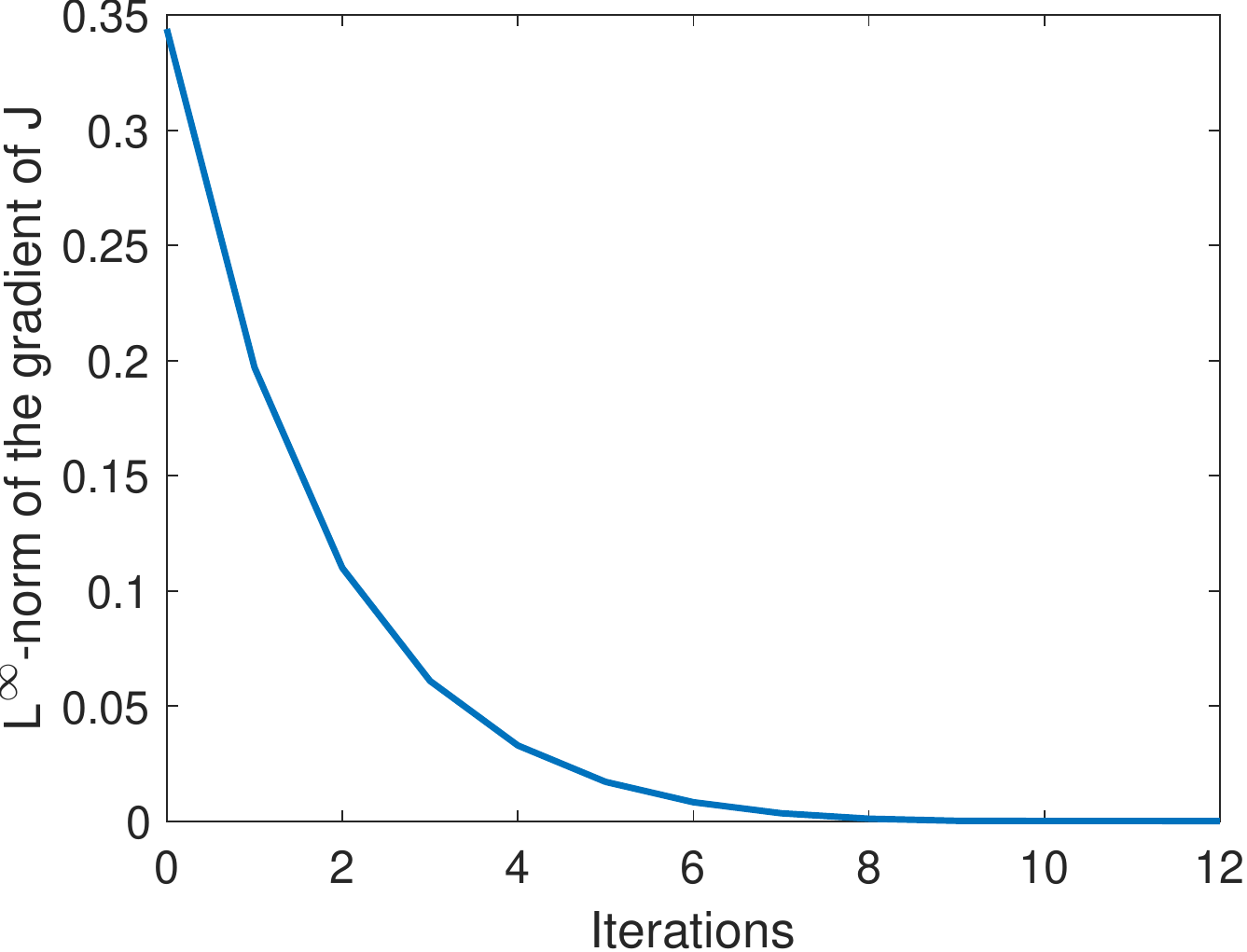}
\end{tabular}
 \caption{Simulation results for Example 1:  History of the cost function $J$ and the $L^\infty$-norm of $J^\prime$  in the case of  
  $\varepsilon=0.05$ and $\rho=0.00001$.}
  \label{geom3} 
\end{center}
 \end{figure}

 \subsubsection{Example 2}
 {\color{black} In this example,  the exact   Lam\'e  parameters (Figure \ref{geom4})  to  be  reconstructed   are given by 
 \[
\mu_e(x_1,x_2)=\sqrt{x_1^2+x_2^2}\quad \text{ and }\quad \lambda_e(x_1,x_2)=1.
\]
{We reconstruct $\mu$ and $\lambda$ by minimizing the functional \eqref{J_reg} in the space of piecewise constant functions on
the FEM mesh.} The choice $(\lambda_0,\mu_0)=(0.3, 0.5)$ was made as an initial guess. 

The resulting reconstructions  and the absolute errors are depicted in Figures \ref{geom5}-\ref{geom6}
and Figures \ref{geom7}-\ref{geom8}. The history  of cost function $J$  and the $L^\infty$-norm of $J^\prime$ in the course of the optimization process  are depicted in Figures \ref{geom9}-\ref{geom10}. 
The parameter $\lambda_e$  is well reconstructed, while  the parameter $\mu_e$  is less well  reconstructed.
}
\begin{figure}[ht!]
\begin{center}
\begin{tabular}{c c}
 \includegraphics[height=0.2\textheight]{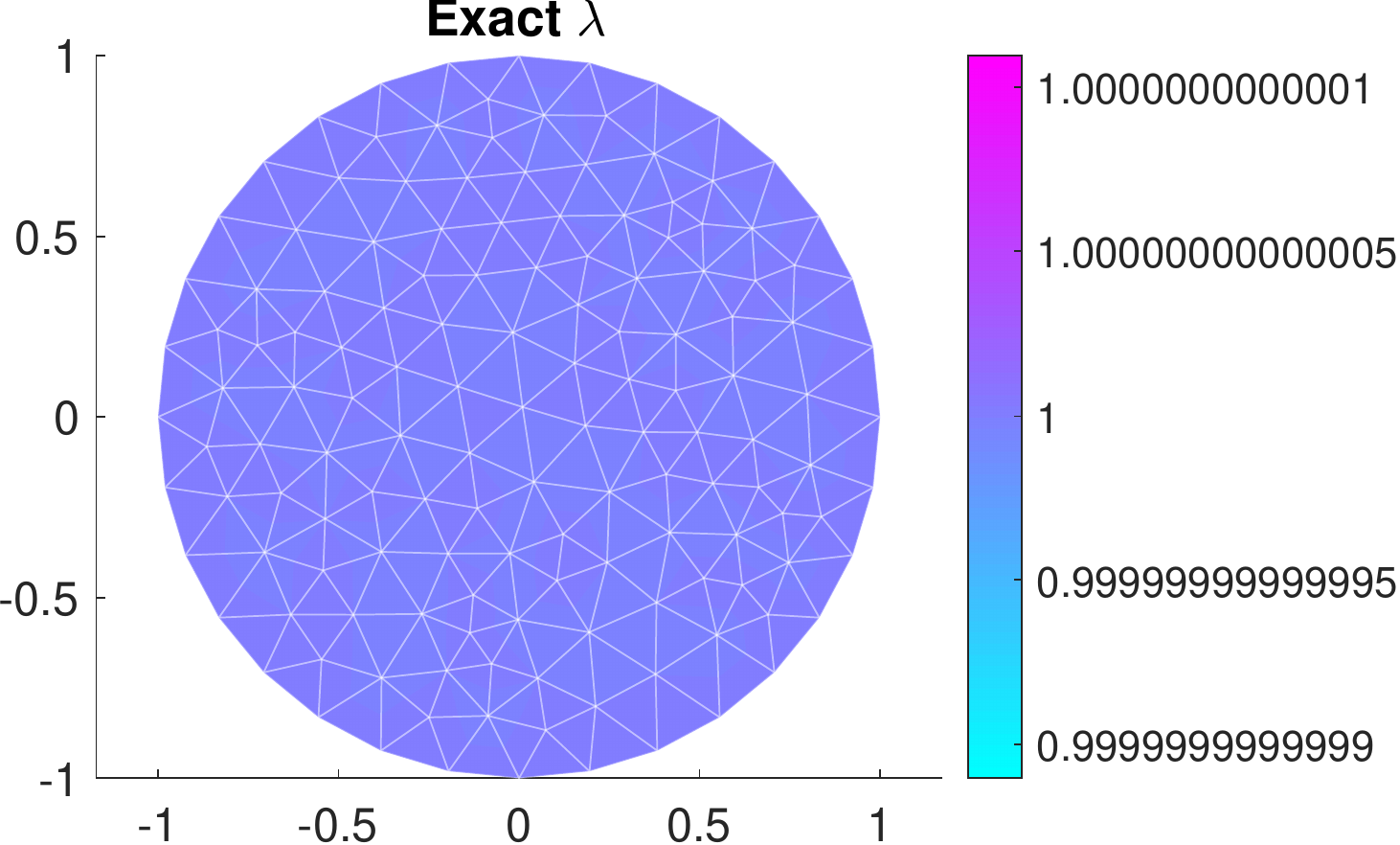} &
 \includegraphics[height=0.2\textheight]{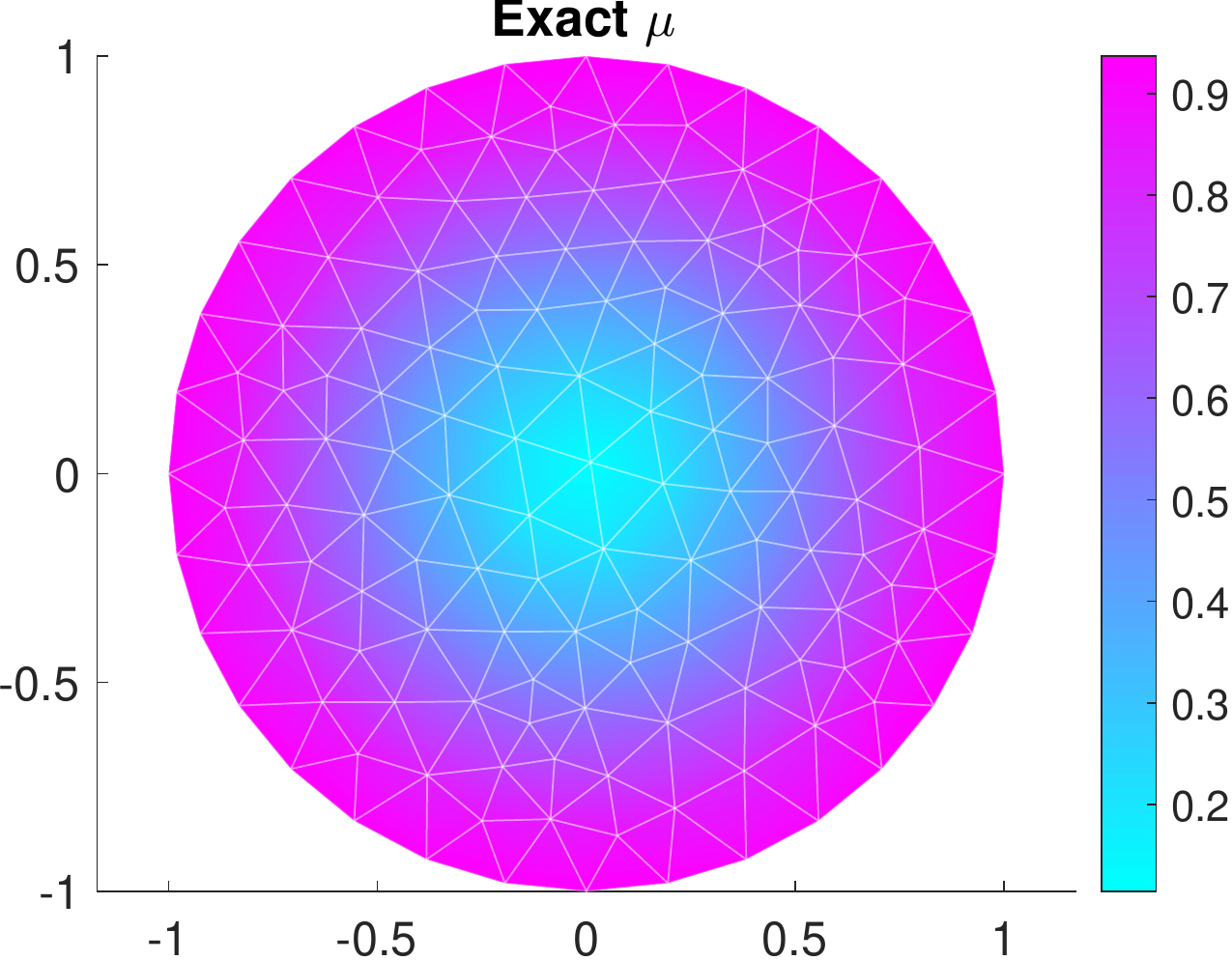}
\end{tabular}
 \caption{Simulation results for Example 2: The  exact Lam\'e  parameters.}
  \label{geom4}
  \end{center}
 \end{figure}
\begin{figure}[ht!]
\begin{center}
\begin{tabular}{c c} 
 \includegraphics[height=0.2\textheight]{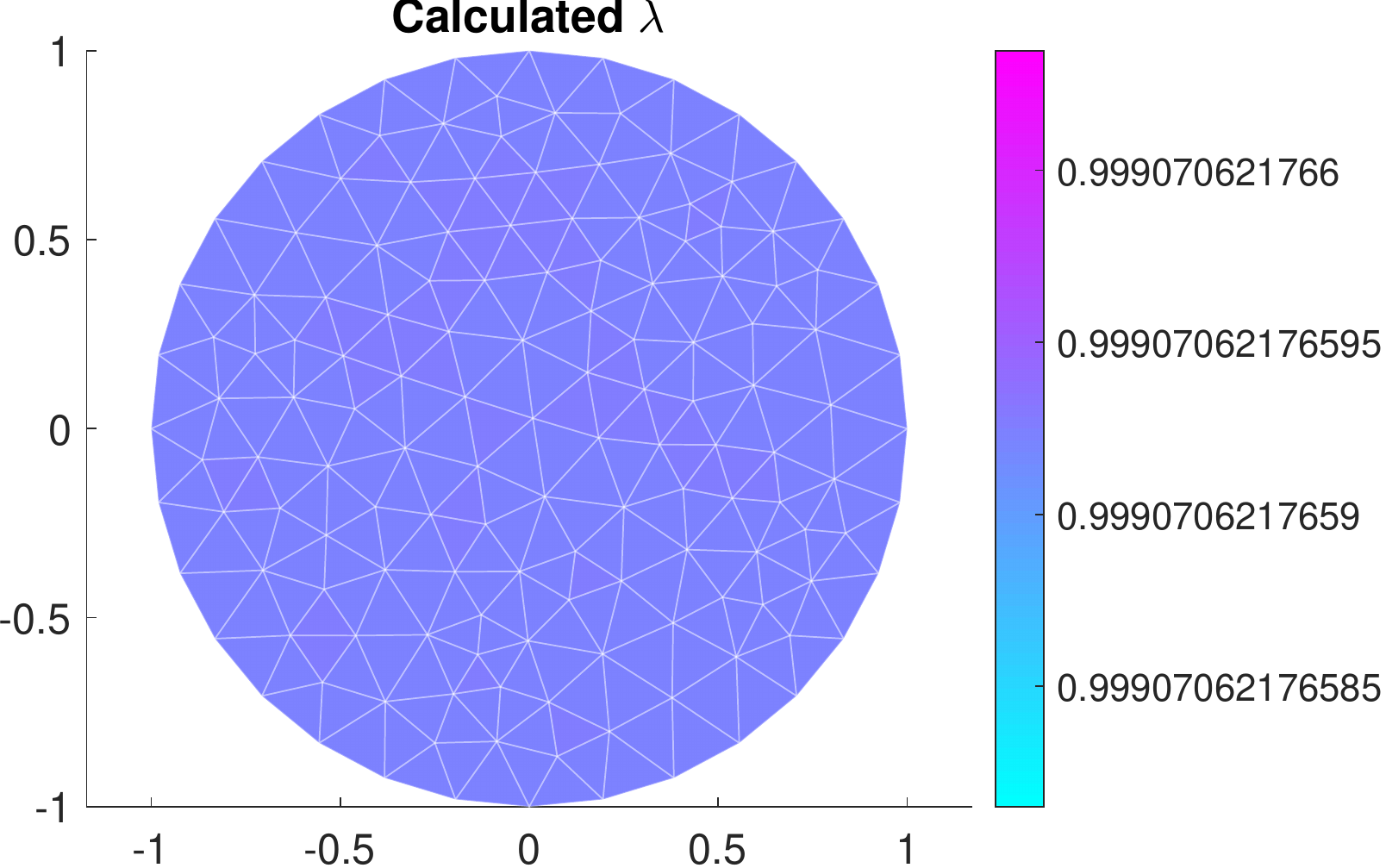} &
 \includegraphics[height=0.2\textheight]{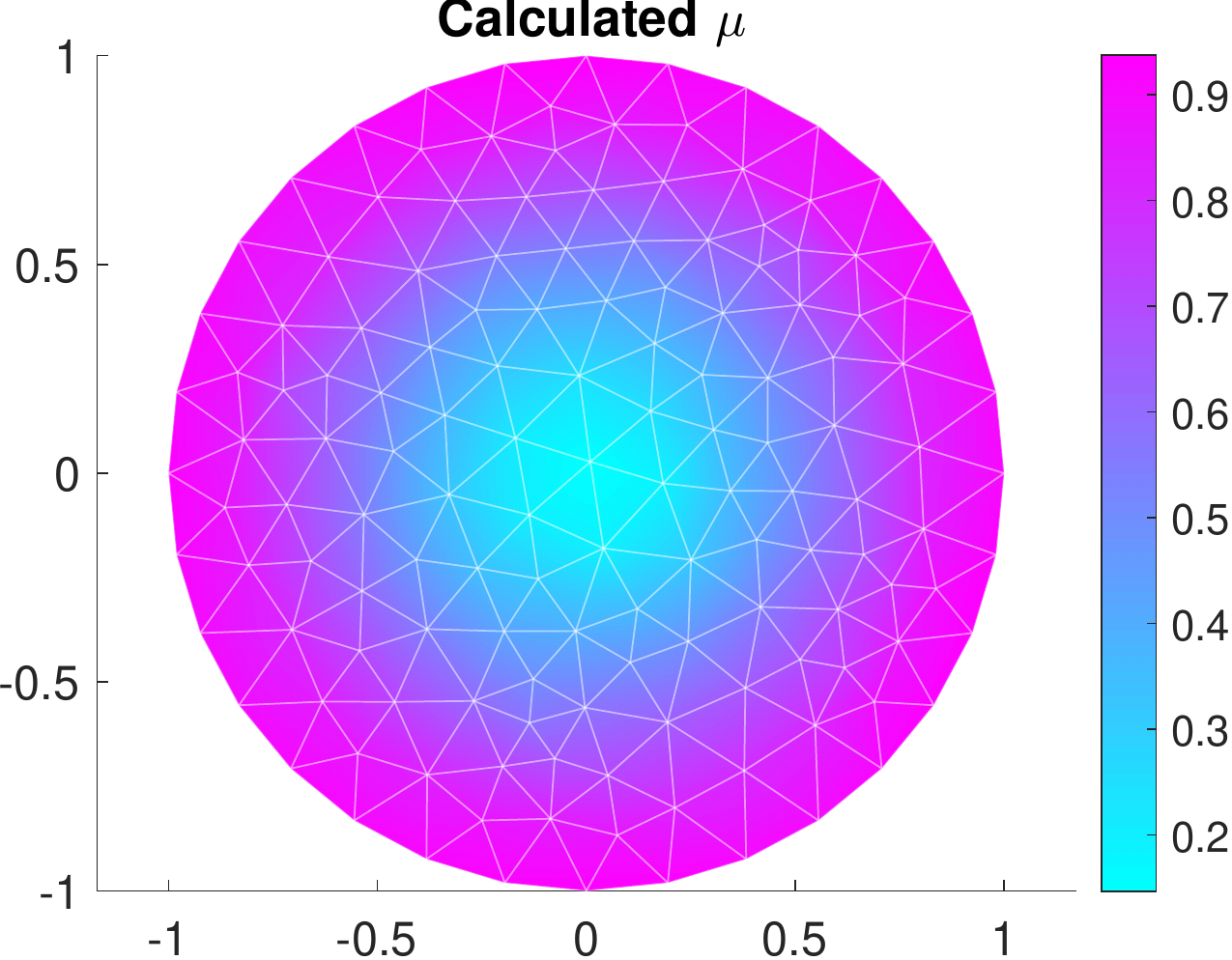}
 \end{tabular}
 \caption{Simulation results for Example 2: The computed  Lam\'e  parameters ($\epsilon=0.0$ and $\rho=0.0$).}
  \label{geom5}
  \end{center}
 \end{figure}
\begin{figure}[ht!]
\begin{center}
\begin{tabular}{c c}
 \includegraphics[height=0.2\textheight]{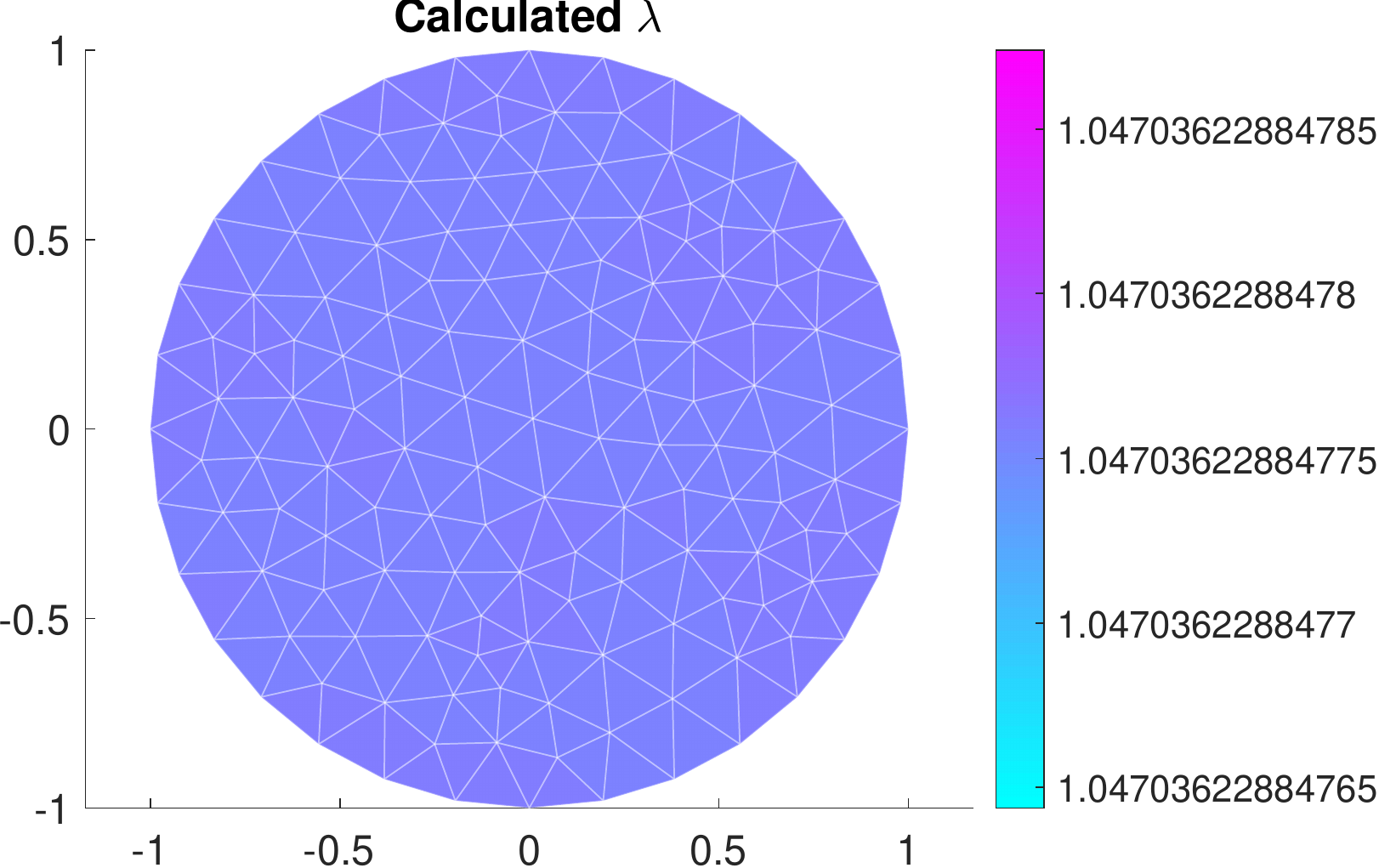} &
 \includegraphics[height=0.2\textheight]{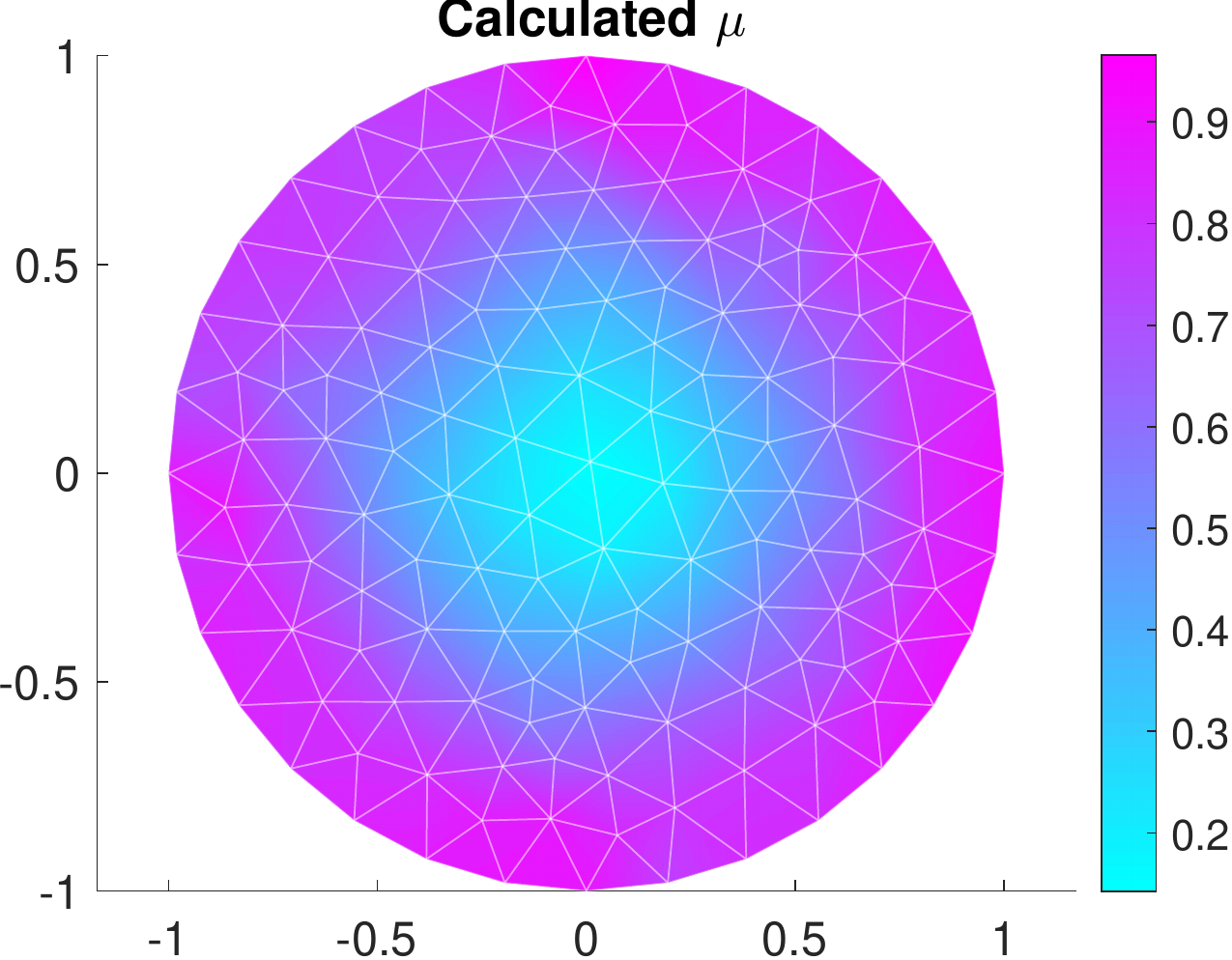}
\end{tabular}
 \caption{Simulation results for Example 2: The computed  Lam\'e  parameters ($\epsilon=0.03$ and $\rho=0.0001$).}
  \label{geom6}
  \end{center}
 \end{figure}
 \begin{figure}[ht!]
 \begin{center}
 \begin{tabular}{c c}
 \includegraphics[height=0.2\textheight]{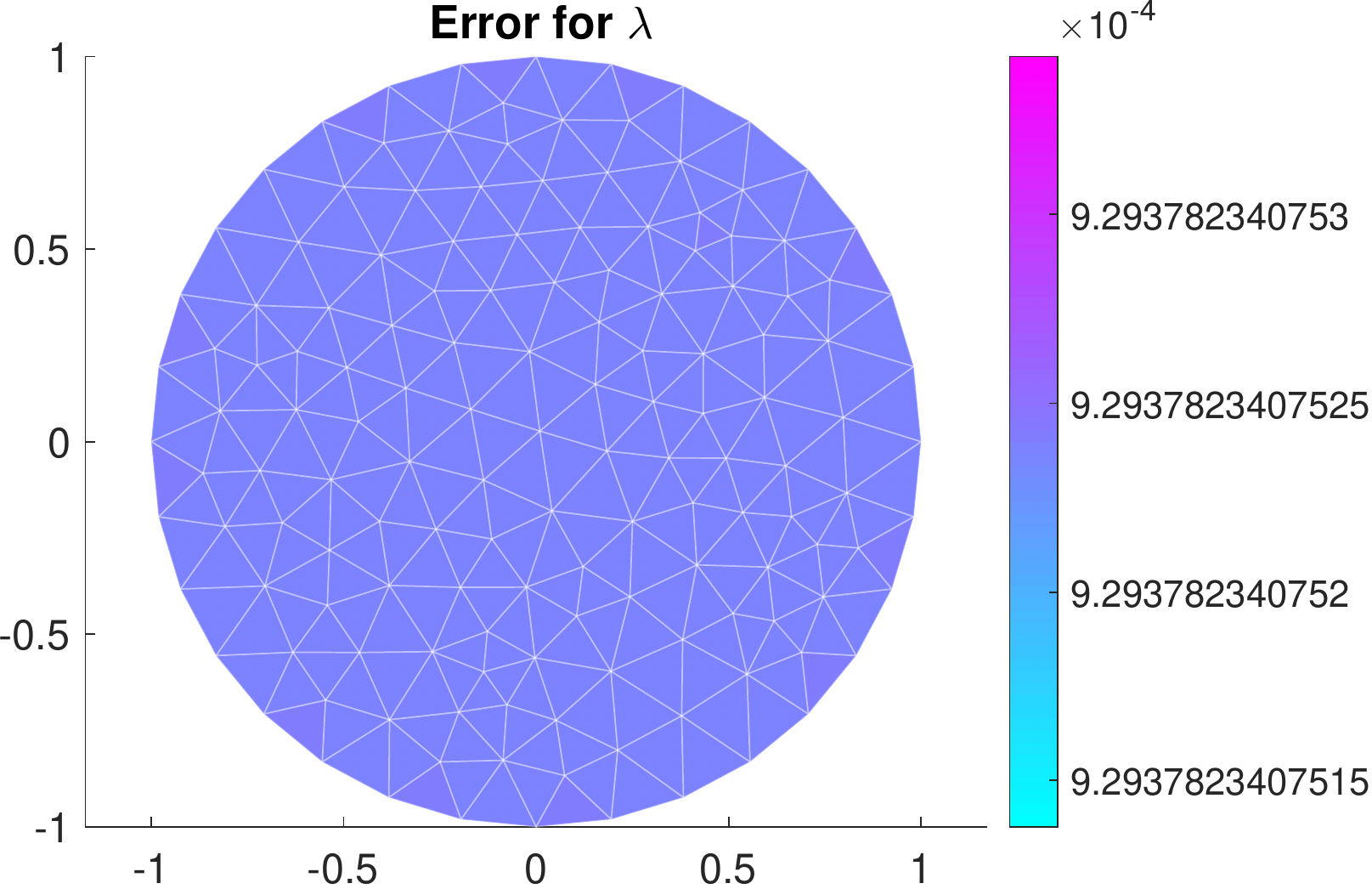} &
 \includegraphics[height=0.2\textheight]{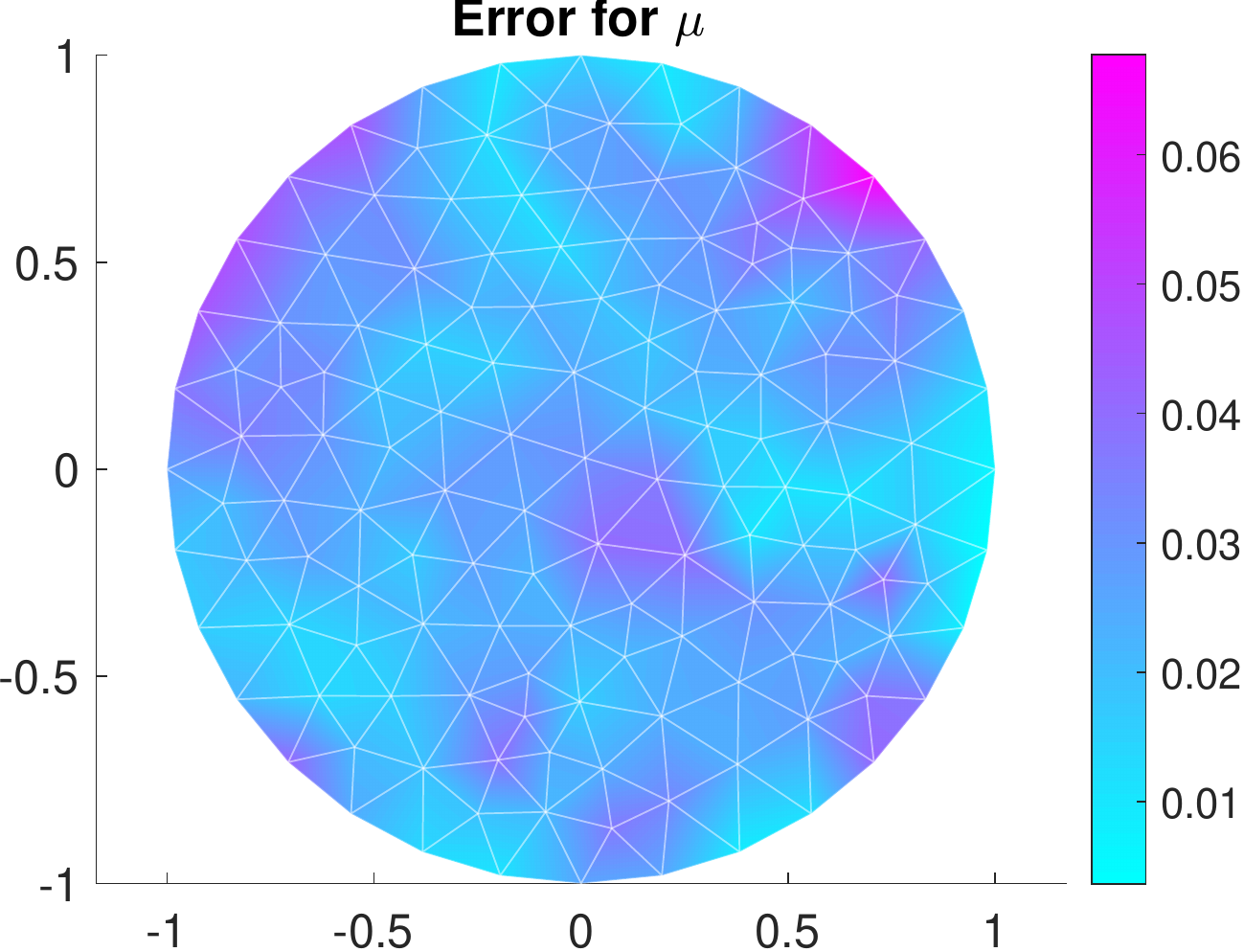}
 \end{tabular}
 \caption{Simulation results for Example 2:   Error for $\lambda$ and $\mu$ ($\epsilon=0.0$ and $\rho=0.0$).}
  \label{geom7}
  \end{center}
 \end{figure}
 
 \begin{figure}[ht!]
\begin{center}
\begin{tabular}{c c}
 \includegraphics[height=0.2\textheight]{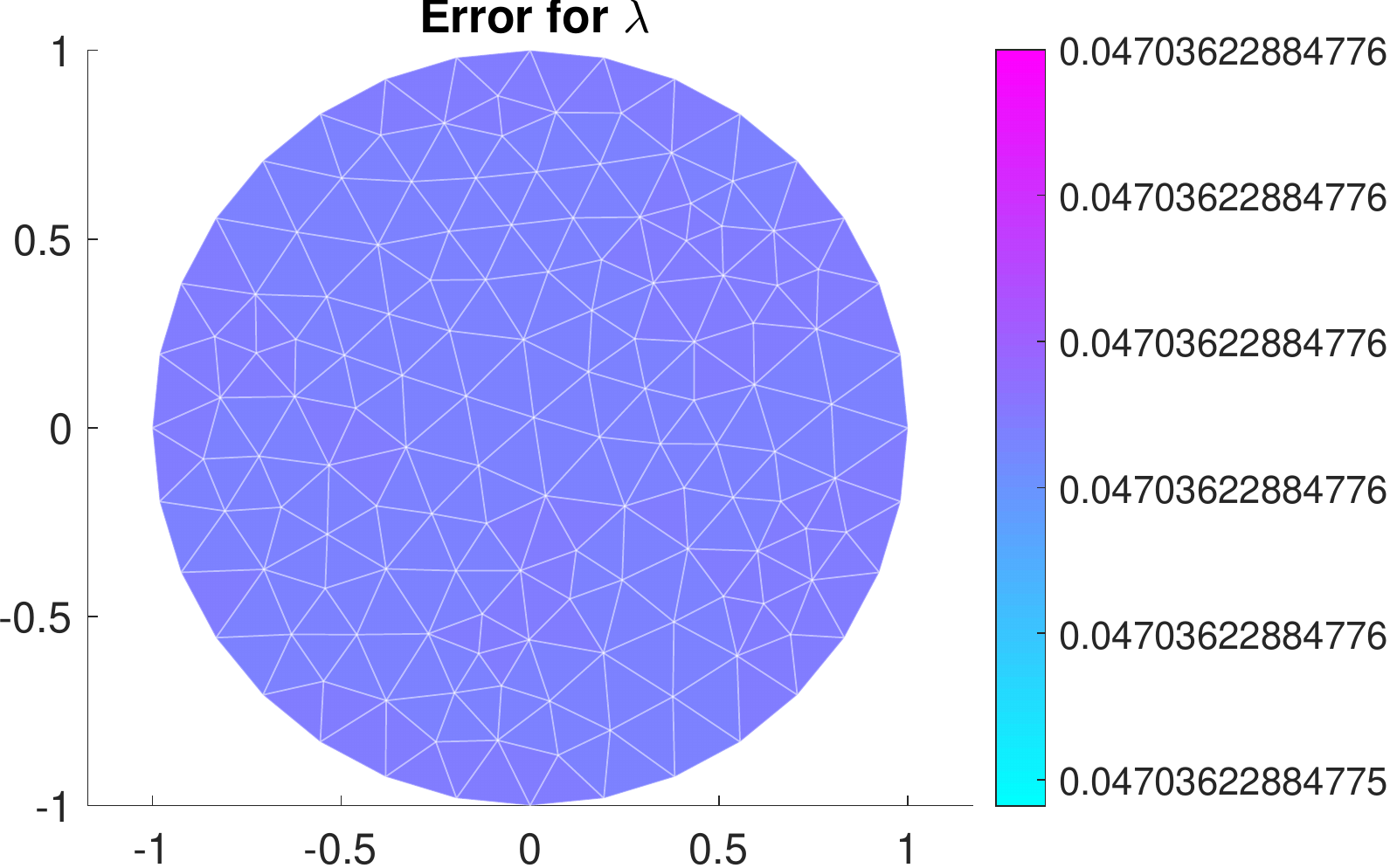} &
 \includegraphics[height=0.2\textheight]{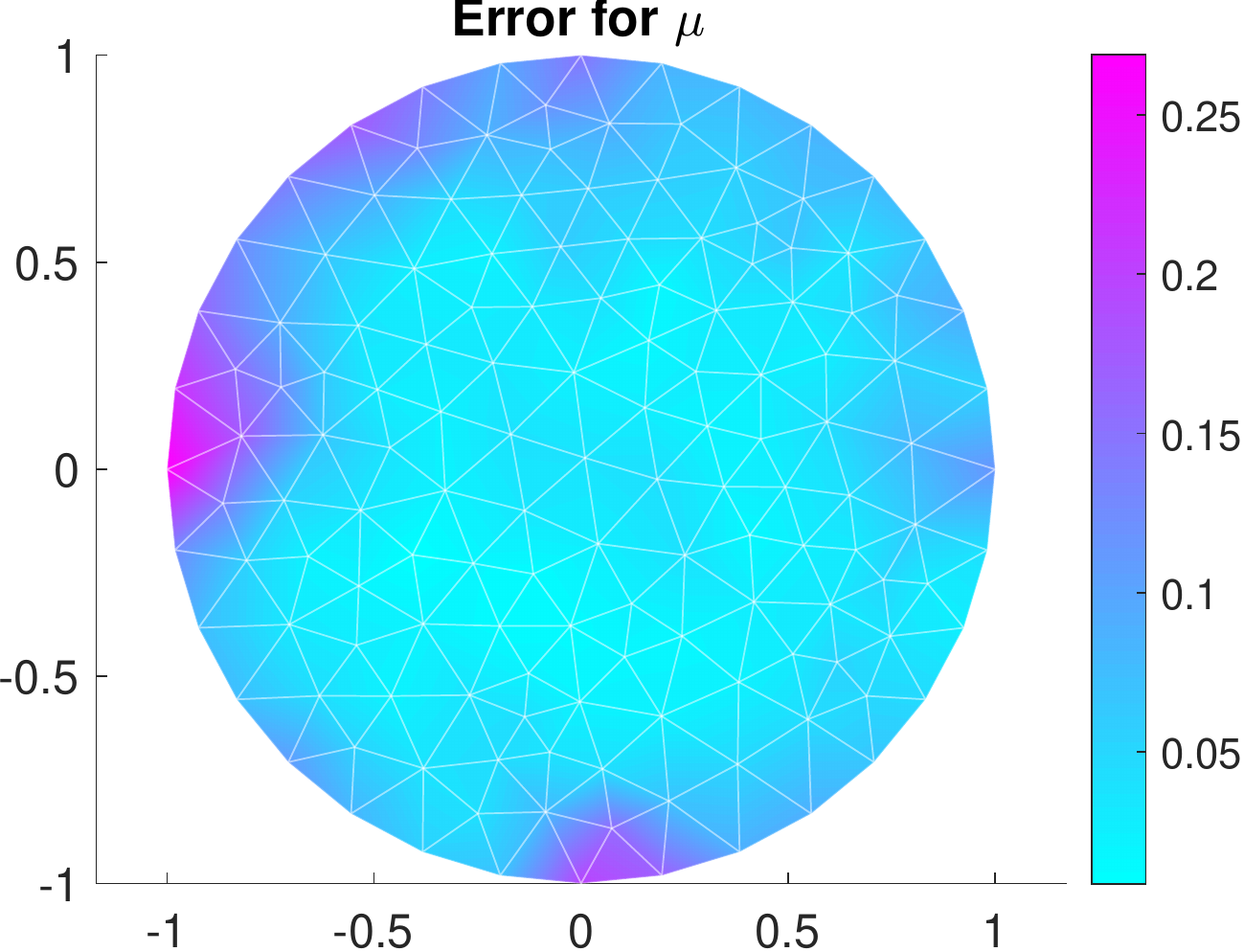}
\end{tabular}
 \caption{Simulation results for Example 2: Error for $\lambda$  and $\mu$ ($\epsilon=0.03$ and $\rho=0.0001$).}
  \label{geom8}
  \end{center}
 \end{figure}

 \begin{figure}[ht!]
 \begin{center}
 \begin{tabular}{c@{\qquad} c}
 \includegraphics[width=0.45\textwidth]{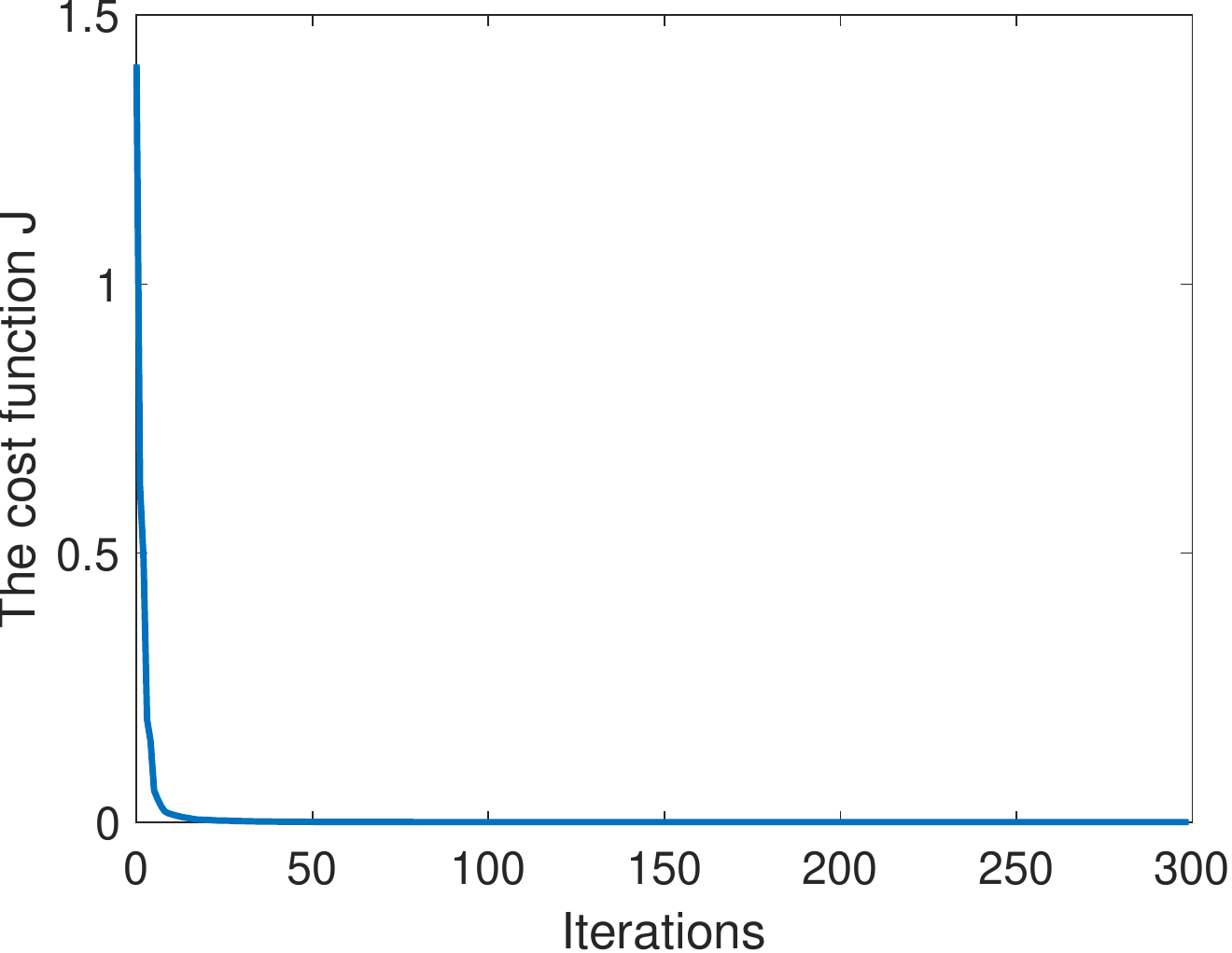} &
 \includegraphics[width=0.45\textwidth]{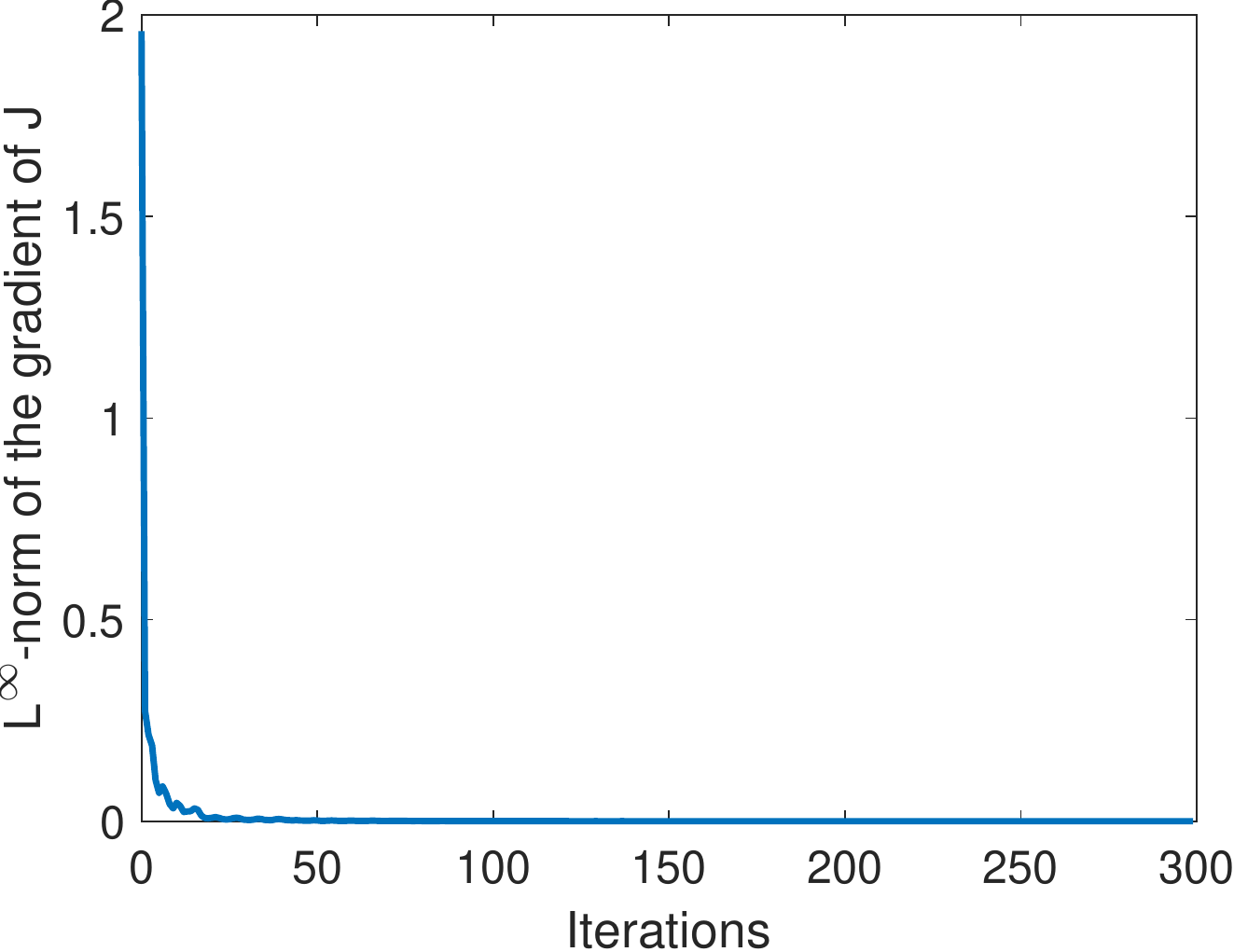}
\end{tabular}
 \caption{Simulation results for Example 2: History of the cost function $J$ and the $L^\infty$-norm of $J^\prime$ in the  $\epsilon=0.0$ and $\rho=0.0$.}
  \label{geom9}
  \end{center}
 \end{figure}
 \begin{figure}[ht!]
 \begin{center}
 \begin{tabular}{c@{\qquad} c}
 \includegraphics[width=0.45\textwidth]{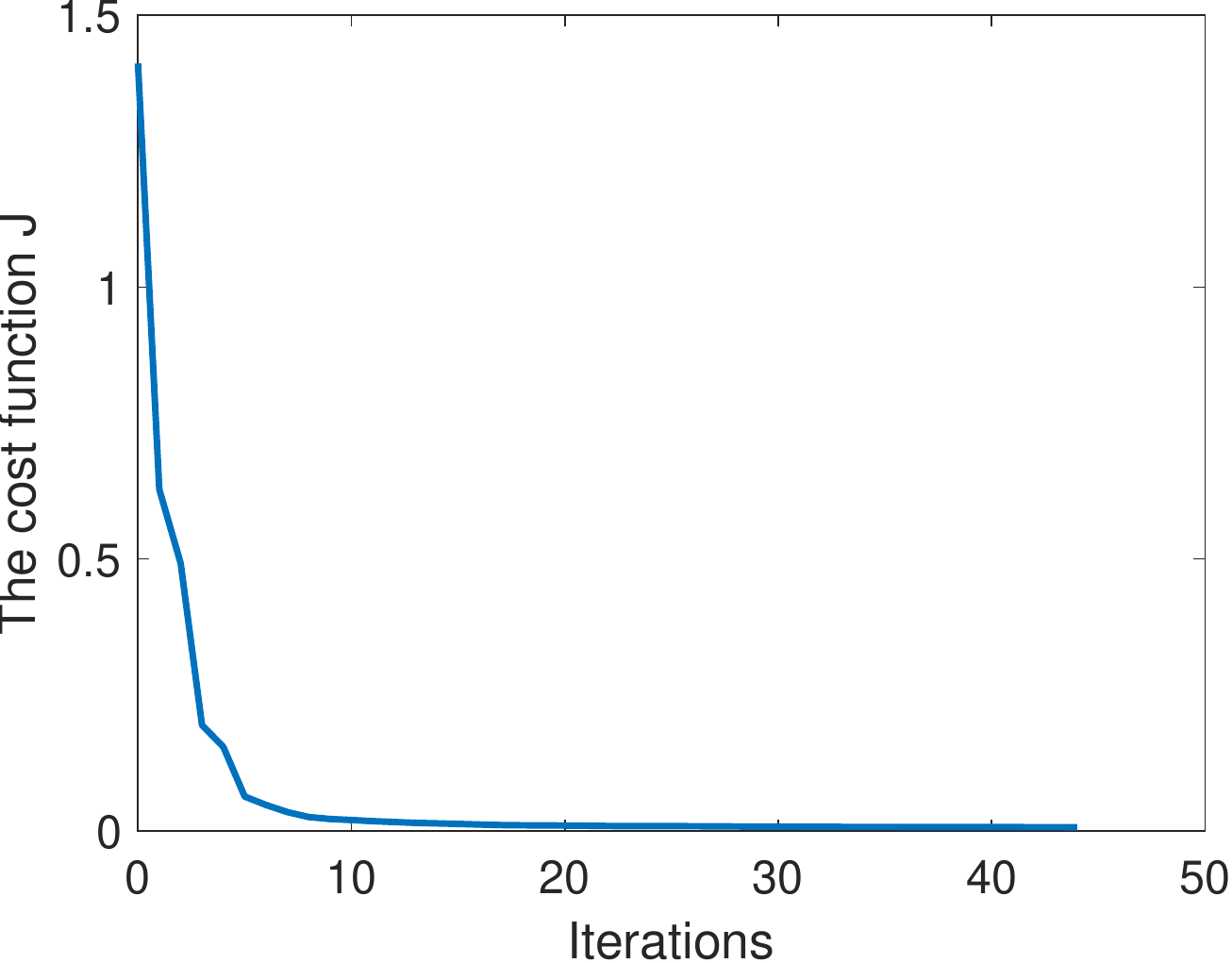} &
 \includegraphics[width=0.45\textwidth]{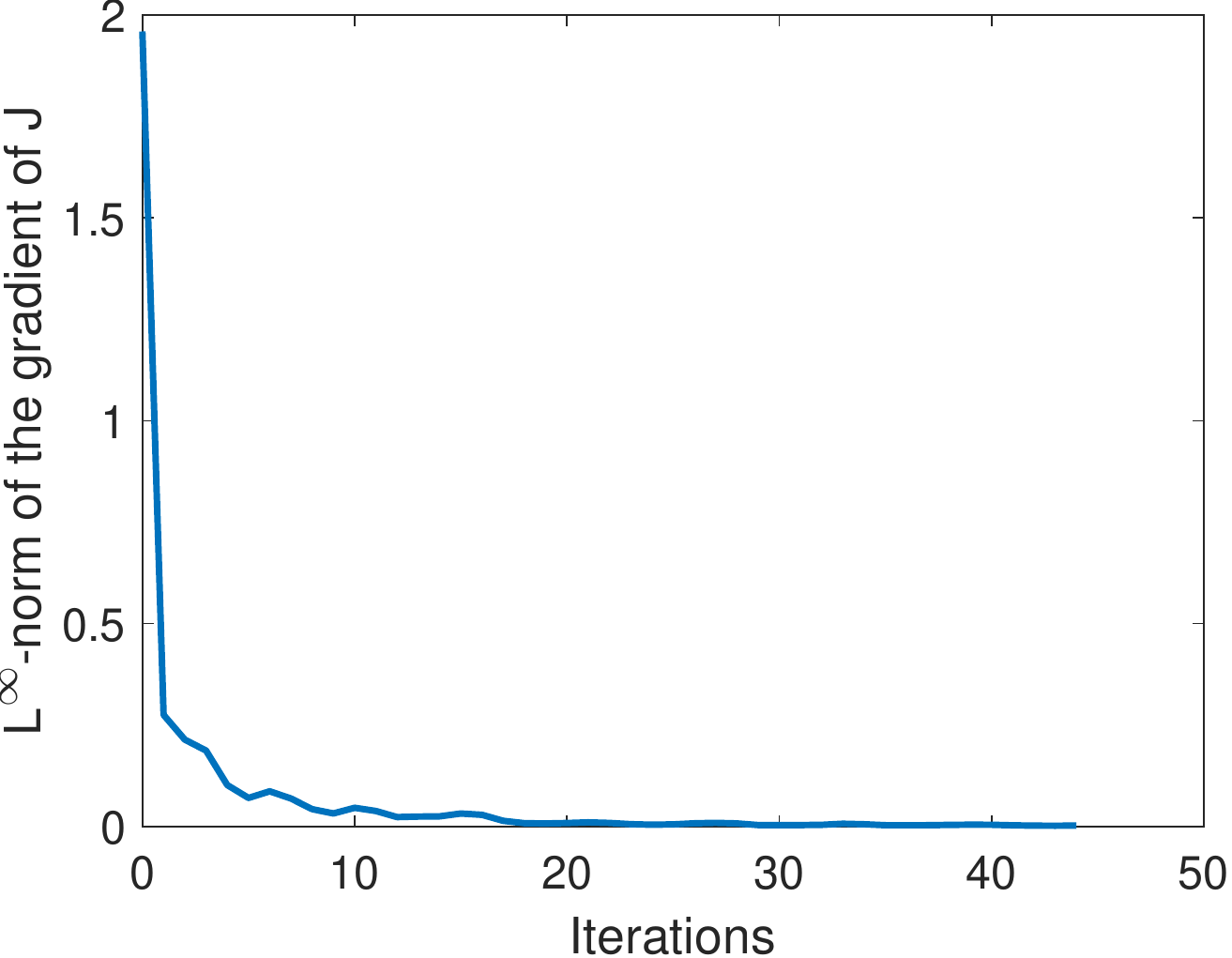}
 \end{tabular}
 \caption{Simulation results for Example 2: History of the cost function $J$ and the $L^\infty$-norm of $J^\prime$ in the  case of $\epsilon=0.03$ and $\rho=0.0001$.}
  \label{geom10}
  \end{center}
 \end{figure}

 \subsubsection{Example 3}
{\color{black} In this example, the exact Lam\'e parameters (Figure \ref{geom11}) to  be recovered  are  given by 
\[
 \mu_e=\sqrt{x_1^2+x_2^2},
 \]
 and 
\[
 \lambda_e=  \exp(-5((x_1-1/2)^2+( x_2-1/2)^2))+\exp(-5((x_1+1/2)^2+ (x_2+1/2)^2)).
\]
{As in Example 2, we reconstruct $\mu$ and $\lambda$ by minimizing the functional \eqref{J_reg} in the space of piecewise constant functions on
the FEM mesh, and use the initial guess $(\lambda_0,\mu_0)=(0.3, 0.5)$.}
The resulting reconstructions  and the absolute errors are depicted in Figures \ref{geom12}-\ref{geom13}
and Figures \ref{geom14}-\ref{geom15}. The history  of cost function $J$  and the $L^\infty$-norm of $J^\prime$ in the course of the optimization process  are depicted in Figures \ref{geom16}-\ref{geom17}. 
The parameter $\mu_e$  is well reconstructed, while  the parameter $\lambda_e$  is less well  reconstructed. However,  the
location  of  the peaks  are obtained.}
\begin{figure}[ht!]
\begin{center}
\begin{tabular}{c@{\qquad} c}
 \includegraphics[width=0.45\textwidth]{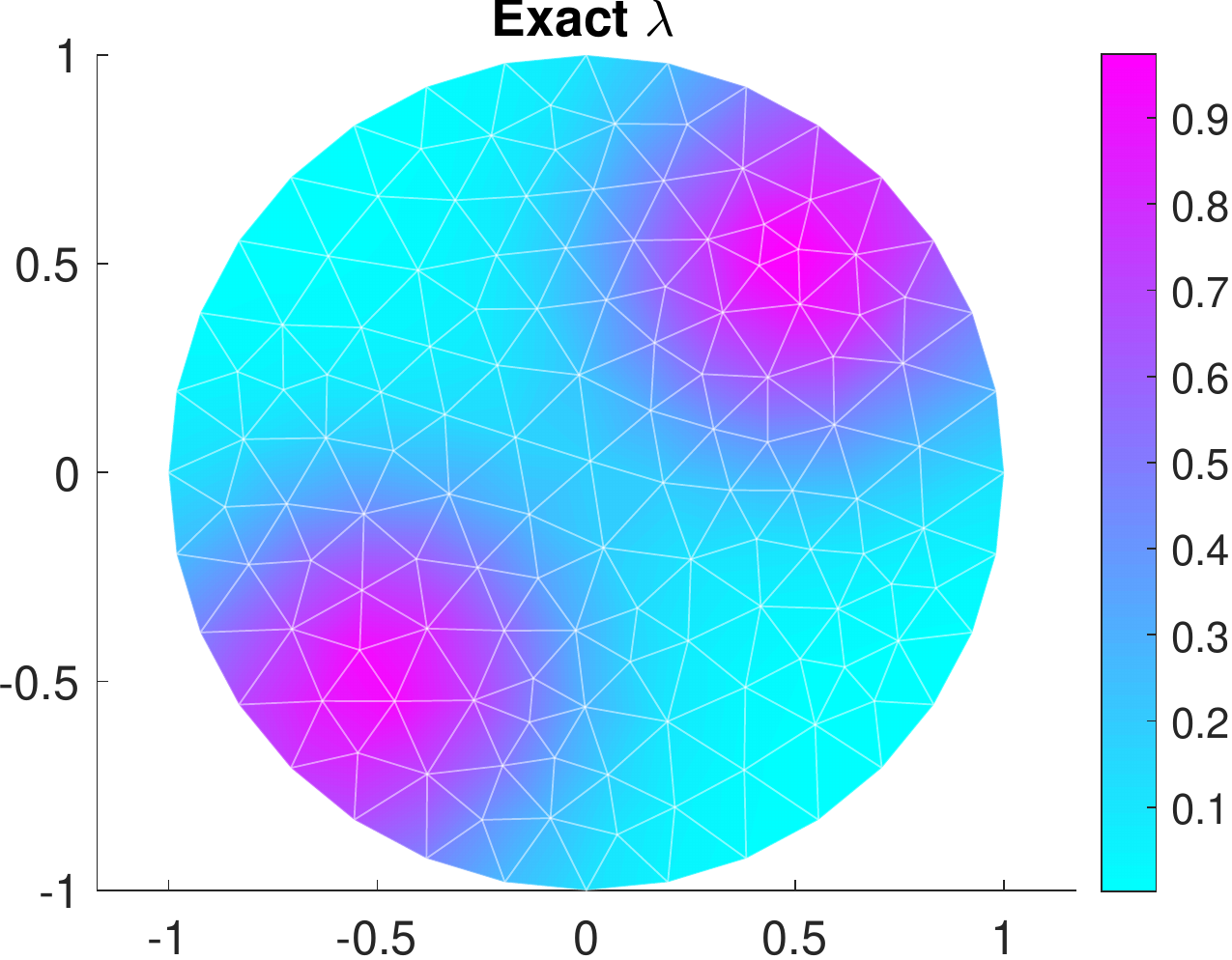} &
 \includegraphics[width=0.45\textwidth]{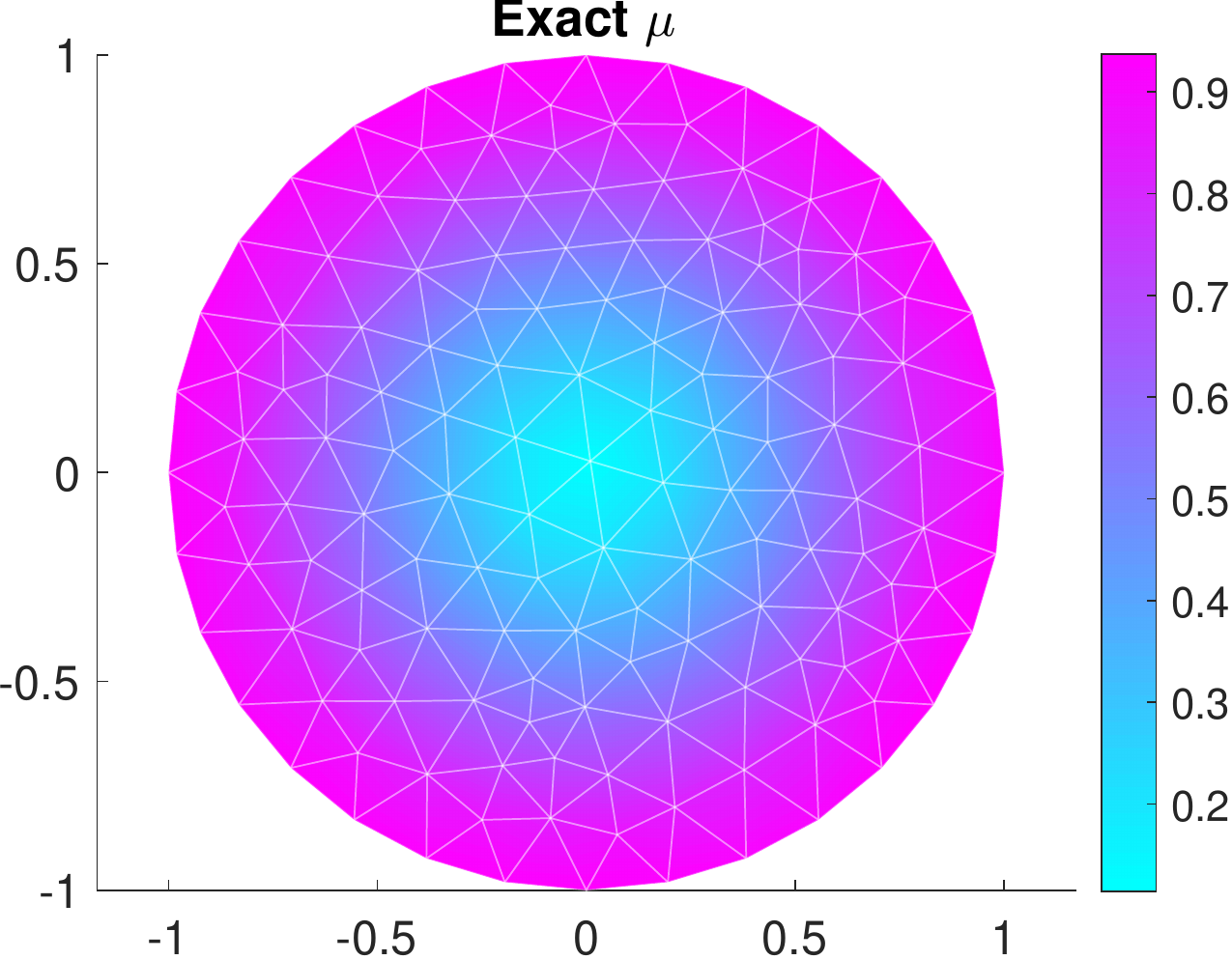}
\end{tabular}
 \caption{Simulation results for Example 3: The  exact Lam\'e  parameters.}
  \label{geom11}
  \end{center}
 \end{figure}
\begin{figure}[ht!]
\begin{center}
\begin{tabular}{c@{\qquad} c}
 \includegraphics[width=0.45\textwidth]{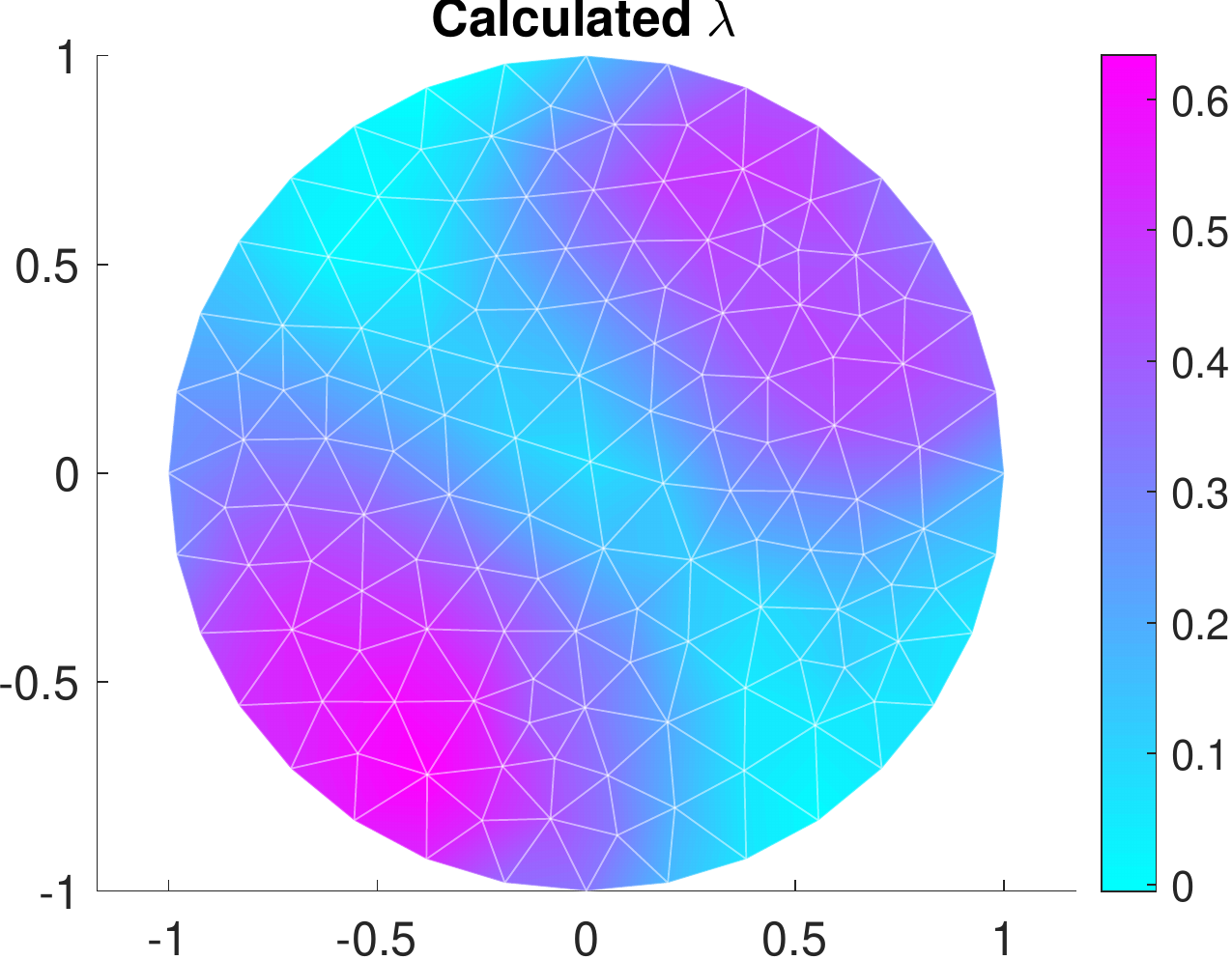} &
 \includegraphics[width=0.45\textwidth]{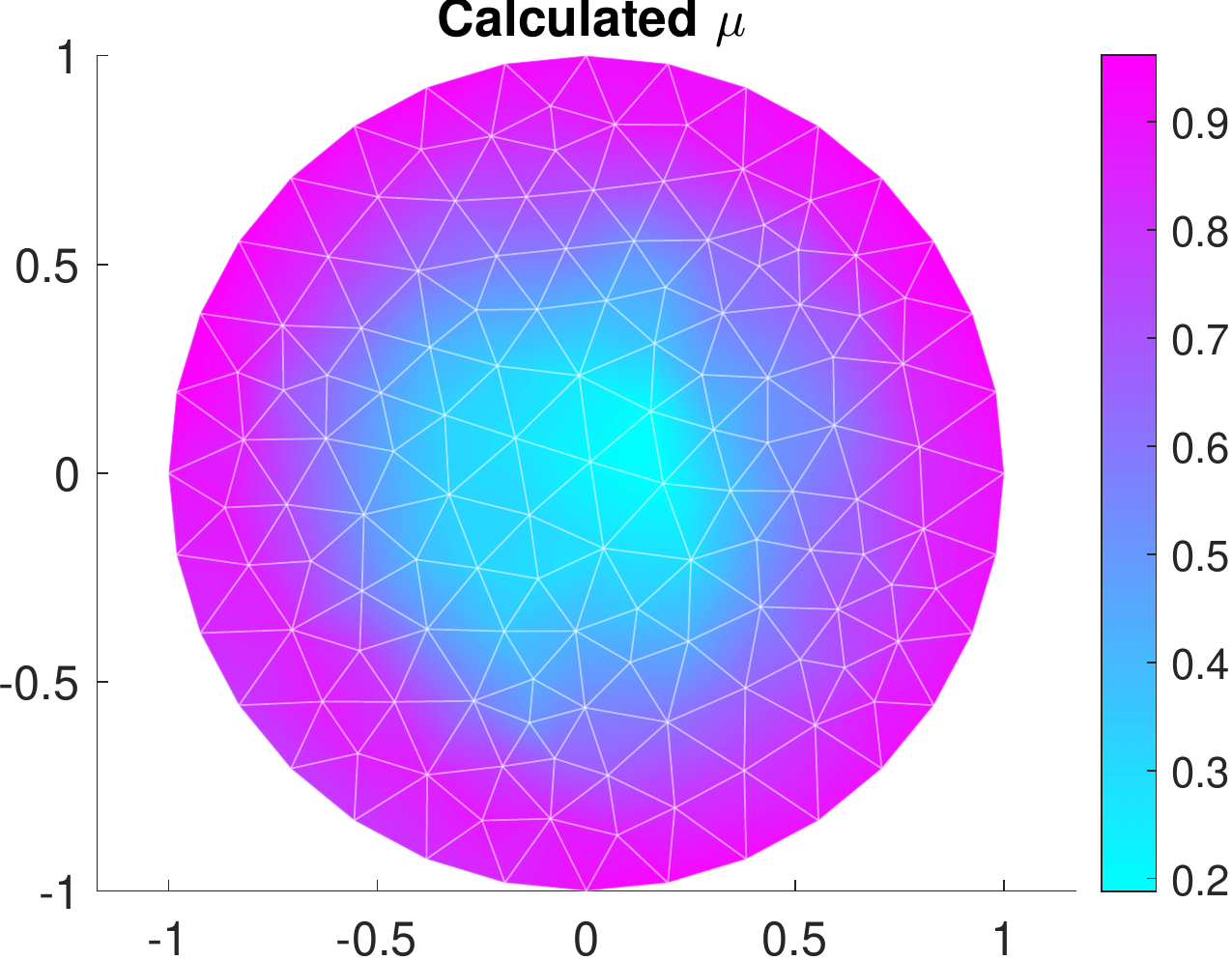}
 \end{tabular}
 \caption{Simulation results for Example 3: The computed  Lam\'e  parameters ($\epsilon=0.0$ and $\rho=0.0$).}
  \label{geom12}
  \end{center}
 \end{figure}
\begin{figure}[ht!]
\begin{center}
\begin{tabular}{c@{\qquad} c}
 \includegraphics[width=0.45\textwidth]{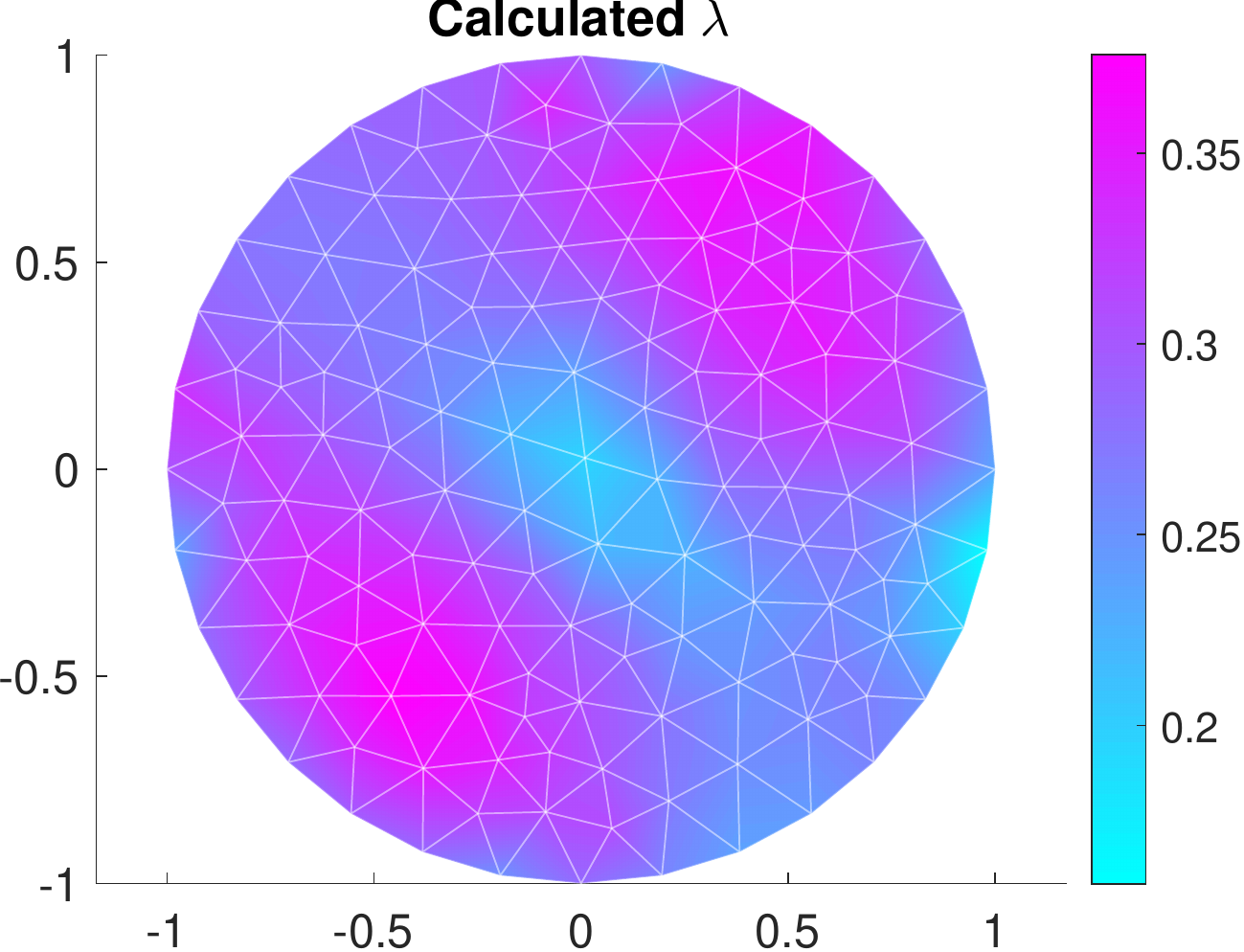} &
 \includegraphics[width=0.45\textwidth]{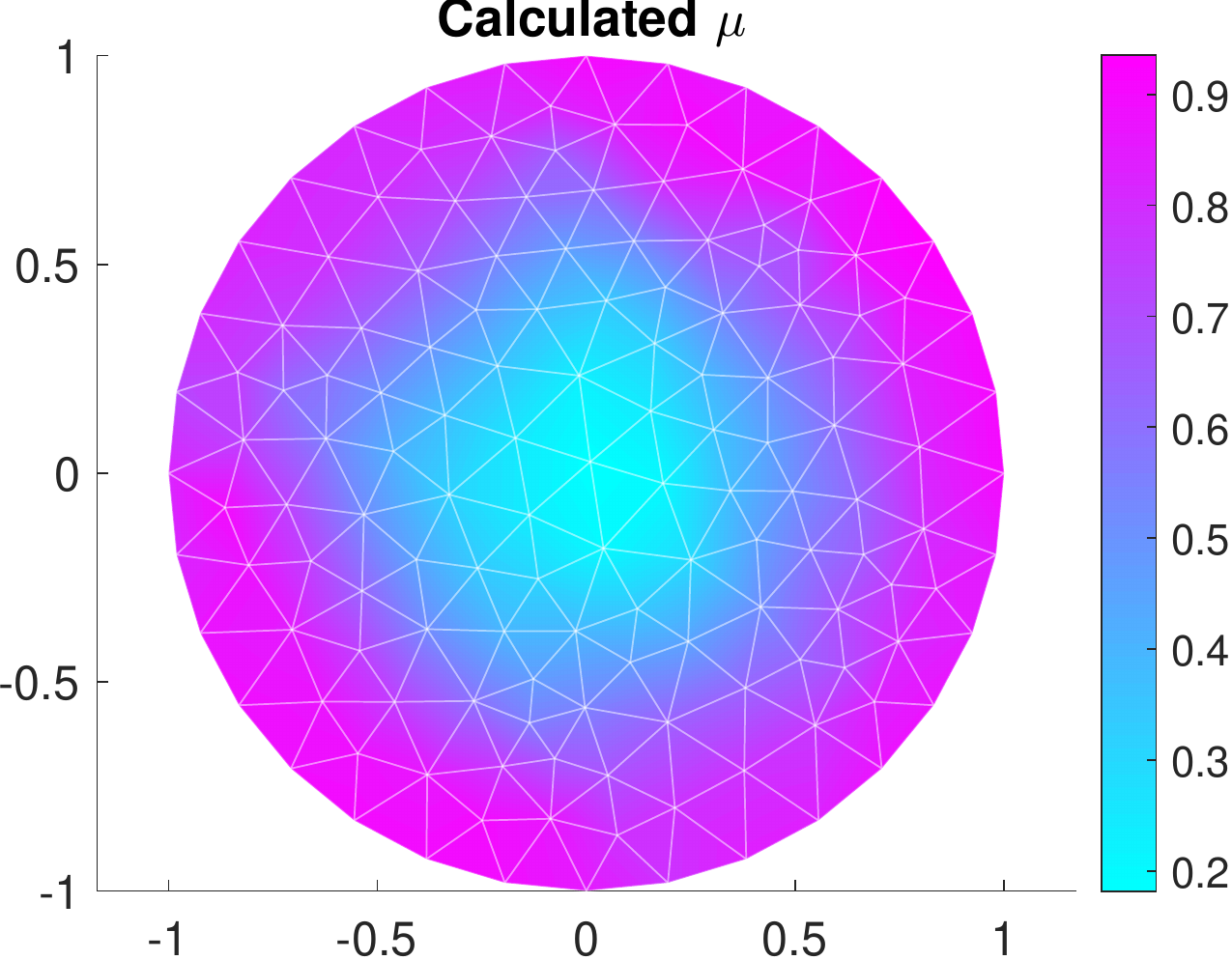}
 \end{tabular}
 \caption{Simulation results for Example 3: The computed  Lam\'e  parameters ($\epsilon=0.03$ and $\rho=0.0001$).}
  \label{geom13}
  \end{center}
 \end{figure}

 \begin{figure}[ht!]
 \begin{center}
 \begin{tabular}{c@{\qquad} c}
 \includegraphics[width=0.45\textwidth]{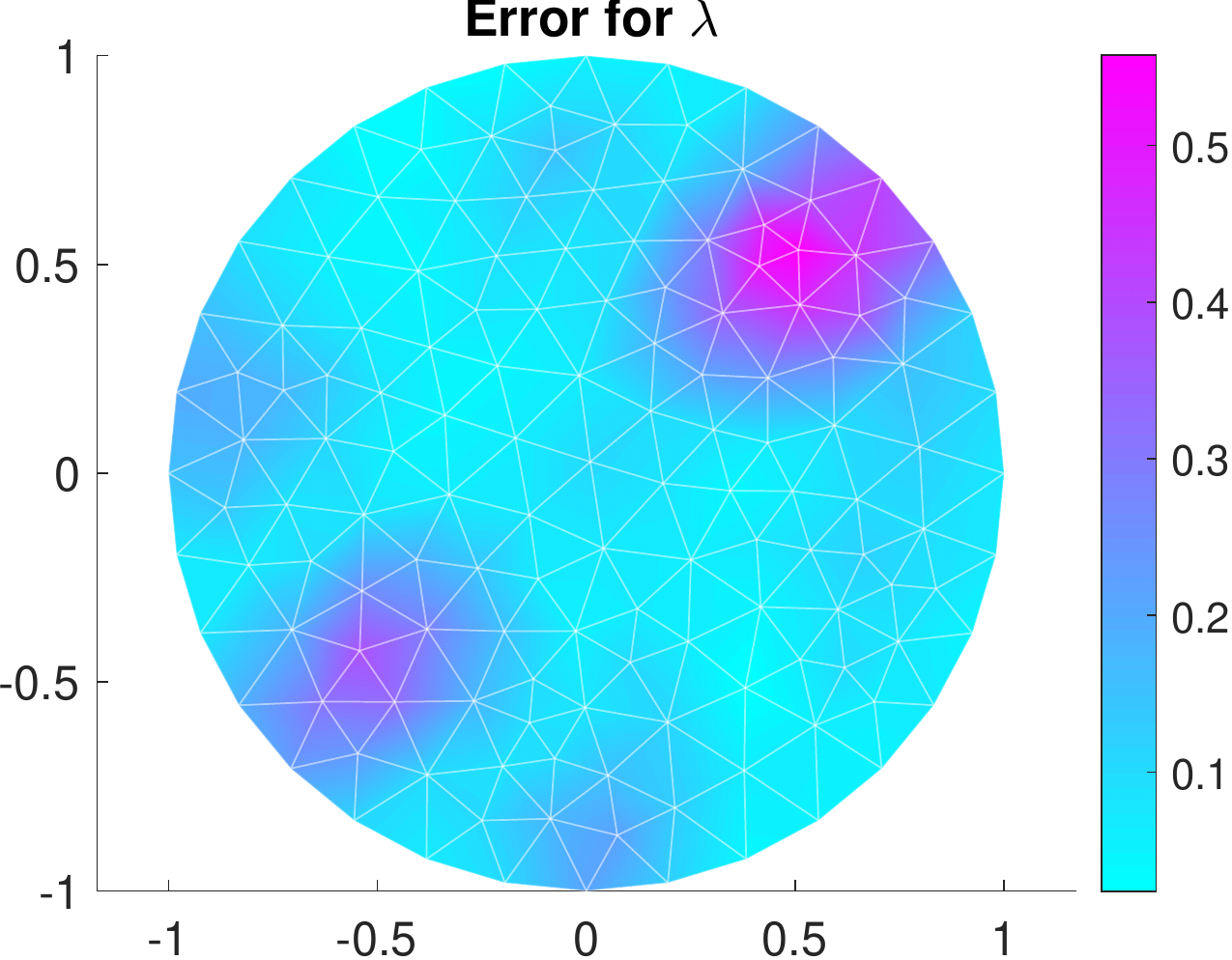} &
 \includegraphics[width=0.45\textwidth]{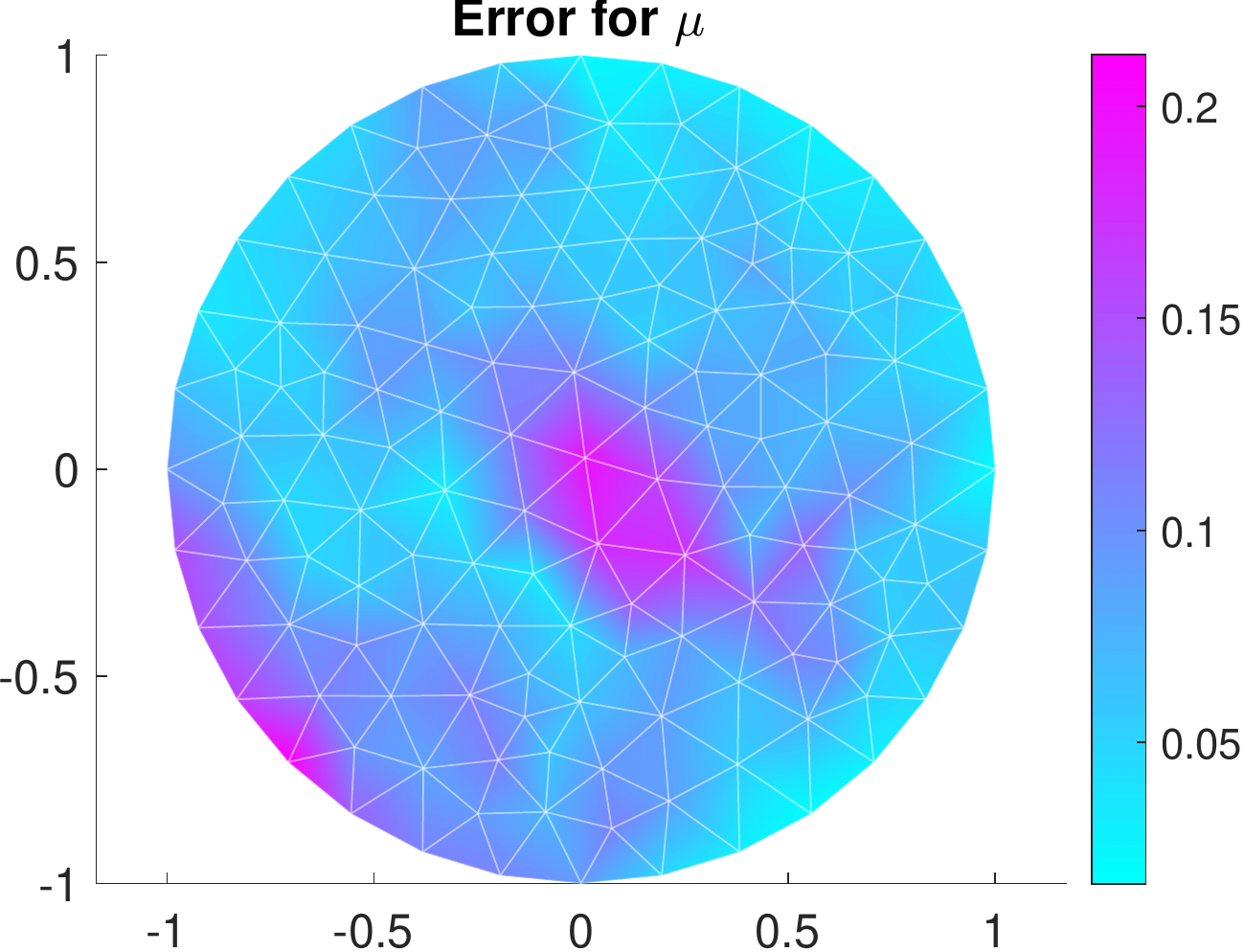}
 \end{tabular}
 \caption{Simulation results for Example 3:   Error for $\lambda$ and $\mu$ ($\epsilon=0.0$ and $\rho=0.0$).}
  \label{geom14}
  \end{center}
 \end{figure}
 \begin{figure}[ht!]
 \begin{center}
 \begin{tabular}{c@{\qquad} c}
 \includegraphics[width=0.45\textwidth]{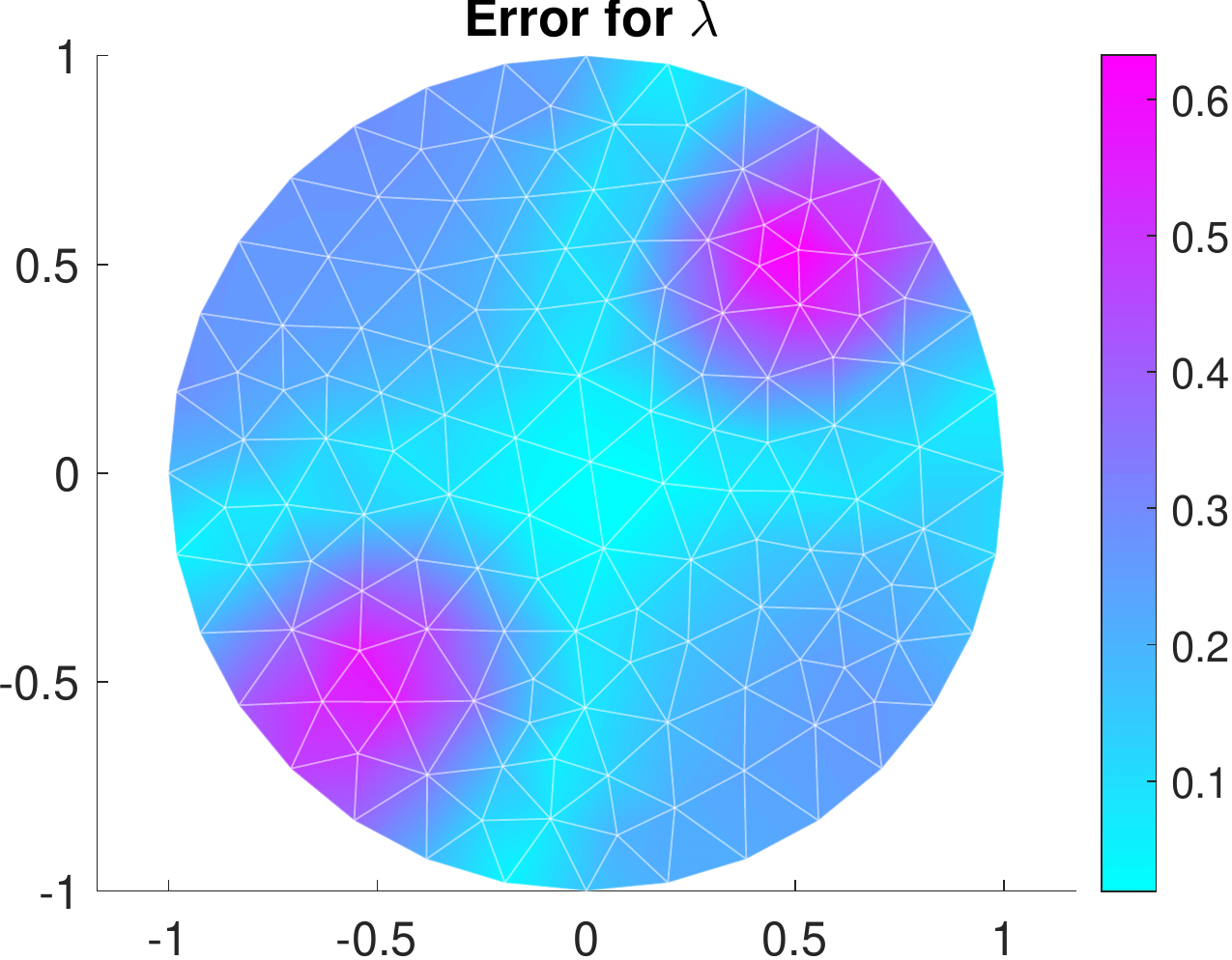} &
 \includegraphics[width=0.45\textwidth]{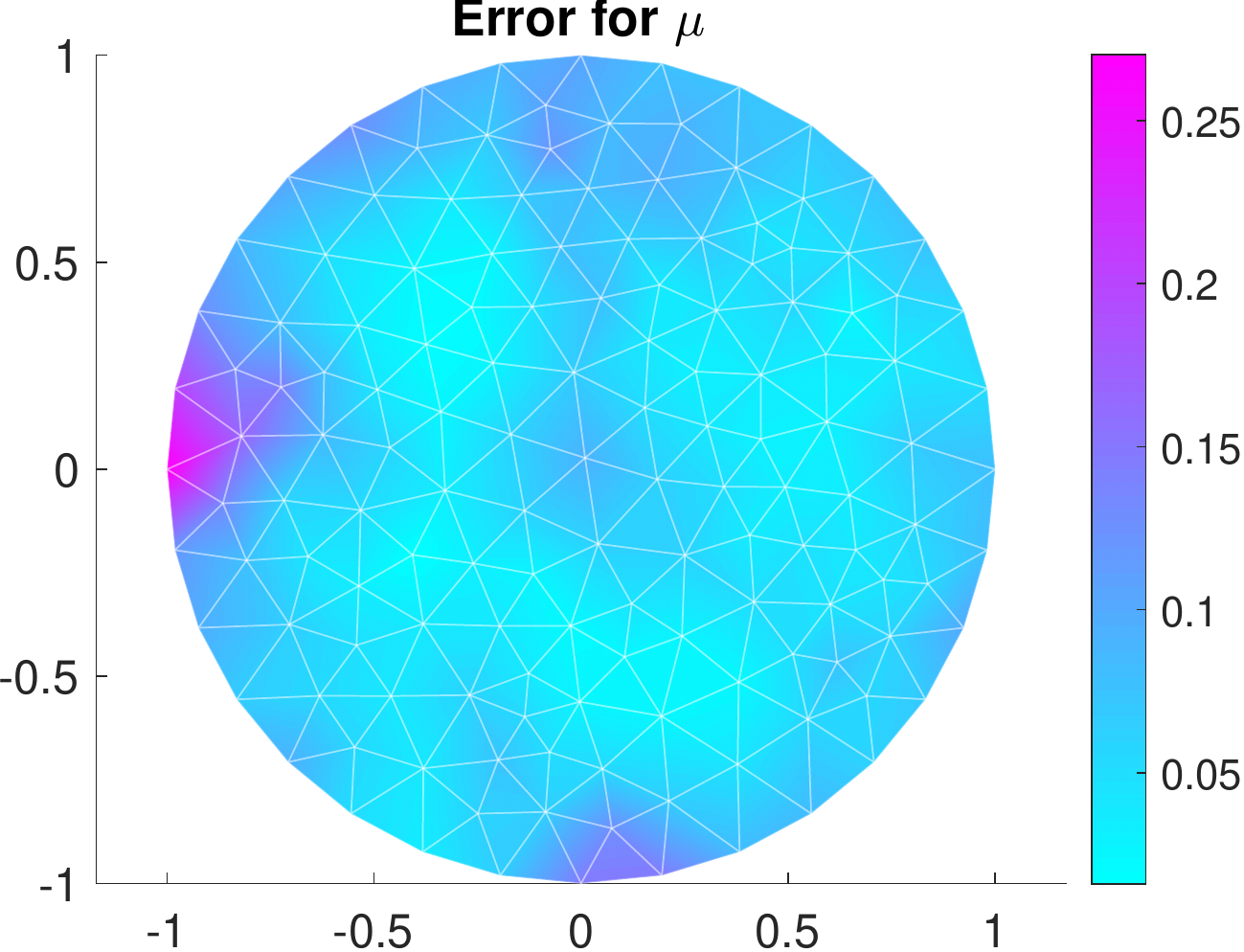}
 \end{tabular}
 \caption{Simulation results for Example 3:   Error for $\lambda$ and $\mu$ ($\epsilon=0.03$ and $\rho=0.0001$).}
  \label{geom15}
  \end{center}
 \end{figure}
 \begin{figure}[ht!]
 \begin{center}
 \begin{tabular}{c@{\qquad} c}
 \includegraphics[width=0.45\textwidth]{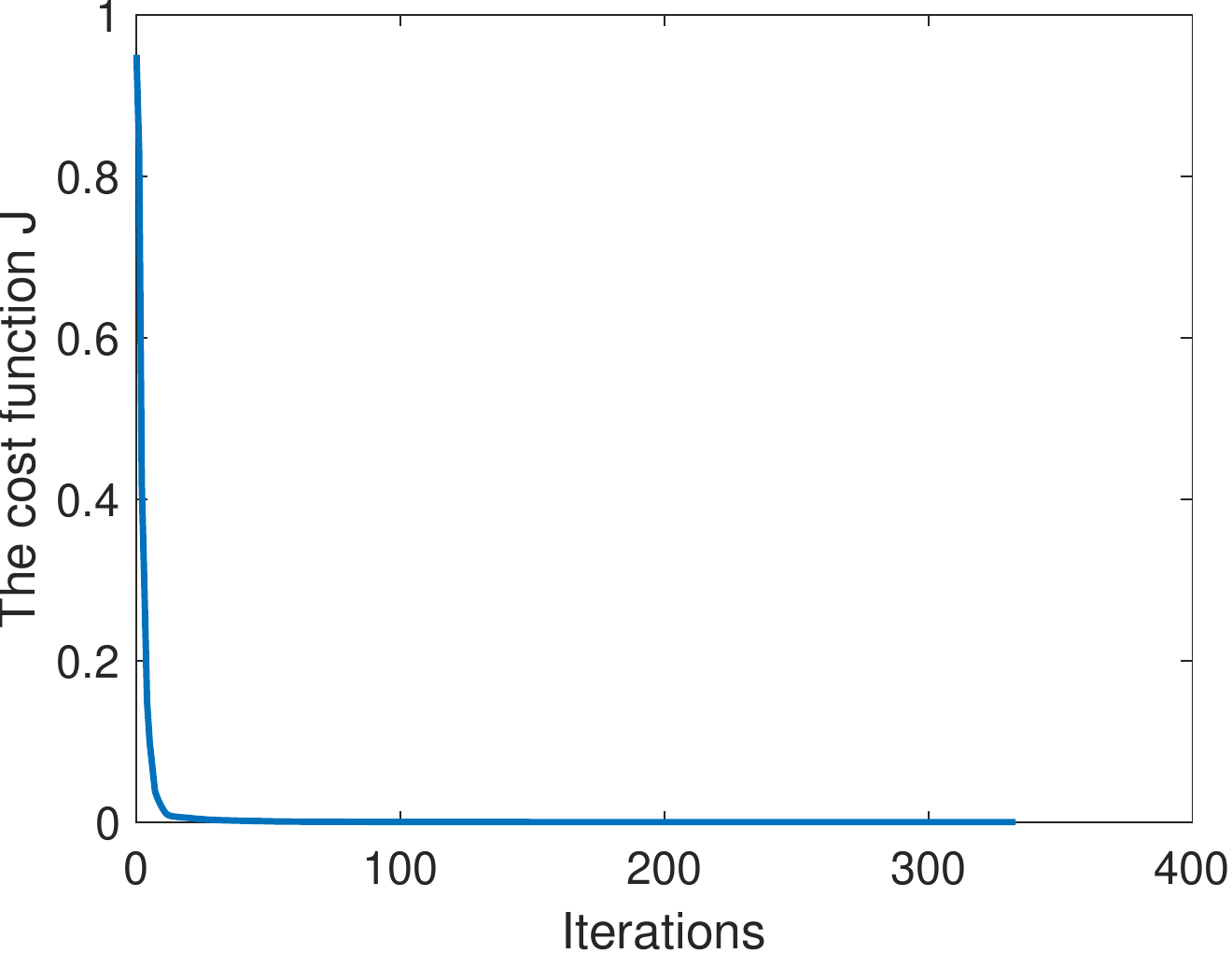} &
 \includegraphics[width=0.45\textwidth]{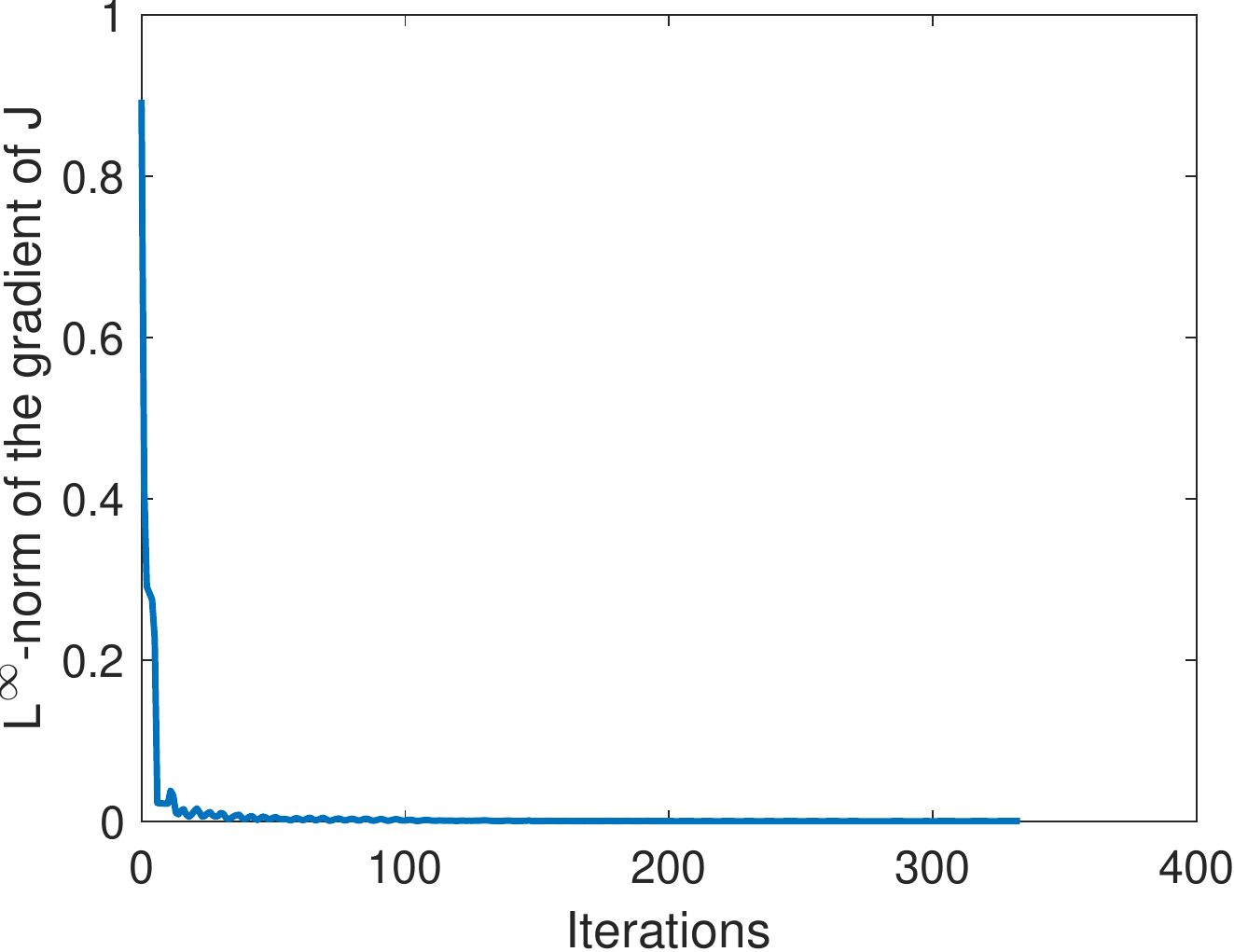}
\end{tabular} 
 \caption{Simulation results for Example 3: History of the cost function $J$ and the $L^\infty$-norm of $J^\prime$ ($\epsilon=0.0$ and $\rho=0.0$).}
  \label{geom16}
  \end{center}
 \end{figure}

  \begin{figure}[ht!]
 \begin{center}
 \begin{tabular}{c@{\qquad} c}
 \includegraphics[width=0.45\textwidth]{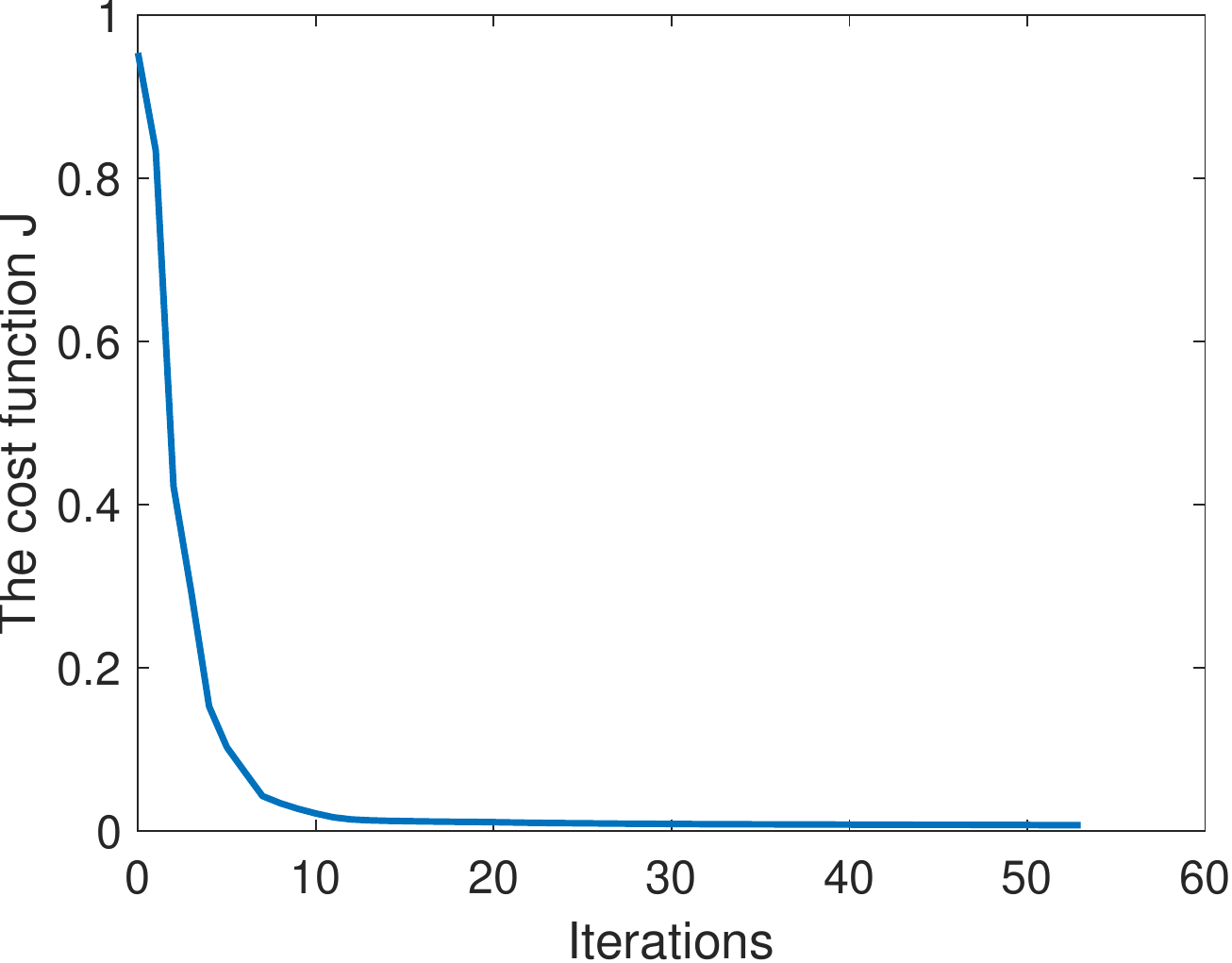} &
 \includegraphics[width=0.45\textwidth]{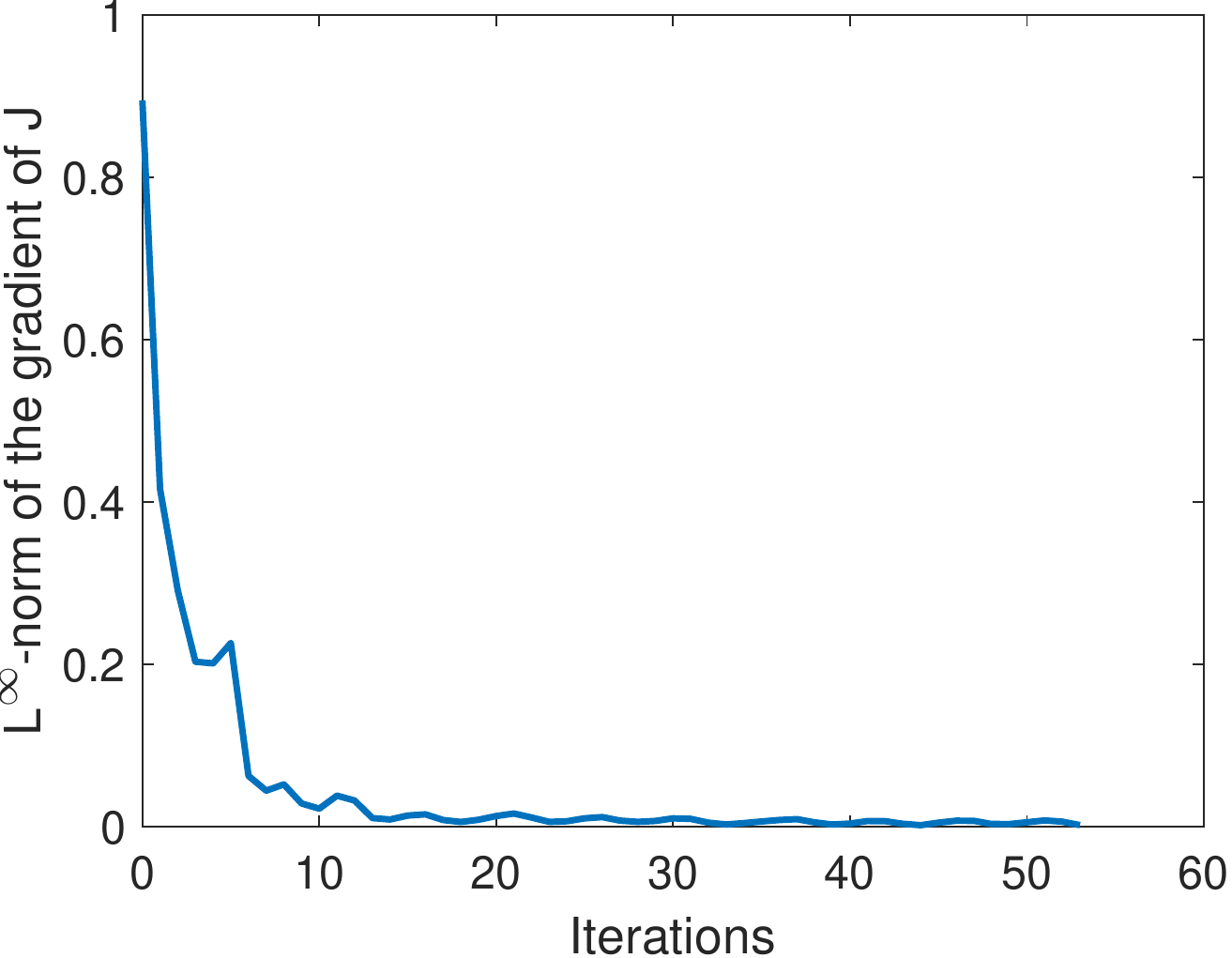}
\end{tabular}
 \caption{Simulation results for Example 3: History of the cost function $J$ and the $L^\infty$-norm of $J^\prime$ ($\epsilon=0.03$ and $\rho=0.0001$).}
  \label{geom17}
  \end{center}
 \end{figure}

 
 {\color{black}
 \begin{remark}
 The accuracy of  the numerical results presented in this paper depend on the initialization of  the proposed algorithm
 and the choice  of the regularization parameter. Meta-heuristic algorithms can be adapted to select a  good initialization  and   avoid being  trapped in a local  minima.
 The optimal  choice of the regularization parameter is based on some selection methods (such as  the discrepancy principle, the generalized cross validation, the L-curve criterion) which is  beyond the scope of this paper.
 \end{remark}
 }
 
\section{Conclusion and Outlook} 
In this paper, we dealt with the identification of Lam\'e parameters in linear elasticity. We introduced the inverse problem and the corresponding
Neumann-to-Dirichlet operator. Based on this, we analyzed the connection between the Lam\'e parameters and the Neumann-to-Dirichlet operator which led to a monotonicity
result. In order to prove a Lipschitz stability estimate {\color{black} for Lam\' e parameters which belong to a known finite subspace with a priori known bounds as well as certain regularity and monotonicity properties}, we applied the monotonicity result combined with the localized potentials. The numerical solution of the
inverse problem itself, was obtained via the minimization of a Kohn-Vogelius-type cost functional. In more detail, the reconstruction was performed via an iterative algorithm based on a 
quasi-Newton method. Finally, we presented our numerical examples and discussed them. 
{\color{black} We want to remark that the monotonicity properties of the Neumann-to-Drichlet operator as well 
as the results of the localized potentials build the basis for the monotonicty methods for linear elasticity (see \cite{monotonicity}) .}
 \\


\begin{thebibliography}{10}
\providecommand{\url}[1]{\normalfont{#1}}
\providecommand{\urlprefix}{Available from: }

\bibitem{Ammari}
Ammari~H et al. Mathematical Methods in Elasticity Imaging. Princeton University Press. 2015

\bibitem{Weglein}  
 Weglein~AB et al. Inverse scattering series and seismic exploration. Inverse Problems.
 2003; 19 \hspace{0pt}R27-R83

\bibitem{Barbone_Gokhale}  
Barbone~E, Gokhale~NH. Elastic modulus imaging: on the uniqueness and nonuniqueness of the
elastography inverse problem in two dimensions. Inverse Problems. 2004;\hspace{0pt} 20: 203-96



 \bibitem{Bonnet}
Bonnet~M, Constantinescu~A. TOPICAL REVIEW: Inverse problems in elasticity. 
Inverse Problems. 2015;\hspace{0pt 21(2), pp. R1-R50}.


\bibitem{hong}
Dai, Yu-Hong. Convergence properties of the BFGS algoritm. SIAM Journal on Optimization, 2002, vol. 13, no 3, p. 693-701.

\bibitem{ikehata1990inversion}
Ikehata~M. Inversion formulas for the linearized problem for an inverse
  boundary value problem in elastic prospection. SIAM Journal on Applied
  Mathematics. 1990;\hspace{0pt}50(6):1635--1644.
  
\bibitem{akamatsu1991identification}
Akamatsu~M, Nakamura~G, Steinberg~S. Identification of {L}am{\'e} coefficients
  from boundary observations. Inverse Problems. 1991;\hspace{0pt}7(3):335.

\bibitem{nakamura1993identification}
Nakamura~G, Uhlmann~G. Identification of {L}am{\'e} parameters by boundary
  measurements. American Journal of Mathematics. 1993;\hspace{0pt}1161--1187.

\bibitem{imanuvilov2011reconstruction}
Imanuvilov~OY, Yamamoto~M. On reconstruction of {L}am{\'e} coefficients from
  partial {C}auchy data. Journal of Inverse and Ill-posed Problems.
  2011;\hspace{0pt}19(6):881--891.
  
 \bibitem{Lechleiter} 
Lechleiter, A., and Rieder, A. Newton regularizations for impedance tomography:
convergence by local injectivity. Inverse Problems 24, 6 (2008), 065009, 18.  

\bibitem{lin2017boundary}
Lin~YH, Nakamura~G. Boundary determination of the {L}am{\'e} moduli for the
  isotropic elasticity system. Inverse Problems.
  2017;\hspace{0pt}33(12):125004.

\bibitem{Imanuvilov2015}
Imanuvilov~OY, Yamamoto~M. Global uniqueness in inverse boundary value problems for the Navier-Stokes equations and Lam\'e system in two dimensions. 
Inverse Problems. 2015;\hspace{0pt 31(3), 46pp}.
 
  
\bibitem{nakamura1995inverse}
Nakamura~G, Uhlmann~G. Inverse problems at the boundary for an elastic medium.
  SIAM journal on mathematical analysis. 1995;\hspace{0pt}26(2):263--279.

\bibitem{nakamura2003global}
Nakamura~G, Uhlmann~G. Global uniqueness for an inverse boundary value problem
  arising in elasticity. Inventiones mathematicae.
  2003;\hspace{0pt}152(1):205--207.

\bibitem{eskin2002inverse}
Eskin~G, Ralston~J. On the inverse boundary value problem for linear isotropic
  elasticity. Inverse Problems. 2002;\hspace{0pt}18(3):907.

\bibitem{beretta2014lipschitz}
Beretta~E, Francini~E, Morassi~A, et~al. Lipschitz continuous dependence of
  piecewise constant {L}am{\'e} coefficients from boundary data: the case of
  non-flat interfaces. Inverse Problems. 2014;\hspace{0pt}30(12):125005.

\bibitem{beretta2014uniqueness}
Beretta~E, Francini~E, Vessella~S. Uniqueness and {L}ipschitz stability for the
  identification of {L}am{\'e} parameters from boundary measurements. Inverse
  Problems \& Imaging. 2014;\hspace{0pt}8(3):611--644.

\bibitem{nakamura1999layer}
Nakamura~G, Tanuma~K, Uhlmann~G. Layer stripping for a transversely isotropic
  elastic medium. SIAM Journal on Applied Mathematics.
  1999;\hspace{0pt}59(5):1879--1891.

\bibitem{carstea2018uniqueness}
C$\hat{\mathrm a}$rstea ~CI, Honda~N, Nakamura~G. Uniqueness in the inverse boundary value
  problem for piecewise homogeneous anisotropic elasticity. SIAM Journal on
  Mathematical Analysis. 2018;\hspace{0pt}50(3):3291--3302.

\bibitem{isakov2007inverse}
Isakov~V, Wang~JN, Yamamoto~M. An inverse problem for a dynamical {L}am{\'e}
  system with residual stress. SIAM Journal on Mathematical Analysis.
  2007;\hspace{0pt}39(4):1328--1343.

\bibitem{harrach2016enhancing}
Harrach~B, Minh~MN. Enhancing residual-based techniques with shape
  reconstruction features in electrical impedance tomography. Inverse Problems.
  2016;\hspace{0pt}32(12):125002.

\bibitem{harrach2015resolution}
Harrach~B, Ullrich~M. Resolution guarantees in electrical impedance tomography.
  IEEE transactions on medical imaging. 2015;\hspace{0pt}34(7):1513--1521.

\bibitem{harrach2015combining}
Harrach~B, Lee~E, Ullrich~M. Combining frequency-difference and ultrasound
  modulated electrical impedance tomography. Inverse Problems.
  2015;\hspace{0pt}31(9):095003.

\bibitem{harrach2018monotonicity}
Harrach~B, Minh~MN. Monotonicity-based regularization for phantom experiment
  data in electrical impedance tomography. In: New trends in parameter
  identification for mathematical models. Springer; 2018. p. 107--120.

\bibitem{barth2017detecting}
Barth~A, Harrach~B, Hyv{\"o}nen~N, et~al. Detecting stochastic inclusions in
  electrical impedance tomography. Inverse Problems.
  2017;\hspace{0pt}33(11):115012.

\bibitem{harrach2018helmholtz}
Harrach~B, Pohjola~V, Salo~M. Monotonicity and local uniqueness for the
  {H}elmholtz equation. arXiv preprint arXiv:170908756. 2018;\hspace{0pt}.
  
  \bibitem{hoop}
 de Hoop MV, Qiu L, Scherzer O. Local analysis of inverse problems: H\"older stability and iterative reconstruction. 
 Inverse Problems, 2012, vol. 28, no 4, p. 045001. 

\bibitem{griesmaier2018monotonicity}
Griesmaier~R, Harrach~B. Monotonicity in inverse medium scattering on unbounded
  domains. SIAM Journal on Applied Mathematics.
  2018;\hspace{0pt}78(5):2533--2557.

\bibitem{harrach2018localizing}
Harrach~B, Lin~YH, Liu~H. On localizing and concentrating electromagnetic
  fields. SIAM Journal on Applied Mathematics.
  2018;\hspace{0pt}78(5):2558--2574.

\bibitem{harrach2012simultaneous}
Harrach~B. Simultaneous determination of the diffusion and absorption
  coefficient from boundary data. Inverse Problems and Imaging.
  2012;\hspace{0pt}6(4):663--679.

\bibitem{harrach2018fractional}
Harrach~B, Lin~YH. Monotonicity-based inversion of the fractional
  {S}chr\"odinger equation {I}. positive potentials. arXiv preprint
  arXiv:171105641. 2018;\hspace{0pt}.

\bibitem{harrach2018fractional2}
Harrach~B, Lin~YH. Monotonicity-based inversion of the fractional
  {S}chr\"odinger equation {II}. general potentials and stability. arXiv
  preprint arXiv:190308771. 2018;\hspace{0pt}.

\bibitem{harrach2010exact}
Harrach~B, Seo~JK. Exact shape-reconstruction by one-step linearization in
  electrical impedance tomography. SIAM Journal on Mathematical Analysis.
  2010;\hspace{0pt}42(4):1505--1518.

\bibitem{gebauer2008localized}
Gebauer~B. Localized potentials in electrical impedance tomography. Inverse Problems and Imaging.
 2008;\hspace{0pt}2(2):251--269.

\bibitem{harrach2009uniqueness}
Harrach~B. On uniqueness in diffuse optical tomography. Inverse Problems.
  2009;\hspace{0pt}25(5):055010.

\bibitem{harrach2013monotonicity}
Harrach~B, Ullrich~M. Monotonicity-based shape reconstruction in electrical
  impedance tomography. SIAM Journal on Mathematical Analysis.
  2013;\hspace{0pt}45(6):3382--3403.

\bibitem{harrach2017local}
Harrach~B, Ullrich~M. Local uniqueness for an inverse boundary value problem
  with partial data. Proceedings of the American Mathematical Society.
  2017;\hspace{0pt}145(3):1087--1095.

\bibitem{brander2018monotonicity}
Brander~T, Harrach~B, Kar~M, et~al. Monotonicity and enclosure methods for the
  {$p$}-{L}aplace equation. SIAM Journal on Applied Mathematics.
  2018;\hspace{0pt}78(2):742--758.
  \urlprefix\url{https://doi.org/10.1137/17M1128599}.

\bibitem{murea2005bfgs}
Murea~C. The {BFGS} algorithm for a nonlinear least squares problem arising
  from blood flow in arteries. Computers \& Mathematics with Applications.
  2005;\hspace{0pt}49(2-3):171--186.

\bibitem{bellassoued2008inverse}
Bellassoued~M, Imanuvilov~O, Yamamoto~M. Inverse problem of determining the
  density and two {L}am{\'e} coefficients by boundary data. SIAM Journal on
  Mathematical Analysis. 2008;\hspace{0pt}40(1):238--265.

\bibitem{bellassoued2007lipschitz}
Bellassoued~M, Yamamoto~M. Lipschitz stability in determining density and two
  {L}am{\'e} coefficients. Journal of mathematical analysis and applications.
  2007;\hspace{0pt}329(2):1240--1259.

\bibitem{harrach2019uniqueness}
Harrach~B. Uniqueness and {L}ipschitz stability in electrical impedance
  tomography with finitely many electrodes. Inverse Problems.
  2019;\hspace{0pt}35(2):024005.

\bibitem{harrach2018global}
Harrach~B, Meftahi~H. Global uniqueness and {L}ipschitz-stability for the
  inverse {R}obin transmission problem. SIAM Journal on Applied Mathematics.
  2019;\hspace{0pt}79(2):525--550.

\bibitem{harrach2020uniqueness}
Harrach~B. Uniqueness, stability and global convergence for a discrete inverse elliptic {R}obin transmission problem.
arXiv preprint arXiv:1907.02759. 2020\hspace{0pt}.


\bibitem{seo2018learning}
Seo~JK, Kim~KC, Jargal~A, et~al. A learning-based method for solving ill-posed
  nonlinear inverse problems: a simulation study of lung {EIT}. arXiv preprint
  arXiv:181010112. 2018;\hspace{0pt}.


\bibitem{Ciarlet}
Ciarlet~PG. The finite element method for elliptic problems. North Holland. 1978


\bibitem{lin2011quantitative}
Lin~CL, Nakamura~G, Uhlmann~G, et~al. Quantitative strong unique continuation
  for the {L}am\'e system with less regular coefficients. Methods and
  Applications of Analysis. 2011;\hspace{0pt}18(1):85--92.

\bibitem{Gosh}
Gosh~A, Gosh~T. Unique continuation for a non bi-Laplacian fourth order elliptic operator.
  arXiv preprint arXiv:1908.05882. 2019;\hspace{0pt}.

  
\bibitem{Ciarlet_book}
Ciarlet~PG. Three-Dimensional Elasticity. North Holland. 1988

\bibitem{DZ}
Delfour~MC, Zol{\'e}sio~JP. Shapes and geometries. 2nd ed. (Advances in Design
  and Control; Vol.~22). Philadelphia, PA: Society for Industrial and Applied
  Mathematics (SIAM); 2011. Metrics, analysis, differential calculus, and
  optimization; \urlprefix\url{http://dx.doi.org/10.1137/1.9780898719826}.

\bibitem{ET}
Ekeland~I, Temam~R. Analyse convexe et probl\`emes variationnels. Dunod; 1974.
  Collection {\'E}tudes Math{\'e}matiques.

  
\bibitem{CS}
Correa~R, Seeger~A. Directional derivative of a minimax function. Nonlinear
  Anal. 1985;\hspace{0pt}9(1):13--22.
  \urlprefix\url{http://dx.doi.org/10.1016/0362-546X(85)90049-5}.
  
\bibitem{monotonicity}
Eberle~S, Harrach~B. Shape Reconstruction in Linear Elasticity: Standard and Linearized Monotonicity Method.
arXiv preprint arXiv:2003.02598. 2020\hspace{0pt}.


\end{thebibliography}
\end{document}